\newcommand{\poe}{\pi_{1}^{\operatorname{\acute{e}t}}}
\newcommand{\caF}{\mathcal{F}}
\newcommand{\ab}{\operatorname{ab}}
\newcommand{\Isom}{\operatorname{Isom}}
\newcommand{\Out}{\operatorname{Out}}
\newcommand{\Aut}{\operatorname{Aut}}
\newcommand{\caO}{\mathcal{O}}
\newcommand{\caN}{\mathcal{N}}
\newcommand{\caD}{\mathcal{D}}
\newcommand{\caS}{\mathcal{S}}
\newcommand{\caM}{\mathcal{M}}
\newcommand{\caL}{\mathcal{L}}
\newcommand{\caT}{\mathcal{T}}
\newcommand{\caP}{\mathcal{P}}
\newcommand{\caA}{\mathcal{A}}
\newcommand{\caH}{\mathcal{H}}
\newcommand{\caV}{\mathcal{V}}
\newcommand{\caX}{\mathcal{X}}
\newcommand{\caE}{\mathcal{E}}
\newcommand{\caJ}{\mathcal{J}}
\newcommand{\caDiv}{\operatorname{Div}}
\newcommand{\Gal}{\operatorname{Gal}}
\newcommand{\Spec}{\operatorname{Spec}}
\newcommand{\ProjData}{\mathbf{ProjData}}
\newcommand{\Bir}{\mathfrak{Bir}}
\newcommand{\GBir}{\mathfrak{GBir}}
\newcommand{\trdeg}{\operatorname{tr.\,deg.}}
\newcommand{\NS}{\operatorname{NS}}
\newcommand{\Isomext}{\operatorname{Isom}^{{\Out}}}
\newcommand{\anal}{\operatorname{an}}
\renewcommand{\top}{\operatorname{top}}
\newcommand{\Bl}{\operatorname{Bl}}
\newcommand{\caGeom}{\mathfrak{Geom}}
\newcommand{\geom}{\operatorname{geom}}
\newcommand{\Sym}{\operatorname{Sym}}
\newcommand{\family}{\mathbf{Fam}}
\newcommand{\supp}{\mathbf{supp}}
\newcommand{\num}{\operatorname{num}}
\newcommand{\lin}{\operatorname{lin}}
\newcommand{\Proj}{\operatorname{Proj}}
\newcommand{\alg}{\operatorname{alg}}
\newcommand{\cont}{\operatorname{cont}}
\newcommand{\Par}{\operatorname{Par}}
\newcommand{\Lim}{\operatorname{Lim}}
\newcommand{\rk}{\operatorname{rk}}
\newcommand{\im}{\operatorname{im}}
\newcommand{\eqdef}{=_{\operatorname{def}}}
\newtheorem{theorem}{Theorem}
\newaliascnt{lemma}{theorem}  
\newtheorem{lemma}[lemma]{Lemma}  
 \newaliascnt{proposition}{theorem}  
\newtheorem{proposition}[proposition]{Proposition}  
 \newaliascnt{definition}{theorem}  
\newtheorem{definition}[definition]{Definition}  
 \newaliascnt{corollary}{theorem}  
\newtheorem{corollary}[corollary]{Corollary}  
 \newaliascnt{example}{theorem}  
\newtheorem{example}[example]{Example}  
 \newaliascnt{conjecture}{theorem}  
 \newaliascnt{problem}{theorem}  
\newaliascnt{question}{theorem}
\newaliascnt{formula}{theorem}
\newenvironment{formula}{\begin{equation}}{\end{equation}}
\renewcommand{\Im}{\operatorname{Im}}
\title{Anabelian Intersection Theory I: The Conjecture of Bogomolov-Pop and Applications}
\author{Aaron Michael Silberstein\footnote{University of Pennsylvania, Philadelphia, PA 19104.  Email: \texttt{aaronsil@math.upenn.edu}}}
\begin{document}
\maketitle
%\section{}
%\subsection{}
\section{Statement of Results}
A.~Grothendieck first coined the term ``anabelian geometry'' in a letter to G.~Faltings \cite{GrothLTF} as a response to Faltings' proof of the Mordell conjecture and in his celebrated \textit{Esquisse d'un Programme} \cite{GrothEsq}.  The ``yoga'' of Grothendieck's anabelian geometry is that if the \'etale fundamental group $\poe(X, \overline{x})$ of a variety $X$ at a geometric point $\overline{x}$ is rich enough, then it should encode much of the information about $X$ as a variety; such varieties $X$ are called \textbf{anabelian in the sense of Grothendieck}, and have the property that two anabelian varieties have isomorphic \'etale fundamental groups if and only if they are isomorphic; and that the isomorphisms between their \'etale fundamental groups are precisely the isomorphisms between the varieties.  Grothendieck did not specify how much extra information should be encoded, and there is currently not a consensus on the answer.  An \textbf{anabelian theorem (or conjecture)} is a theorem (or conjecture) which asserts that a class of varieties are anabelian.

Grothendieck wrote in \cite{GrothLTF} about a number of anabelian conjectures, one regarding the moduli of curves, defined over global fields (which is still open); one regarding hyperbolic curves, defined over global fields; and a birational anabelian conjecture, which asserts that $\Spec$ of finitely-generated, infinite fields are anabelian (in this case, we say the fields themselves are anabelian).  The anabelian conjecture for hyperbolic curves was proved in the 1990's by A.~Tamagawa and S.~Mochizuki (\cite{TamagawaGrothAffCurves}, \cite{Moc}).  The birational anabelian conjecture for finitely-generated, infinite fields is a vast generalization of the pioneering Neukirch-Ikeda-Uchida theorem for global fields (\cite{NeukVals}, \cite{Uch}, \cite{Ikeda}, \cite{NeukAnab}), and is now a theorem due to F.~Pop \cite{PopAnab}.

Grothendieck remarked that ``the reason for [anabelian phenomena] seems\dots to lie in the extraordinary \textit{rigidity} of the full fundamental group, which in turn springs from the fact that the (outer) action of the `arithmetic' part of this group\dots is extraordinarily strong'' \cite{GrothLTF}.

F.~Bogomolov had the surprising insight \cite{BogTwoConj} that as long as the dimension of a variety is $\geq 2$, anabelian phenomena can be exhibited --- at least birationally --- even over an algebraically closed field, \textit{even in the complete absence of the ``arithmetic'' part of the group Grothendieck referenced.}

Given a field $K,$ we let $G_K$ denote the absolute Galois group of $K$, the profinite group of field automorphisms of its algebraic closure $\overline{K}$ (see \cite{CohNumFields} for more details).  Given two fields $F_1$ and $F_2$, we let $\Isom^{i}(F_1, F_2)$ denote the set of isomorphisms between the pure inseparable closures of $F_1$ and $F_2$, up to Frobenius twists.  Given two profinite groups $\Gamma_1$ and $\Gamma_2$, we let $\Isomext(\Gamma_1, \Gamma_2)$ denote the set of equivalence classes of continuous isomorphisms from $\Gamma_1$ to $\Gamma_2$, modulo conjugation by elements of $\Gamma_{2}$.  There is a canonical map
\begin{formula}
\varphi_{F_{1}, F_{2}}: \Isom^{i}(F_{1}, F_{2})\rightarrow \Isomext(G_{F_{2}}, G_{F_{1}})
\end{formula}
which, in general, is neither injective nor surjective.

The birational theory of a variety of dimension $n$ over $K$ is encoded in its field of rational functions, and every field finitely-generated over $K$ and of transcendence degree $n$ arises as the field of rational functions of a $K$-variety of dimension $n$. F.~Pop, developing Bogomolov's insight, conjectured an anabelian theorem for fields, finitely-generated and of transcendence degree $n\geq 2$ over an algebraically closed field $k$.  We complete the proof of:
\begin{theorem}[The Conjecture of Bogomolov-Pop for $k = \overline{\mathbb{Q}}, \overline{\mathbb{F}}_{p}$]\label{BogPopConj}
Let $F_{1}$ and $F_{2}$ be fields finitely-generated and of transcendence degree $\geq 2$ over $k_{1}$ and $k_{2}$, respectively, where $k_{1}$ is either $\overline{\mathbb{Q}}$ or $\overline{\mathbb{F}}_{p}$, and $k_{2}$ is algebraically closed.  Then $\varphi_{F_{1}, F_{2}}$ is a bijection.  Thus, function fields of varieties of dimension $\geq 2$ over algebraic closures of prime fields are anabelian.
\end{theorem}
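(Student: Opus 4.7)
The plan is to reduce the statement to a Galois-theoretic reconstruction of $F_i$ from a small functorial quotient of $G_{F_i}$. Fix a prime $\ell \neq \operatorname{char} k_i$ and let $\Pi_{F_i}^{c}$ denote the maximal pro-$\ell$ abelian-by-central quotient of $G_{F_i}$; since this quotient is canonically determined by $G_{F_i}$, any element of $\Isomext(G_{F_2}, G_{F_1})$ descends to an isomorphism $\Pi_{F_2}^{c} \cong \Pi_{F_1}^{c}$. By Kummer theory, the abelianization satisfies $\Pi_{F_i}^{\ab} \cong \Hom(F_i^{\times}/k_i^{\times},\mathbb{Z}_{\ell}(1))$, and Pontryagin duality returns the $\ell$-completion of $F_i^{\times}/k_i^{\times}$ as an abelian group. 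The task is then to recover, inside this abelian group, the image of every $k_i$-line $k_i + k_i\cdot f \subset F_i$ as a distinguished subset; for enough $f \in F_i$ this recovers the projective geometry $\mathbb{P}(F_i)$ over $k_i$, and the fundamental theorem of projective geometry (FTPG) produces a field isomorphism $F_2 \to F_1$ up to Frobenius twist, which is exactly an element of $\Isom^{i}(F_2, F_1)$ mapping to the prescribed class in $\Isomext(G_{F_1}, G_{F_2})$. Injectivity is then checked by the observation that two such field isomorphisms inducing conjugate outer isomorphisms must agree on all lines and hence coincide up to Frobenius.

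The key intermediate step is the \textbf{local theory}: a group-theoretic characterization inside $\Pi_{F_i}^{c}$ of the inertia and decomposition subgroups $T_{v} \leq Z_{v}$ attached to each divisorial valuation $v$ of $F_i | k_i$. Here the anabelian intersection theory of the paper does the essential work. One shows that two cyclic subgroups of $\Pi_{F_i}^{\ab}$ come from prime Weil divisors meeting on a common normal projective model precisely when their lifts to $\Pi_{F_i}^{c}$ commute, and that divisoriality is detected as maximality in a canonical filtration of commuting pro-cyclic subgroups; iterating over flags of commuting inertia reconstructs the full incidence combinatorics of Weil divisors across all birational models of $F_i | k_i$. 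Once divisorial inertia is intrinsically identified, one reads off the induced $v$-adic filtrations on $F_i^{\times}/k_i^{\times}$, and a class is expressible as $f \in k_i + k_i \cdot g$ for some generating $g$ of a rational pencil iff its divisor decomposes as a difference of prime divisors whose supports lie in a common linear system --- a condition expressible purely in the reconstructed incidence data.

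The principal obstacle is the local theory itself, and specifically its proof when $k_1 = \overline{\mathbb{Q}}$. Over $\overline{\mathbb{F}}_p$ one has access to characteristic-$p$ rigidity phenomena (wild ramification, Artin-Schreier-Witt filtrations) used by Bogomolov-Tschinkel and Pop to pin down inertia; over $\overline{\mathbb{Q}}$ no such combinatorial input is available, and one must replace it with an intrinsic intersection-theoretic characterization of divisorial inertia as the maximally commuting families of pro-cyclic subgroups in $\Pi_{F_i}^{c}$ whose joint rank matches $\dim F_i/k_i$. The strategy exploits that $\overline{\mathbb{Q}}$ and $\overline{\mathbb{F}}_p$ are the only algebraic closures of prime fields, so that $k_i$ contains no positive-dimensional transcendence, letting the divisorial rank be detected geometrically from the transcendence degree. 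Once this local reconstruction is in hand, the passage from inertia data to valuation filtrations, from valuations to $k_i$-lines, and from $\mathbb{P}(F_i)$ to $F_i$ itself up to Frobenius twist is a now-standard consequence of Kummer theory, the FTPG, and the theory of purely inseparable closures; this completes the bijectivity of $\varphi_{F_1, F_2}$.
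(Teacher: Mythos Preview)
The paper does not prove \autoref{BogPopConj} by a single uniform argument. It assembles prior results---Pop's reduction to equal characteristic and transcendence degree, Bogomolov--Tschinkel and Pop for $k=\overline{\mathbb{F}}_p$, and Pop for $k=\overline{\mathbb{Q}}$ with $\trdeg \geq 3$---and supplies only the missing case $k=\overline{\mathbb{Q}}$, $\trdeg = 2$ (\autoref{BirationalAnabelianTheoremForSurfaces}). Your proposal instead sketches the \emph{birational reconstruction} strategy (pro-$\ell$ abelian-by-central quotient, Kummer theory, commuting-pair recognition of divisorial inertia, recovery of $k_i$-lines, FTPG) used in those prior works. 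The paper explicitly contrasts this with its own \emph{geometric reconstruction}: taking Pop's local theory (\autoref{ValuationsRecipe}, \autoref{theorem:GeometricSetsRecipe}) as \emph{input}, it reconstructs a smooth projective model $\caM(\caS)$, its intersection numbers (\autoref{thm:localanabelianintersectiontheorem}), its linear and algebraic equivalence, its very ample divisors, and finally its projective coordinate ring, recovering $F$ as the function field (\autoref{thm:geometricreconstruction}).

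There is a genuine gap in your proposal. You locate the obstacle in the local theory over $\overline{\mathbb{Q}}$ and assert that ``the anabelian intersection theory of the paper'' resolves it; both claims are off. The local theory---group-theoretic recognition of divisorial inertia and decomposition---was already established by Pop in all relevant cases and is cited, not proven, here; and the paper's anabelian intersection theory is not a commuting-pair criterion in $\Pi^{c}$ but the identity $t_{w\circ v_1}^{a} = i(p, C_1\cdot C_2; X)\, t_{v_2}^{a}$ in $\poe(X\setminus C_2)^{\ab}$, used to read off intersection numbers on a fixed model. The actual obstruction in $\trdeg = 2$ over $\overline{\mathbb{Q}}$ lies exactly in the passage you dismiss as ``now-standard'': Pop's route from divisorial data to $k$-lines relied on rational projections and an inductive descent on transcendence degree, which is unavailable at $\trdeg = 2$, and there is no characteristic-$p$ rigidity to substitute. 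Your sketch supplies nothing for this step. The paper circumvents it entirely by never reconstructing lines in $F^{\times}/k^{\times}$; instead it recognizes visible affine fibrations and proper geometric sets group-theoretically, then builds $\bigoplus_{n\geq 0}\Sym^{n}V/I_{n}$ for a very ample $D$ directly from $(G_F,\caS)$ and takes $\Proj$.
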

In \cite{PopIOM}, Pop proved that if $G_{F_{1}} \simeq G_{F_{2}}$ then $F_{1}$ and $F_{2}$ have the same characteristic and transcendence degree.  Thus, the conjecture reduces to the case when $F_{1}$ and $F_{2}$ are of the same characteristic and transcendence degree.  Bogomolov and Tschinkel \cite{BogTschRec}  provide a proof in the case of transcendence degree $= 2$ when $k = \overline{\mathbb{F}}_{p}$. Pop proved that $\varphi$ is a bijection when $F_{1}$ has transcendence degree $\geq 2$ and $k = \overline{\mathbb{F}}_{p}$ \cite{PopBog1}; and when $F_{1}$ has transcendence degree $\geq 3$ and $k = \overline{\mathbb{Q}}$ \cite{PopBog2}.    We prove the missing case:

\begin{theorem}[The Birational Anabelian Theorem for Surfaces over $\overline{\mathbb{Q}}$]\label{BirationalAnabelianTheoremForSurfaces} Let $F_{1}$ and $F_{2}$ be fields finitely-generated and of transcendence degree $2$ over $\overline{\mathbb{Q}}$.  Then $\varphi_{F_{1}, F_{2}}$ is a bijection.
\end{theorem}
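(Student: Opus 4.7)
The plan is to execute the Bogomolov--Pop program in the setting of surfaces over $\overline{\mathbb{Q}}$. Fix a prime $\ell$ and pass to the maximal pro-$\ell$ abelian-by-central quotient $G_{F_i}^{c,\ell}$: Kummer theory identifies its abelianization with $\widehat{F_i^{\times}} \otimes \ZZ_\ell$, and the central commutator pairing encodes relations used to detect valuations. Any class $\Phi \in \Isomext(G_{F_{2}}, G_{F_{1}})$ descends to an isomorphism of these pro-$\ell$ packages, and the goal is to promote it to a field isomorphism $F_1^i \to F_2^i$ (up to Frobenius twist), thereby constructing an inverse to $\varphi_{F_1, F_2}$; injectivity is already essentially in hand from Pop's earlier work, so I concentrate on surjectivity.

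The first, and main, task is the \emph{local theory}: identifying, group-theoretically inside $G_{F_i}^{c,\ell}$, the decomposition and inertia subgroups of prime-divisorial valuations $v_{D}$ on smooth models $X \to \Spec \overline{\mathbb{Q}}$. Over $\overline{\mathbb{F}}_p$ one uses Frobenius in residue fields, and for $\trdeg \geq 3$ over $\overline{\mathbb{Q}}$ one uses that residue fields of divisorial valuations are themselves function fields of $\trdeg \geq 2$; for surfaces over $\overline{\mathbb{Q}}$ the residue fields are curve function fields, lying outside the reach of current anabelian results. I therefore develop a genuinely two-dimensional, \emph{intersection-theoretic} characterization: two divisorial inertia classes $T_{v_{D_1}}$ and $T_{v_{D_2}}$ admit nontrivial commutation in $G_{F}^{c,\ell}$ exactly when $D_1$ and $D_2$ share a point on some model, and the corresponding rank-$2$ Parshin flag — whose decomposition group is recoverable from the pairing — witnesses that intersection. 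Coupled with a separation-of-valuations argument that transfers information from the overfield $F(T)$ (where Pop's birational theorem \cite{PopBog2} applies, since $\trdeg F(T)/\overline{\mathbb{Q}} = 3$) via specialization back to $F$, this should pin down the divisorial subgroups of $G_F^{c,\ell}$ in a $\Phi$-equivariant way.

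Once the local theory is in place, $\Phi$ induces an isomorphism $\widehat{F_1^{\times}/k_1^{\times}} \otimes \ZZ_\ell \cong \widehat{F_2^{\times}/k_2^{\times}} \otimes \ZZ_\ell$ compatible with the lattice of valuation data. I then recognize, Galois-theoretically, the ``rational pencils'' — the sub-$k^\times$-modules $k(t)^{\times}/k^{\times}$ for generic $t \in F$ — by their characteristic visibility in the divisorial data (a rational pencil is cut out by its support on the corresponding linear system of divisors), and apply a form of the fundamental theorem of projective geometry, in the style of Bogomolov--Pop--Mochizuki, to upgrade the multiplicative isomorphism to one respecting addition. The hardest step will be the intersection-theoretic local theory of the second paragraph: it is precisely here that the ``anabelian intersection theory'' of the title enters, and where the simultaneous absence of Frobenius and of spare transcendence degree forces the introduction of genuinely new techniques, centered on the Kummer-theoretic shadow of the intersection pairing on $\NS$ of smooth projective models.
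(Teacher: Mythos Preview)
Your outline follows the \emph{birational reconstruction} paradigm (pro-$\ell$ abelian-by-central quotient, Kummer identification of $\widehat{F^\times}\otimes\ZZ_\ell$, recognition of rational pencils, FTPG on the multiplicative lattice). The paper explicitly names this approach and then departs from it: its method is \emph{geometric reconstruction}. Rather than recovering $F^\times/k^\times$ first, the paper works in the full $G_F$, takes Pop's recipes for Parshin-chain inertia and for geometric sets $\caS$ as given input, builds a maximal smooth model $\caM(\caS)$, defines points as equivalence classes of rank-$2$ Parshin chains via the sets $\Delta(p\circ v)$, reads off local intersection numbers as indices $[T_v^a:T_{p\circ w}^a]$ in suitable $\Pi_{\caS\setminus Y}^{\ab}$, characterizes ``visible affine'' geometric sets (fibrations over hyperbolic curves) by a list of group-theoretic axioms, and from these recovers linear equivalence and complete linear systems $|D|$ directly. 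The fundamental theorem of projective geometry is then applied to $|D|$ --- a projective space of effective divisors --- not to $F^\times/k^\times$, and one reads off a graded projective coordinate ring, hence $F$.

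Two points where your plan diverges from what is actually needed. First, you flag the local theory (group-theoretic detection of divisorial inertia) as ``the first, and main, task,'' and propose an $F(T)$-specialization argument to handle it. But this step is already Pop's theorem (the paper's \autoref{ValuationsRecipe} and \autoref{theorem:GeometricSetsRecipe}), valid in transcendence degree $\geq 2$ over $\overline{\mathbb{Q}}$, and the paper simply quotes it as input; no new argument is required there, and the specialization from $F(T)$ is neither used nor needed. Second, the genuine obstruction in transcendence degree $2$ over $\overline{\mathbb{Q}}$ is precisely the step you pass over quickly: recognizing rational pencils ``by their characteristic visibility in the divisorial data.'' That phrase hides the whole difficulty --- this is where the prior birational-reconstruction arguments (which succeeded for $\overline{\mathbb{F}}_p$ or for $\trdeg\geq 3$) did not go through, and your proposal offers no mechanism to fill the gap. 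The paper's substitute is the long development of \S\S\ref{globaltheoryI}--\ref{localgeometry}: points, recognition of intersections, visible affines, properness, and group-theoretic linear equivalence via complete families. The ``anabelian intersection theory'' of the title is not a commutator-pairing shadow of $\NS$ inside $G_F^{c,\ell}$ as you suggest, but a literal computation of local intersection multiplicities $i(p,C_1\cdot C_2;X)$ from inertia indices in $\poe(X\setminus C_2)^{\ab}$ (Reeve's lemma and \autoref{thm:localanabelianintersectiontheorem}), carried out model by model.
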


The proof of \autoref{BirationalAnabelianTheoremForSurfaces} is substantially different in structure from the other cases of \autoref{BogPopConj}.  They both have the same starting point, two theorems due to Pop from \cite{PopBog2} and described in \autoref{models}:
\begin{enumerate}
\item Given a subgroup $\Gamma \subseteq G_{F}$ there is a profinite group-theoretic recipe (\autoref{ValuationsRecipe}) which determines whether or not $\Gamma$ is a decomposition or inertia group of a Parshin chain (\autoref{def:ParshinChain}).
\item Given a collection $\caS = \{T_{v}\}$ of inertia groups of rank-$1$ Parshin chains (which are group-theoretically definable by \autoref{ValuationsRecipe}), there is a recipe to determine whether there is a model $X$ of $F$ --- that is, a smooth variety with function field $F$ --- for which $\caS$ is the set of inertia groups of Weil prime divisors centered on $X$.  In this case, $\caS$ is called a \textbf{geometric set of prime divisors} (\autoref{def:geometricset}).
\end{enumerate}
Previous results took data such as these and reconstructed $F$ directly, in a process which we now term \textbf{birational reconstruction}.  However, in our approach, we instead take the pair $(G_{F}, \caS)$ and reconstruct a model $\caM(\caS)$ of $F$ for which $\caS$ is the collection of inertia subgroups of all prime divisors on $\caM(\caS)$.  We obtain a description of the geometry of $\caM(\caS)$ without first reconstructing $F$, and we call this approach \textbf{geometric reconstruction}.  The main tool is the ability to interpret intersection theory on $\caM(\caS)$ using only group theoretic recipes applied to $\caS$ and $G_{F},$ without any knowledge of $\caM(\caS)$ other than its existence; this technique is the \textbf{anabelian intersection theory} of the title.  In \autoref{pf:BiratAnabThm} we show how \autoref{BirationalAnabelianTheoremForSurfaces} follows from the geometric reconstruction results which take up the bulk of the paper.  A generalization of the geometric reconstruction technique in transcendence degree $\geq 2$ for arbitrary characteristic will come in a sequel to this paper.

Grothendieck also asked for a ``purely geometric'' description of the group $G_{\mathbb{Q}}.$ Anabelian geometry over $\overline{\mathbb{Q}}$ gives such a description.\footnote{In fact, Y.~Ihara asked a more refined question, which T.~Oda and M.~Matsumoto raised to a conjecture: is $G_{\mathbb{Q}}$ exactly the outer automorphisms of the \'etale fundamental group functor from the category of $\mathbb{Q}$-varieties to profinite groups?  Pop showed \cite{PopIOM} how birational anabelian theorems over $\overline{\mathbb{Q}}$ can be used to provide proofs of this conjecture.  Thus, \autoref{BirationalAnabelianTheoremForSurfaces} gives us a new proof of the Question of Ihara/Conjecture of Oda-Matsumoto.  We will elaborate on applications of geometric reconstruction to refinements of the Question of Ihara/Conjecture of Oda-Matsumoto in a later paper.} 

Let $X$ be an irreducible, geometrically integral, algebraic variety of dimension $\geq 2$ defined over $\overline{\mathbb{Q}}$ so that $X$ has no birational automorphisms defined over $\overline{\mathbb{Q}}$. We call such an $X$ \textbf{birationally, geometrically rigid}.  Let $k$ be the intersection of all subfields of $\overline{\mathbb{Q}}$ over which a variety birationally equivalent to $X$ is defined.  Let $\overline{\mathbb{Q}}(X)$ be the rational function field of $X$.  By our assumptions, $\overline{\mathbb{Q}}(X)$ is a field, finitely generated over $\overline{\mathbb{Q}}$.  \autoref{GGInvLimofPi1} shows that $G_{\overline{\mathbb{Q}}(X)}$ is a geometric object: it is the inverse limit of the profinitely-completed fundamental groups of all complements of $\overline{\mathbb{Q}}$-divisors in $X$.

\begin{theorem}[The Geometric Description of Absolute Galois Groups of Number Fields.]\label{outerauts}
The natural map
\begin{equation}
G_{k}\rightarrow \Out_{\cont}(G_{\overline{\mathbb{Q}}(X)})
\end{equation}
is an isomorphism, and the compact-open topology induces the topology on $G_{k}$ induced by its structure as a Galois group.
\end{theorem}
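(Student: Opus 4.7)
The plan is to combine two identifications: (a) $G_k \cong \Aut(\overline{\mathbb{Q}}(X))$, via birational rigidity of $X$ together with the characterization of $k$; and (b) $\Aut(\overline{\mathbb{Q}}(X)) \cong \Out_{\cont}(G_{\overline{\mathbb{Q}}(X)})$, via the birational anabelian theorem (\autoref{BogPopConj}, completed in transcendence degree $2$ over $\overline{\mathbb{Q}}$ by \autoref{BirationalAnabelianTheoremForSurfaces}).

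Write $F = \overline{\mathbb{Q}}(X)$. Any $\tau\in \Aut(F)$ stabilizes the relative algebraic closure $\overline{\mathbb{Q}}$ of the prime field in $F$, so restriction yields $r: \Aut(F) \to G_{\mathbb{Q}}$. Its kernel $\Aut_{\overline{\mathbb{Q}}}(F) = \Bir_{\overline{\mathbb{Q}}}(X)$ is trivial by birational rigidity, so $r$ is injective. An element $\sigma \in G_{\mathbb{Q}}$ lies in $\im(r)$ iff the Galois twist $X^{\sigma}$ is birational to $X$ over $\overline{\mathbb{Q}}$, so $\im(r)$ is the $G_{\mathbb{Q}}$-stabilizer of the birational class $[X]$; call this stabilizer $G_{k'}$. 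Every $L$ over which $X$ is birationally defined satisfies $G_L\subseteq G_{k'}$, whence $k'\subseteq k$; conversely, $G_{k'}$-stability of $[X]$ combined with $\Bir_{\overline{\mathbb{Q}}}(X)=1$ forces the Galois cocycle for a $k'$-descent to be automatically trivial, producing a $k'$-birational model, and hence $k\subseteq k'$. Thus $r$ induces an isomorphism $\Aut(F) \xrightarrow{\sim} G_k$ of abstract groups.

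Applying \autoref{BogPopConj} with $F_{1} = F_{2} = F$: since $F$ has characteristic $0$, $\Isom^{i}(F, F) = \Aut(F)$, and $\Isomext(G_F, G_F) = \Out_{\cont}(G_F)$ by definition, so $\varphi_{F,F}$ (a group homomorphism) produces an isomorphism $\Aut(F) \cong \Out_{\cont}(G_F)$. Composing with the previous step yields $G_k \cong \Out_{\cont}(G_F)$. A definition chase shows this is the outer conjugation action arising from the short exact sequence of profinite groups
\[
1 \to G_F \to G_{k(X_{0})} \to G_k \to 1,
\]
where $X_{0}$ is a $k$-birational model of $X$ and $F = \overline{\mathbb{Q}}\cdot k(X_{0})$, so it coincides with the natural map of the statement.

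For the topology: $G_k$ is compact, $\Out_{\cont}(G_F)$ with compact-open topology is Hausdorff, and the outer action is continuous (the conjugation $G_{k(X_{0})} \times G_F \to G_F$ is continuous, and since $G_F$ is locally compact Hausdorff, currying gives continuity of $G_{k(X_{0})} \to \Out_{\cont}(G_F)$, which factors through the quotient $G_k$). A continuous bijection from a compact space to a Hausdorff space is a homeomorphism, so the compact-open topology on $\Out_{\cont}(G_F)$ induces exactly the Krull topology on $G_k$. The main substantive input is \autoref{BogPopConj}; the descent argument identifying $\im(r)$ with $G_k$ and the topology matching are formal consequences given that theorem and birational rigidity.
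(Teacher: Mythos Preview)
Your proof is correct and follows the same two-step structure as the paper: first identify $\Aut(F)$ with $G_k$ via birational rigidity, then invoke the birational anabelian theorem (\autoref{BogPopConj}, which is the appropriate reference since the hypothesis allows $\dim X \geq 2$). The paper's own argument in \autoref{sec:AutomorphismGroups} is a single sentence asserting these two facts; you have supplied the descent argument for $\Aut(F)\cong G_k$ and the compact--Hausdorff argument for the topology claim, both of which the paper leaves implicit.
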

\autoref{outerauts} answers Grothendieck's question; this is analogous to a theorem of I.~Bumagin and D.~Wise, which realizes any countable group as the outer automorphism group of a finitely generated group \cite{WiseBumOutGroup}. In \autoref{sec:AutomorphismGroups}, we write down, for any number field $k$, an infinite family of explicit varieties $X$ which satisfy the hypotheses of \autoref{outerauts}.

Any group $G_{\overline{\mathbb{Q}}(X)}$ which satisfies the hypotheses of \autoref{outerauts} is infinitely-generated and infinitely-presented.  It is then natural to ask whether absolute Galois groups of number fields are outer automorphism groups of finitely-generated, finitely-presented, profinite groups of geometric provenance.

The ``smallest group'' currently considered to be a candidate to give such a geometric representation is the group $\widehat{GT}$, whose study was initiated by Drinfel'd \cite{Drinfeld} and Ihara \cite{IharaBraids}.  $\widehat{GT}$ admits an injective group homomorphism 
\begin{equation}
\rho: G_{\mathbb{Q}}\rightarrow \widehat{GT}.
\end{equation}
It is not known whether $\rho$ is surjective.  $\widehat{GT}$ is a much-studied yet poorly-understood object; we review in \autoref{sec:GT} the theory we will need, and refer to \cite{LochakSchneps} for a more exhaustive survey.

We denote by $\caM_{0,5}$ the moduli space of genus $0$ curves with $5$ distinct, marked, ordered points.  As part of the construction of $\widehat{GT}$, we have an injection
\begin{formula}\label{etadefinition}\eta: \widehat{GT}\rightarrow \Out(\poe(\caM_{0,5})),\end{formula}
such that the composition
\begin{equation}\eta\circ \rho: G_{\mathbb{Q}}\rightarrow \Out(\poe(\caM_{0,5}))\end{equation}
is the injection induced by the theory of the fundamental group.

We prove a necessary and sufficient criterion for an element of $\widehat{GT},$ as determined by its image under $\eta$, to be in the image of $\rho$.  $\caM_{0,5}$ is birationally equivalent to $\mathbb{P}^{2}$, which gives the scheme-theoretic inclusion
\begin{equation}
\gamma: \Spec(\overline{\mathbb{Q}}(x,y))\rightarrow \caM_{0,5}
\end{equation}
of the generic point of $\caM_{0,5}$.  The \'etale fundamental group functor then gives a continuous surjection of profinite groups
\[
\gamma_{*}: G_{\overline{\mathbb{Q}}(x,y)}\rightarrow \poe(\caM_{0,5}),
\]
well-defined up to conjugation by an element of $\poe(\caM_{0,5})$.
\begin{theorem}\label{LiftingCondition}
Let $\alpha \in \im \eta.$  Then $\alpha \in \im \eta\circ \rho$ if and only if $\alpha$ satisfies the following lifting condition: there exists an automorphism
\begin{equation}
\tilde{\alpha}: G_{\overline{\mathbb{Q}}(x,y)}\rightarrow G_{\overline{\mathbb{Q}}(x,y)}
\end{equation}
so that the diagram
\begin{center}
\mbox{\xymatrix{
G_{\overline{\mathbb{Q}}(x,y)} \ar[r]^{\tilde{\alpha}} \ar[d]^{\gamma_{*}}& G_{\overline{\mathbb{Q}}(x,y)} \ar[d]^{\gamma_{*}} \\
\poe(\caM_{0,5})\ar[r]^{\alpha} & \poe(\caM_{0,5})
}}
\end{center}
commutes up to inner automorphisms.
\end{theorem}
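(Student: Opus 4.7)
The plan is to handle the two directions separately.

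\textbf{Forward direction.} Given $\sigma \in G_{\mathbb{Q}}$ with $\alpha = \eta\circ\rho(\sigma)$, since $\caM_{0,5}$ and its $\mathbb{Q}$-rational birational model $\mathbb{P}^{2}$ with coordinates $(x,y)$ are both defined over $\mathbb{Q}$, $\sigma$ extends coefficient-wise to an automorphism $\tau_{\sigma} \in \Aut(\overline{\mathbb{Q}}(x,y))$ fixing $x,y$. Set $\tilde\alpha := \varphi(\tau_{\sigma})$. By naturality of $\poe$ under the $\mathbb{Q}$-rational morphism $\gamma$, the outer action of $\tilde\alpha$ on $\poe(\caM_{0,5})$ via $\gamma_{*}$ equals the geometric Galois action of $\sigma$, which by definition of $\eta\circ\rho$ equals $\alpha$; hence the diagram commutes up to inner automorphism.

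\textbf{Backward direction.} Suppose $\tilde\alpha$ makes the diagram commute. By \autoref{BirationalAnabelianTheoremForSurfaces}, in characteristic $0$ the map $\varphi$ is a bijection between $\Aut(\overline{\mathbb{Q}}(x,y))$ and $\Out_{\cont}(G_{\overline{\mathbb{Q}}(x,y)})$ (the $\Isom^{i}$/Frobenius-twist ambiguity collapses), so $\tilde\alpha$ is realized by a genuine field automorphism $\tau$ of $\overline{\mathbb{Q}}(x,y)$. Set $\sigma := \tau|_{\overline{\mathbb{Q}}} \in G_{\mathbb{Q}}$; the task reduces to showing $\alpha = \eta\rho(\sigma)$. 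Put $\beta := \alpha \cdot \eta\rho(\sigma)^{-1}$ and $g := \tau\tau_{\sigma}^{-1}$. Then $g$ lies in $\Aut_{\overline{\mathbb{Q}}}(\overline{\mathbb{Q}}(x,y))$---the Cremona group of $\mathbb{P}^{2}_{\overline{\mathbb{Q}}}$---and realizes $\beta$ under $\gamma_{*}$. Commutativity of the diagram forces $g$ to preserve $\ker\gamma_{*}$; using the anabelian intersection theory described in \autoref{models}, $\ker\gamma_{*}$ is the normal closure in $G_{\overline{\mathbb{Q}}(x,y)}$ of the inertia subgroups of the boundary divisors of $\caM_{0,5}$, so $g$ in fact descends to a birational self-map of $\caM_{0,5,\overline{\mathbb{Q}}}$.

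\textbf{Main obstacle.} At this point $\beta$ lies simultaneously in $\im\eta$ (as a product of elements of $\im\eta$, since $\eta$ is a homomorphism) and equals $g_{*}$ for some $g \in \Bir(\caM_{0,5,\overline{\mathbb{Q}}})$. The heart of the proof, and the principal obstacle, is to conclude that any such $\beta$ is trivial in $\Out(\poe(\caM_{0,5}))$---equivalently, that $\im\eta$ intersects the image of $\Bir(\caM_{0,5,\overline{\mathbb{Q}}})$ only at the identity. I plan to establish this by combining the rigidity of $\widehat{GT}$'s action on the distinguished inertia (tangential base point) data at the cusps of $\caM_{0,5}$ with the fact that any nontrivial element of $\Bir(\caM_{0,5,\overline{\mathbb{Q}}})$ permutes those cusps nontrivially---a permutation which is group-theoretically detectable by \autoref{ValuationsRecipe} and incompatible with the combinatorial constraints defining $\widehat{GT}$. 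Once $\beta = 1$ is established, $\alpha = \eta\rho(\sigma)$ and the theorem follows.
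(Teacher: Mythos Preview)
Your forward direction is fine and matches the paper.

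In the backward direction you correctly invoke \autoref{BirationalAnabelianTheoremForSurfaces} to get a field automorphism $\tau$, and the reduction to the $\overline{\mathbb{Q}}$-linear $g=\tau\tau_\sigma^{-1}$ is reasonable. The genuine gap is the step where you pass from ``$g$ preserves $\ker\gamma_*$'' to ``$g$ descends to a birational self-map of $\caM_{0,5}$.'' That conclusion is vacuous: $\caM_{0,5}$ is a Zariski open in $\mathbb{P}^2$, so \emph{every} element of the Cremona group is already a birational self-map of $\caM_{0,5}$. What you actually need is that $g$ is a \emph{regular} automorphism of $\caM_{0,5}$, i.e.\ that $g$ preserves the geometric set $\caS$ with $\caM(\caS)\simeq\caM_{0,5}$, not merely the normal subgroup $\langle T_v\rangle_{v\in\caS}=\ker\gamma_*$ it generates. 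A priori there could be many geometric sets generating the same kernel. The paper isolates exactly this point in \autoref{visaffopengeomsets}: for \emph{visible affine} geometric sets the kernel determines the set. Since $\caM_{0,5}$ is a visible affine and $g$ sends $\caS$ to another visible affine geometric set with the same kernel, the lemma forces $g(\caS)=\caS$, whence $g\in\Aut_{\overline{\mathbb{Q}}}(\caM_{0,5})$. Without this lemma your ``main obstacle'' is not merely hard but rests on a false premise: your claim that ``any nontrivial element of $\Bir(\caM_{0,5,\overline{\mathbb{Q}}})$ permutes those cusps nontrivially'' fails badly, since $\Bir(\caM_{0,5,\overline{\mathbb{Q}}})$ is the full Cremona group of $\mathbb{P}^2$.

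Once one knows $g\in\Aut_{\overline{\mathbb{Q}}}(\caM_{0,5})=S_5$ (equivalently, $\tau\in\Aut(\caM_{0,5}/\mathbb{Z})\simeq G_{\mathbb{Q}}\times S_5$), the paper finishes differently from your proposed cusp-permutation argument: it uses that $\alpha\in\im\eta=\Out^{\#}_5$ commutes with the $S_3$-action built into the Harbater--Schneps description, and the centralizer of that $S_3$ inside $S_5$ is trivial, forcing $g=1$. Your plan to detect a nontrivial cusp permutation via the inertia-preservation clause of $\Out^{\#}_5$ could be made to work at this stage, but you never invoke the $S_3$-commutation condition, which is the cleanest way to kill the $S_5$ factor.
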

This is the first necessary and sufficient \textit{geometric} condition for an element of the Grothendieck-Teichm\"uller group to lie in the image of $G_{\mathbb{Q}}$.

In \autoref{sec:AutomorphismGroups} we provide explicit examples of surfaces which satisfy the hypotheses of \autoref{outerauts}, and we conclude with the proof of \autoref{LiftingCondition}.

\section{The Geometric Interpretation of Inertia and Decomposition Groups of Parshin Chains}\label{sec:GeomInterpretation}
\begin{definition} \label{def:functionfield}Let $F$ be a field finitely generated over some algebraically closed field $K$ of characteristic zero; such a field will be called a \textbf{function field}\index{definitions}{Function field}.  Note that the field $K$ is determined by $F$ (for instance, its multiplicative group is the set of all divisible elements in the multiplicative group of $F$); it will be denoted by $K(F)$, and be called the \textbf{field of constants} of $F$\index{definitions}{Field of constants}.  The transcendence degree of $F$ over $K(F)$ will be called the \textbf{dimension}\index{definitions}{Dimension} of $F$.  We will denote by $G_F$ the absolute Galois group of $F$\index{notation}{GF@$G_{F}$} and $\overline{F}$\index{notation}{Fbar@$\overline{F}$} the algebraic closure of $F$.  
\end{definition}
For general theory of valuations, including proofs of the algebraic theorems cited without proofs, see \cite{JardenFried}. We will also use results from \cite{GAGA} and \cite{SGA1} with impunity.  

\begin{definition}\label{def:valuation}A \textbf{valuation} $v$ on $F$ is an ordered group $(vF, \leq),$\index{notation}{vF@$vF$} called the \textbf{value group} of $v$\index{definitions}{Value group}, along with a surjective map\index{definitions}{Valuation}
\begin{equation}\label{not:valuegroup}
v: F\rightarrow vF \cup \{\infty\}
\end{equation}\index{notation}{v@$v$}
which satisfies
\begin{enumerate}
\item $v(x) = \infty$ if and only if $x = 0$.
\item $v(xy) = v(x) + v(y)$ (here, we define $\infty + g = \infty$ for all $g \in vF \cup \{\infty\}$.)
\item $v(x+y) \geq \min \{v(x), v(y)\},$ where we extend the ordering $\leq$ to $vF \cup \{\infty\}$ by $g \leq \infty$ for all $g \in vF$.
\end{enumerate}
A valuation $v$ gives rise to a \textbf{valuation ring}\index{definitions}{Valuation ring} $\caO_{v}$\index{notation}{Ov@$\caO_{v}$}, which is the set of all $x \in F$ such that $v(x) \geq 0$.  $\caO_{v}$ is integrally closed in $F$ and local, and we call its maximal ideal $\mathfrak{m}_{v}$\index{notation}{mv@$\mathfrak{m}_{v}$}.  We then define the \textbf{residue field}\index{definitions}{Residue field} to be
\begin{equation}\label{def:residuefield}
Fv = \caO_{v}/\mathfrak{m}_{v},
\end{equation}\index{notation}{Fv@$F_{v}$}
A subring $R\subseteq F$ is a valuation ring $\caO_{w}$ for some valuation $w$ if and only if for every $x \in F^{\times}$ either $x \in R$ or $x^{-1}\in R$.  Therefore, equivalently, we may define a valuation $v$ by its \textbf{place}\index{definitions}{Place}
\begin{equation}\label{def:valuationplace}
p_{v}: F\rightarrow Fv \cup \{\infty\}
\end{equation}\index{notation}{pv@$p_{v}$}
where $\caO_{v}$ is mapped to its reduction mod $\mathfrak{m}_{v}$ and $F\setminus \caO_{v}$ is mapped to $\infty$.  Any map from a field $F$ to another field $L$ which is a ring homomorphism ``with $\infty$'' thus gives rise to a valuation on $F$.
\end{definition}
\begin{definition}\label{def:equivalenceofvaluations}\index{definitions}{Valuation!Equivalence of}
  Two valuations will be called \textbf{equivalent} if and only if they have the same valuation ring.
  \end{definition}
\begin{definition} If $v$ is a valuation on $F$ and $w$ is a valuation on $Fv$, then we may define a valuation $w\circ v$ on $F$ by considering the composition
\begin{equation}
p_{w}\circ p_{v}: F\rightarrow (Fv)w \cup \{\infty\}
\end{equation}
as a place map on $F$.  This valuation is called the \label{def:valuationcomposition}\index{definitions}{Valuation!Composition}\textbf{composition} of $w$ with $v$.
\end{definition}
\begin{definition}\label{def:model}\index{definitions}{Model}\index{definitions}{Model!Normal}\index{definitions}{Structure map}
A \textbf{model} $X$ of a function field $F$ is a smooth, connected $K(F)$-scheme of finite-type with a  map
\begin{equation}\label{not:structuremap}\index{notation}{sX@$s_{X}$} s_{X}: \Spec F\rightarrow X,
\end{equation}
the \textbf{structure map} of the model, which identifies $F$ with the field of rational $K(F)$-functions on $X$.  A \textbf{normal model} is a model with the requirement of smoothness replaced by normality.
\end{definition}
\begin{definition}\label{not:BirF} By virtue of the structure map, the models form a full subcategory of the category of schemes under $\Spec F$.  We define an \textbf{$F$-morphism} \index{definitions}{F morphism@$F$-morphism} of models to be a morphism of varieties under $\Spec F$, and $\Bir(F)$ \index{notation}{BirF@$\Bir(F)$} the full subcategory of varieties under $\Spec F$ whose objects are precisely the models of $F$.
\end{definition}
\begin{definition}\label{def:center}\index{definitions}{Center} \index{notation}{vv@$\lvert v\rvert$} We say that a valuation $v$ on $F$ \textbf{has a center on} or \textbf{is centered on} $X$ if $X$ admits an affine open subset $\Spec A$ such that $A\subset \caO_{v}$, the valuation ring of $v$.  Let $R\subset F$ be a subring giving an affine open $\Spec R\subseteq X$.  Then the \textbf{center} of $v$ on $\Spec R$ is the Zariski closed subset of $X$ defined as $Z(\mathfrak{m}_{v}\cap R);$ the center of $v$ on $X$ is the union of the centers of $v$ on $\Spec R$ as $\Spec R$ ranges over all affine opens of $X$.  We denote by $|v|$ the center of $v$.
\end{definition}
\begin{definition}
A \textbf{prime divisor}\index{definitions}{Divisor, prime} $v$ on $F$ is a discrete valuation trivial on $K(F)$ such that \begin{equation}\label{def:notranscendencedefect}\trdeg_{K(F)} Fv = \trdeg_{K(F)} F - 1.\end{equation}\index{definitions}{Transcendence defect}
(This condition is very important in birational anabelian geometry in general, and we say that $v$ has \textbf{no transcendence defect}; see \cite{PopAnab} for more details.)
\end{definition}
\begin{definition}A \textbf{rank-$1$ Parshin chain} \index{definitions}{Parshin chain} for $F$ is a prime divisor.  A \textbf{rank-$i$ Parshin chain} is a composite $w\circ v$, where $v$ is a rank-$(i-1)$ Parshin chain, and $w$ is a prime divisor on $Fv$.  \end{definition}
\begin{definition}\label{def:ParshinChain}
We denote by $\Par_{i}(F)$\index{notation}{Pari@$\Par, \Par_{i}$|(} the collection of $i$-Parshin chains for $F,$ where $i \leq \trdeg_{K(F)} F$.  Given a rank-$k$ Parshin chain $v$ we denote by $\Par_{i}(v)$ the collection of rank-$i$ Parshin chains of the form $w\circ v$.  $\Par_{i}(v)$ is empty if $i < k$; is $\{v\}$ if $i = k$; and is infinite if $i > k$.  If $S\subseteq \Par_{k}(F)$ we define \begin{equation}
\Par_{i}(S) = \bigcup_{v \in S} \Par_{i}(v).
\end{equation}
We also let
\begin{equation}
\Par(S) = \bigcup_{i} \Par_{i}(S)\text{ and }\Par(F) = \bigcup_{i} \Par_{i}(F).
\end{equation}
\index{notation}{Pari@$\Par, \Par_{i}$|)}
\end{definition}
\begin{example}To describe the rank-$1$ Parshin chains on $F$, where $F$ is the function field of a \textbf{surface} over $\overline{\mathbb{Q}}$, we consider the set of all pairs $(X, D)$ where $X$ is a \textbf{proper} model of $F$ and $D$ is a prime divisor on $X$.  On this collection, we have an equivalence relation generated by the relation $\sim$, where we say \[(X, D)\sim (X', D')\] if there exists a rational map
\[
\varphi: X\rightarrow X'
\]
respecting the structure maps of the models $X$ and $X'$ such that $D$ is mapped birationally to $D'$ by $\varphi$.

The rank-$2$ Parshin chains are then equivalence classes $(X, D, p)$ where $D$ is a prime divisor on $D$ and $p$ is a smooth point on $D$, and the equivalence relation $\sim$ is now generated by saying \[(X, D, p) \sim (X', D', p')\] as long as there is a rational map 
\[
\varphi: X\rightarrow X'
\]
respecting the structure maps of the models $X$ and $X'$ so that $D$ is mapped birationally to $D'$ by $\varphi$ and $p$ is mapped to $p'$ by $\varphi$.
\end{example}
Given an algebraic extension $L|F$ every valuation extends to $L$, though not necessarily uniquely.
\begin{definition}We define \index{notation}{Xv@$\caX_{v}$} $\caX_v(L|F)$ to be the set of valuations on $L$ which restrict to $v$ on $F$.  If $L|F$ is Galois, then $\Gal(L|F)$ acts transitively on $\caX_v(L|F)$.  For any Galois extension $L|F$ and $\tilde{v} \in \caX_{v}(L|F)$ we define the \textbf{decomposition group}\index{definitions}{Decomposition group} \begin{equation}\index{notation}{Dvt@$D_{\tilde{v}}$}
D_{\tilde{v}}(L|F)\eqdef \left\{\sigma \in \Gal(L|F) \mid \sigma(\caO_{\tilde{v}}) = \caO_{\tilde{v}}\right\}.\end{equation} Each $D_{\tilde{v}}(L|F)$ has a normal subgroup, the \textbf{inertia group}\index{definitions}{Inertia group}
$T_{\tilde{v}}(L|F)$\index{notation}{Tvt@$T_{\tilde{v}}$}, defined as the set of elements which act as the identity on $L\tilde{v}$.
\end{definition}
We have a short-exact sequence, the \textbf{decomposition-inertia exact sequence}
\begin{equation}\label{def:decompositioninertiaexactsequence}
1 \rightarrow T_{\tilde{v}}(L|F) \rightarrow D_{\tilde{v}}(L|F) \rightarrow \Gal(L\tilde{v}|Fv)\rightarrow 1.
\end{equation}
If $\tilde{v}_{1}, \tilde{v}_{2} \in \caX_{v}(L|F)$ and $\tilde{v}_{1} = \sigma \tilde{v}_{2}$ for some $\sigma \in \Gal(L|F)$, then
\[
D_{\tilde{v}_{1}}(L|F) = \sigma^{-1}D_{\tilde{v}_{2}}(L|F)\sigma\text{ and }T_{\tilde{v}_{1}}(L|F) = \sigma^{-1} T_{\tilde{v}_{2}}(L|F)\sigma.
\]
Thus, all $D_{\tilde{v}}(L|F)$ and $T_{\tilde{v}}(L|F)$, respectively, are conjugate for a given $v$, and when the lift is not important, we denote some element of the conjugacy class of subgroups by $D_v(L|F)$ \index{notation}{Dv@$D_{v}$} and $T_{v}(L|F)$,\index{notation}{Tv@$T_{v}$} respectively.  We define $D_v$ and $T_v$, respectively, to be $D_v(\overline{F}|F)$ and $T_v(\overline{F}|F)$.

For any Galois extension $L|F$, a valuation $v$ on $F$ and a valuation $w$ on $Fv$, we may choose $\tilde{v}\in \caX_{v}(L|F)$ and $\tilde{w}\in \caX_{w}(L\tilde{v}|Fv)$.  There is then a natural short exact sequence, the \textbf{composite inertia sequence}:
\begin{equation}\label{def:compositeinertiasequence}
1\rightarrow T_{\tilde{v}}(L|F)\rightarrow T_{\tilde{w}\circ \tilde{v}}(L|F)\rightarrow T_{\tilde{w}}(L\tilde{v}|Fv) \rightarrow 1,
\end{equation}
where $T_{w}\subseteq G_{Fv}$, for any composite of valuations.  Thus, if $T_{\tilde{v}}(L|F)$ is trivial, \[T_{\tilde{w}} \simeq T_{\tilde{w}\circ \tilde{v}}.\]

We will use three different types of fundamental groups, with the following notation:\index{notation}{pione@$\pi_{1}$}
\begin{enumerate}
\item $\pi_{1}^{\top}$ will denote the topological fundamental group, the fundamental group of a fiber functor on the category of topological covers of a topological space; covering space theory \cite{hatcher} shows that $\pi_{1}^{\top}$ can be computed using based homotopy classes of maps into $S^{1}$; this is the original fundamental group considered by Poincar\'e \cite{Poincare}.
\item $\hat{\pi}_{1}$ will denote the profinite completion of $\pi_{1}^{\top}$, the fundamental group of a fiber functor on the category of \textit{finite} topological covers of a topological space.
\item $\poe$ will denote the \'etale fundamental group.
\end{enumerate}
For a normal variety $X$ over an algebraically closed subfield of $\mathbb{C}$, one has an equivalence between the category of finite, \'etale covers of $X$ and the finite, unramified covers of $X^{\anal}$, its corresponding analytic space over $\mathbb{C}$, by \cite{GrauertRemmert}.  This leads immediately to the
\begin{theorem}[Comparison Theorem]\label{thm:fundamentalgroupscomparison}
Let $X$ be a normal variety over $\mathbb{C}$ and $x \in X(\mathbb{C}).$ There is a canonical isomorphism
\begin{equation}\label{def:comparisontheoremcanonicalisomorphism}
\hat{\pi}_1(X^{\anal}, x) \simeq \poe(X, x).
\end{equation}
\end{theorem}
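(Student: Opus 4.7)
The plan is to exhibit an equivalence of Galois categories whose fiber functors are canonically identified at $x$, and then invoke the Grothendieck-Galois formalism: an equivalence of Galois categories intertwining fiber functors induces a canonical isomorphism of their automorphism groups. I would set this up by recalling that $\poe(X,x) = \Aut(F_{x}^{\et})$, where $F_{x}^{\et}$ is the fiber functor on $\caEt(X)$, the category of finite \'etale covers of $X$, sending $(Y\to X)$ to the finite set $Y_{x}$. In parallel, $\hat{\pi}_{1}(X^{\anal},x) = \Aut(F_{x}^{\top})$, the automorphism group of the fiber functor on the category of finite topological covers of $X^{\anal}$; this reinterpretation of the profinite completion via a Galois category is the standard consequence of covering space theory, since finite topological covers of the connected, locally simply connected space $X^{\anal}$ correspond to finite-index subgroups of $\pi_{1}^{\top}(X^{\anal},x)$, and $\hat{\pi}_{1}(X^{\anal},x)$ is exactly the inverse limit over such finite quotients.

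The next step is to invoke the analytification functor $Y\mapsto Y^{\anal}$. A finite \'etale map $Y\to X$ analytifies to a finite analytic map $Y^{\anal}\to X^{\anal}$ of normal complex analytic spaces which is a local biholomorphism, hence a finite topological covering. Analytification thus defines a functor from $\caEt(X)$ to the category of finite topological covers of $X^{\anal}$, and since $(Y^{\anal})_{x} = Y_{x}$ as sets, it intertwines the fiber functors $F_{x}^{\et}$ and $F_{x}^{\top}$. The assertion of the theorem then reduces to showing that this analytification functor is an equivalence of categories, which is precisely the statement of Grauert-Remmert cited above.

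The main obstacle — really the only nontrivial content — is the Grauert-Remmert equivalence itself, i.e.\ that every finite topological covering of $X^{\anal}$ is the analytification of a unique (up to unique isomorphism) finite \'etale algebraic cover of $X$. For $X$ proper this is classical GAGA applied to the finite covering, which is automatically algebraic. For general normal $X$ of finite type over $\mathbb{C}$, one fixes a normal compactification $\overline{X}$ of $X$ with boundary $Z$, extends a finite unramified analytic cover of $X^{\anal}$ to a finite (possibly ramified) analytic cover of $\overline{X}^{\anal}$ by the Grauert-Remmert extension theorem for normal spaces, applies GAGA on $\overline{X}^{\anal}$ to obtain an algebraic cover of $\overline{X}$, and then restricts to $X$; uniqueness follows from normality. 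Once this equivalence is in hand, the Grothendieck-Galois formalism yields a canonical isomorphism $\hat{\pi}_{1}(X^{\anal},x)\simeq \poe(X,x)$ as desired.
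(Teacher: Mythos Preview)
Your proposal is correct and follows exactly the approach the paper indicates: the paper does not give a separate proof but simply notes, immediately before the theorem, that the Grauert--Remmert equivalence between finite \'etale covers of $X$ and finite unramified covers of $X^{\anal}$ ``leads immediately to'' the Comparison Theorem. You have merely spelled out that implication in full via the Galois-category formalism, which is precisely what the paper leaves implicit.
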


Every known computation of nonabelian fundamental groups of varieties factors through this comparison theorem; in characteristic $p$, for instance, this is combined with Grothendieck's specialization theorem \cite[X.2.4]{SGA1} to obtain information about fundamental groups.

Let now $K = \mathbb{C}$ and $F$ be a function field over $\mathbb{C}$.
Then we have the following interpretation of $D_v$ when $v$ is a prime divisor.  First, $v$ is the valuation associated to a Weil prime divisor on some normal model $X$ of $F$ --- that is, a normal variety with function field $F$, considered as a $\mathbb{C}$-scheme.  Given $X$, there is a corresponding normal analytic space $X^{\anal}$\index{notation}{Xan@$X^{\anal}$}.  Let $X$ be a model of $F$ on which $|v|$ is a prime divisor.
\begin{example}
The exceptional divisor $E$ on $\Bl_p(X)$, the blowup at some closed point $p$ of $X$, gives a prime divisor on $F$ but its center on $X$ is not codimension $1$ and so is not centered as a prime divisor on $X$.
\end{example}
Let $D'\subseteq X$ be the nonsingular locus of $|v|;$ notice that the underlying topological space of $D'$ is connected, as $v$ is a prime divisor.  \begin{definition}Let $\caN$ then be a normal disc bundle for (equivalently, a tubular neighborhood of) $D'$ and \[
\caT = \caN \setminus D'
\] the complement of $D'$ in its normal disc bundle $\caN$, which admits the \textbf{normal bundle fiber sequence}
\begin{equation}\label{not:normalbundlefibresequence}\mbox{\xymatrix{
1\ar[r] & \caF \ar[r]^-\iota & \caT \ar[r]^-\pi & D' \ar[r] &  1,
}}
\end{equation}
where $\caF$ is one of the fibers.  
\end{definition}
Let $p$ be a point on $\caF$.  Note that $\caF$, like all fibers of $\pi$, is a once-punctured disk and thus is homotopy equivalent to a circle.  There is a surjection
\begin{equation}
\rho: G_F \rightarrow \poe(X, p) \simeq \hat{\pi}_1(X^{\anal}, p)
\end{equation}
(proof: each normal \'etale cover of $X$ gives a normal extension of its field of functions) whose kernel we will define to have fixed field $F_X$, and the following commutative diagram:\\
\begin{equation}\label{ses:decompositioninertia}
\mbox{\xymatrix{
1 \ar[r] & \hat{\pi}_1(\caF, p) \ar[r]^-\iota \ar[dr] & \hat{\pi}_1(\caT, p) \ar[r]^-\pi\ar[d] & \hat{\pi}_1(D', \pi(p))\ar[r] & 1 \\
& & \poe(X, p). & & 
}}
\end{equation}
Then we have
\begin{proposition}[The Geometric Theory of Decomposition and Inertia Groups] \label{thm:geometrictheoryofdecompositionandinertia}
In the short exact sequence \ref{ses:decompositioninertia}:
\begin{enumerate}
\item The top row is a central extension of groups, as the normal bundle is complex-oriented.
\item The image of $\hat{\pi}_1(\caF, p)$ in $\poe(X, p)$ is a $T_v(F_X|F)$.
\item The image of $\hat{\pi}_1(\caT, p)$ in $\poe(X, p)$ is a $D_v(F_X|F)$.
\item $\hat{\pi}_1(D', \pi(p))$ is a quotient of $G_{Fv}$, corresponding to covers of $D'$ pulled back from covers of $X$.
\end{enumerate}
\end{proposition}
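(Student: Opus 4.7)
The plan is to work entirely in the analytic category via the comparison theorem (\autoref{thm:fundamentalgroupscomparison}) and then match the standard topological description of decomposition/inertia on $X^{\anal}$ with the algebraic one coming from the valuation $v$. Throughout I would fix a Galois subextension $L|F$ of $F_X|F$ corresponding to a finite étale Galois cover $Y\to X$ with group $H=\Gal(L|F)$; proving the statements for every such $Y$ and passing to the inverse limit yields the claim for $F_X$. The key transport mechanism is that, because $Y\to X$ is étale, the preimage $\pi^{-1}(D')\subseteq Y$ is a smooth divisor finite étale over $D'$, so its connected components $\{D'_i\}$ are in bijection with the $\hat\pi_1(D',\pi(p))$-orbits on the fibre; and, since $v$ has no transcendence defect, each $D'_i$ determines (as the generic point of a Weil prime divisor on a smooth model) a valuation $\tilde v_i$ of $L$ extending $v$, and every extension arises this way.

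For (1), I would argue purely topologically: the disc bundle $\caN\to D'$ is the unit disc bundle of the complex-analytic normal bundle of $D'$ in $X^{\anal}$, so its structure group reduces to $GL_1(\mathbb{C})=\mathbb{C}^\times$, which is path-connected. The monodromy action of $\hat\pi_1(D',\pi(p))$ on $\hat\pi_1(\caF,p)\simeq\widehat{\mathbb Z}$ therefore factors through the identity component and is trivial, so the top row of \ref{ses:decompositioninertia} is central.

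For (4), the smoothness of $D'$ together with the requirement that $|v|$ be a Weil prime divisor on $X$ means that $D'$ is an open subvariety of a smooth model of the residue field $Fv$. Hence $\hat\pi_1(D'^{\anal},\pi(p))\simeq \poe(D',\pi(p))$ is a quotient of $G_{Fv}$, and the covers of $D'$ producing this quotient are exactly those obtained by intersecting étale covers of $X$ with $D'$.

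For (2) and (3), which carry the weight of the proposition, I would proceed at the finite level $Y\to X$. A deformation retract sends $\caT$ to the circle bundle of $\caN$ over $D'$; its preimage in $Y$ decomposes as a disjoint union of punctured disc bundles, one over each component $D'_i\subseteq\pi^{-1}(D')$. Translating via $H$: an element $\sigma\in H$ fixes (the component containing) a chosen basepoint lift iff $\sigma$ preserves the corresponding $\tilde v_i$, i.e.\ lies in $D_{\tilde v_i}(L|F)$. Under the surjection $\rho: \hat\pi_1(X^{\anal},p)\twoheadrightarrow H$, the stabilizer of a basepoint lift is precisely the image of $\hat\pi_1(\caT,p)\to H$ (because $\caT$ is a connected neighborhood whose preimage's connected component containing the lifted basepoint deformation retracts to the chosen $D'_i$). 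This proves the decomposition statement. For inertia, the fact that $\pi$ restricts to a covering of $D'$ gives a diagram showing that the kernel of the further surjection $D_{\tilde v_i}(L|F)\to \Gal(L\tilde v_i|Fv)$ is exactly the image of $\hat\pi_1(\caF,p)$: by construction of $\caF$ as a punctured transverse disc, a cover of $X$ is unramified along $D'_i$ iff the loop around $D'_i$ lifts trivially, which matches the algebraic definition of $T_{\tilde v_i}$ as the kernel of the action on the residue field. Taking the inverse limit of $Y$ ranging over all finite Galois étale covers of $X$ identifies the images of $\hat\pi_1(\caF,p)$ and $\hat\pi_1(\caT,p)$ inside $\poe(X,p)\simeq\Gal(F_X|F)$ with $T_v(F_X|F)$ and $D_v(F_X|F)$, respectively.

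The main obstacle is the bijection between components of $\pi^{-1}(D')$ in an étale cover $Y\to X$ and extensions of $v$ to the function field of $Y$, together with the identification of stabilizers/kernels with decomposition/inertia. This is a standard valuation-theoretic/GAGA matter, but it must be set up carefully because $|v|$ itself need not be smooth; restricting to the smooth locus $D'$ (rather than $|v|$) is exactly what makes the correspondence clean and lets the complex-oriented local normal bundle do the work in (1).
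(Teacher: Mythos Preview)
The paper does not actually prove this proposition: it is stated without proof, consistent with the earlier remark that results from \cite{GAGA} and \cite{SGA1} are used ``with impunity,'' and the text moves directly on to the definition of meridians. Your outline is a correct and standard way to establish the result; in particular the reduction to finite Galois \'etale covers $Y\to X$, the bijection between components of the preimage of $D'$ and extensions $\tilde v$ of $v$ to $L$, and the identification of stabilizers (resp.\ kernels of the residue action) with $D_{\tilde v}$ (resp.\ $T_{\tilde v}$) is exactly how one proves this, and your argument for (1) via the path-connectedness of $\mathbb{C}^\times$ is the right one-line justification for centrality.
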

We will denote by $t_{v}$\index{notation}{tv@$t_{v}$} a generator of $\pi_{1}^{\top}(\caF, p) \subseteq \hat{\pi}_{1}(\caF, p),$ as well as any of its images in $\poe(X, p)$ or $T_{v}(F_{X}|F)$.
\begin{definition}\label{def:meridian}\index{definitions}{Meridian}
We refer to such a $t_{v}$ as a \textbf{meridian} of $v$.
\end{definition}
Each meridian is almost unique --- its inverse also gives a meridian of $v$, albeit ``in the opposite direction''.  This should be viewed as a ``loop normal to or around $|v|$''.  Its image generates $\hat{\pi}_{1}(\caF, p)$.  In general, if we are working in a situation in which we do not specify a basepoint, the meridian becomes defined only up to conjugacy.
\begin{definition}\label{not:abelianization} Let $\Gamma$ be a subgroup of a group $\Pi$.  Then the abelianization functor gives a map \begin{equation} \ab: \Gamma^{\ab}\rightarrow \pi_{1}^{\ab}.\end{equation}
We denote by $\Gamma^{a}$ the image of $\ab$\index{notation}{a@$^{a}$}.  In particular, given $v$ a valuation, and $\Pi$ a quotient of $G_{F}$ or $\pi_{1}^{\top}(X)$ for some model of $X$, we will denote by $T_{v}^{a}$ and $D_{v}^{a}$ the images of inertia and decomposition, respectively, in $\Pi^{\ab}$, which will sometimes appear in the sequel as $H_{1}$.  We let $t_{v}^{a}$ \index{notation}{tva@$t_{v}^{a}$} be the image of a meridian in $\Pi^{\ab}$.
\end{definition} 

We can also define the meridian of a valuation $v$ on a model $X$ if $|v|$ is smooth, and extend the definition to non-smooth $|v|$ as follows.  We resolve the singularities of $|v|$ on $X$ to get a birational map
\[
\eta: \tilde{X}\rightarrow X
\]
such that
\begin{enumerate}
\item $\eta$ is an isomorphism outside of $|v|$.
\item $|v|\subseteq \tilde{X}$ is smooth.
\end{enumerate}
Then we define a meridian $t_{v}$ on $X$ to be
\begin{equation}\label{def:meridian2}
t_{v} = \eta_{*}(t_{v})
\end{equation}
where $t_{v}$ is a meridian on $\tilde{X}$.  To see this is well-defined, if we have two such maps $\eta, \eta'$ as in the following diagram:
\begin{equation}
\mbox{\xymatrix{
& \tilde{X}'' \ar@{-->}[dl]_{\varphi} \ar@{-->}[dr]^{\varphi'} & \\
\tilde{X} \ar[dr]^{\eta} & & \tilde{X}' \ar[dl]_{\eta'} \\
& X, & }}
\end{equation}
we may always construct $\varphi$ and $\varphi'$ birational morphisms so that:
\begin{enumerate}
\item The above diagram commutes.
\item $\varphi, \varphi', \eta \circ \varphi, \eta'\circ \varphi'$ are isomorphisms outside of $|v|$.
\item $|v|$ is smooth in $\tilde{X}''$.
\end{enumerate}
In this case, the meridians in $X$ defined by $\eta'$ and $\eta$ are the image of a meridian in $\tilde{X}''$ under $\eta \circ \varphi$ and $\eta' \circ \varphi'$ so, by commutativity of the diagram, the two meridians are the same.

There is also the notion of a meridian for a higher-rank Parshin chain; we give here the notion for a rank-$2$ Parshin chain.
\begin{definition}\label{def:meridianrank2} Let $L$ be an algebraic extension of $F$, $w\circ v$ a rank-$2$ Parshin chain on $F$, and $X$ a model of $F$ on which $v$ has a center, but on which $w\circ v$ is not centered.  Then the \textbf{meridian of the rank-$2$ Parshin chain} $t_{w\circ v}$\index{notation}{twv@$t_{w\circ v}$} for $L|F$ is the element of $\Aut(L|F)$ induced by the inverse limit of the monodromies of a loop on $|v|$ around the point induced by $w$ on the normalization of $|v|$.  As in \autoref{not:abelianization}, any image in an abelianization will be denoted $t_{w\circ v}^{a}$\index{notation}{twva@$t_{w\circ v}^{a}$}.
\end{definition}
Let $F$ have dimension $n$.  Then if $v$ is an $n-1$-dimensional Parshin chain, $Fv$ is the function field of a curve over $K$.  $Fv$ is equipped with a fundamental, birational invariant: its \textbf{unramified genus} $g(v)$\index{definitions}{Unramified genus}\index{notation}{gv@$g(v)$}.  We can compute this as follows:
\[
g(v) = \rk_{\hat{\mathbb{Z}}}Dv^{a}/\langle Tv^{a}, T_{p}^{a}\rangle_{p \in \Par_{n}(v)}.
\]
\section{Geometric Sets and the Maximal Smooth Model}\label{models}
\begin{definition}\label{def:geometricset} \index{definitions}{Geometric set}We say that a set $\caS$ of prime divisors of a function field $F$ is a \textbf{geometric set (of $F$)} if and only if there exists a normal model $X$ of $F$ such that $\caS$ is precisely the set of valuations with centers Weil prime divisors on $X$.  In this case, we write 
\[
\caS = \caD(X).
\]
\index{definitions}{Model}If $X$ is smooth, we say $X$ is a \textbf{model} of $\caS$.
\end{definition}
\begin{theorem}[Pop]\label{ValuationsRecipe}
If $F$ is a function field with $K(F) = \overline{\mathbb{Q}}$, $\trdeg_{\overline{\mathbb{Q}}} F \geq 2$, and let $\Gamma \subseteq G_{F}$ be a closed subgroup, up to conjugacy.  Then there is a topological group-theoretic criterion, given one of the representatives of $\Gamma$ to determine whether there exists $i$ and $v \in \Par_{i}(F)$ such that $\Gamma = T_{v}$ or $\Gamma = D_{v}$, and what this $i$ is if it exists.
\end{theorem}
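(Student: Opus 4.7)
The plan is to build the recipe inductively on the rank $i$, starting from rank-$1$ (prime divisors) and then leveraging the composite inertia sequence \ref{def:compositeinertiasequence} to handle higher ranks. The rank-$1$ case is the essential content; decomposition groups are obtained afterwards as certain normalizers, and everything above rank $1$ follows by iteration.

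For rank-$1$ inertia detection, I would look for the following group-theoretic profile of a candidate $\Gamma \subseteq G_F$. First, since $K(F) = \overline{\mathbb{Q}}$ contains all roots of unity, inertia of a prime divisor is an abelian pro-cyclic group, in fact a Tate twist $\hat{\mathbb{Z}}(1)$ as a $G_F$-module via the residual action. Second, the normalizer $N_{G_F}(\Gamma)$ should fit into the decomposition-inertia sequence \ref{def:decompositioninertiaexactsequence}, with quotient $N_{G_F}(\Gamma)/\Gamma$ having the profinite profile of an absolute Galois group of a function field of transcendence degree $\trdeg_{\overline{\mathbb{Q}}} F - 1$ over $\overline{\mathbb{Q}}$. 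Third, a maximality condition, formulated so that the associated valuation has no transcendence defect, singling out \emph{prime} divisors rather than arbitrary rank-$1$ valuations. These three properties together are meant to be the recipe: the criterion says $\Gamma = T_v$ for some prime divisor $v$ exactly when it satisfies (a), (b), (c).

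Once rank-$1$ inertia is characterized, the rank-$1$ decomposition group $D_v$ is identified inside $N_{G_F}(T_v)$ as a maximal closed subgroup containing $T_v$ whose quotient $D_v/T_v$ realizes the target copy of $G_{Fv}$ via the recursive ``function-field Galois group'' profile. For higher-rank Parshin chains, the composite inertia sequence \ref{def:compositeinertiasequence} gives the inductive step: a rank-$i$ Parshin chain is a composite $w \circ v$ with $v$ of rank $i-1$ and $w$ a prime divisor on $Fv$, and its inertia/decomposition group is an extension of the inertia/decomposition of $w$ (sitting in $G_{Fv} \simeq D_v/T_v$) by the inertia $T_v$ of $v$. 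Thus $\Gamma$ is a rank-$i$ inertia group iff there is a filtration $1 \subseteq T_v \subseteq \Gamma$ with $T_v$ a rank-$1$ inertia by the base case, and with the image of $\Gamma$ in $D_v/T_v \simeq G_{Fv}$ a rank-$(i-1)$ inertia by induction applied to this smaller absolute Galois group; the decomposition-group case is analogous via nested normalizers.

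The main obstacle is the rank-$1$ characterization, and specifically the group-theoretic detection of ``no transcendence defect.'' Many pathological valuations --- for example, Abhyankar valuations of rank $>1$, or rank-$1$ valuations with value group $\mathbb{Q}$ --- give pro-cyclic or pro-abelian inertia subgroups whose normalizers have seductively small quotients, and one must rule these out purely from the profinite structure. The key technical input is a profinite-group invariant that recovers the transcendence degree: Pop's identification of the cohomological dimension of $G_E$, for $E$ a function field over $\overline{\mathbb{Q}}$, with $\trdeg_{\overline{\mathbb{Q}}} E + 1$, combined with Kummer theory and the structure of $H^1(G_E,\mathbb{Z}/\ell)$ as a Galois module. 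This makes the transcendence degree, and thus the ``no transcendence defect'' condition, a profinite invariant of $G_F$ and allows the maximality clause (c) to be stated group-theoretically. Packaging (a), (b), (c) carefully, and then verifying that the inductive step above really carves out the Parshin-chain inertia groups and nothing else, completes the recipe.
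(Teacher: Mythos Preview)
The paper does not actually prove this theorem: it is attributed to Pop, and the text immediately following the statement simply points to \cite{PopBog2} for the pro-$\ell$ version and to Key Lemma~5.1 of \cite{PopIOM} for the passage to the full $G_F$. There is thus no in-paper argument to compare your sketch against, only a citation.

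That said, your large-scale architecture --- settle rank $1$, then induct up the composite inertia sequence \ref{def:compositeinertiasequence} --- is correct and is how the argument is organized in Pop's papers. The rank-$1$ recipe you propose, however, has genuine problems. A minor one: for a function field $E$ over $\overline{\mathbb{Q}}$ one has $\operatorname{cd}(G_E) = \trdeg_{\overline{\mathbb{Q}}} E$, not $\trdeg_{\overline{\mathbb{Q}}} E + 1$. A more serious one: your condition (b) asks that $N_{G_F}(\Gamma)/\Gamma$ ``have the profinite profile of'' the absolute Galois group of a function field of one lower transcendence degree, but recognizing that profile is precisely the sort of thing the recipe is meant to produce, so invoking it here is circular. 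Relatedly, for a prime divisor $v$ one has $D_v = N_{G_F}(T_v)$, so your description of $D_v$ as a proper ``maximal closed subgroup'' inside the normalizer is off.

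Pop's actual rank-$1$ criterion in \cite{PopBog2} looks quite different from your (a)--(c). It is formulated in the pro-$\ell$ abelian-by-central quotient of $G_F$ and uses the machinery of commuting-liftable pairs: one singles out those procyclic subgroups of the abelianization whose commuting partners lift to commuting elements one step up the central series. This avoids the circularity above because it never needs to recognize $G_{Fv}$ as an absolute Galois group. Once that is in place, your inductive step for higher-rank Parshin chains is essentially how the argument continues.
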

This theorem is proven with $G_{F}$ replaced by the pro-$\ell$ completion of $G_{F}$ in \cite{PopBog2}.  To see this for $G_{F}$ as a whole, we may apply Key Lemma 5.1 of \cite{PopIOM}.  As the maximal length of a Parshin chain is the transcendence degree of $F$, this recipe immediately determines the transcendence degree of $F$.  Pop also proved \cite{PopIOM}:
\begin{theorem}[Pop]\label{theorem:GeometricSetsRecipe}
Given a geometric set $\caS$ of prime divisors on $F$,
\begin{enumerate}
\item If $\caS$ is a geometric set of prime divisors on $F$, then a (possibly different) set $\caS'$ of prime divisors on $F$ is a geometric set if and only if it has finite symmetric difference with $\caS$.
\item There exists a group-theoretic recipe to recover\index{notation}{GeomF@$\caGeom(F)$}
\[
\caGeom(F) = \left\{ \left\{(T_v, D_v) \mid v \in \caS\right\} \mid \caS\text{ a geometric set}\right\}.
\]
\end{enumerate}
\end{theorem}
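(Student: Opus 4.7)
The plan is to prove the two assertions in turn, using \autoref{ValuationsRecipe} to freely identify inertia and decomposition subgroups of prime divisors from the profinite group theory of $G_{F}$, and relying on standard birational geometry for the geometric input needed in (1).

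For (1), I would first use that any two normal models $X, X'$ of $F$ admit a common dominating normal model $Y$, obtained by normalizing the closure of the graph of the induced birational map $X\dashrightarrow X'$ inside $X\times_{K(F)} X'$. The two projections $Y\to X$ and $Y\to X'$ are proper birational morphisms of normal varieties, and by Zariski's main theorem each has an exceptional locus containing only finitely many codimension-one components; hence $\caD(Y)$ differs from each of $\caD(X)$ and $\caD(X')$ in only finitely many elements, so $\caD(X)\triangle \caD(X')$ is finite. Conversely, given a geometric set $\caS = \caD(X)$ and a finite modification $\caS' = \caS\triangle E$, one adjoins the divisors in $E\setminus \caS$ by blowing up their centers on a suitable birational model and passing to a normalization in which each becomes prime, while removing those in $E\cap \caS$ by restricting to the Zariski open complement of their supports. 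Both operations produce new normal models, making $\caS'$ geometric.

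For (2), \autoref{ValuationsRecipe} already gives a recipe for $\Par_{i}(F)$ together with all pairs $(T_{v}, D_{v})$, and for the prolongation relation ``$w\in \Par_{i}(v)$'' (recoverable from inertia containments $T_{v}\subseteq T_{w}$ up to conjugacy via the composite inertia sequence \ref{def:compositeinertiasequence}). By (1) it suffices to detect group-theoretically whether a single subset $\caS\subseteq \Par_{1}(F)$ is geometric; all of $\caGeom(F)$ is then its finite-symmetric-difference orbit. The criterion I would formulate imposes a local-finiteness axiom reflecting the geometric fact that on a normal model only finitely many prime divisors pass through any codimension-$\geq 2$ point: for every rank-$i$ Parshin chain $w$ with $i\geq 2$, only finitely many $v\in \caS$ admit $w$ as a prolongation, together with a maximality condition forbidding the addition of any further $v_{0}\notin \caS$ without either violating local finiteness at some higher Parshin chain or producing a $(T_{v_{0}}, D_{v_{0}})$ incompatible with the existing prolongation data on some rank-$2$ chain.

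The principal obstacle is establishing \emph{sufficiency} of this criterion: from a set $\caS$ merely satisfying the axiom, one must produce an honest normal model of $F$ whose Weil prime divisors are exactly $\caS$. The natural route is through the Riemann--Zariski theory: the subring $\bigcap_{v\in \caS}\caO_{v}\subseteq F$ should, under the axiom, carry the structure of an honest scheme of finite type over $K(F)$ (after localizing away from a controlled bad locus), with the centers of the $v\in \caS$ furnishing its prime divisors. Verifying the finite-type property is the technical heart; this is precisely where Pop's argument in \cite{PopIOM} does the main work, and the group-theoretic recipe emerges as a shadow of that construction once \autoref{ValuationsRecipe} is used to translate between Parshin-chain inertia subgroups and the valuation-theoretic data.
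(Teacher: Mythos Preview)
The paper does not prove this theorem: it is stated as a result of Pop, with the sentence ``Pop also proved \cite{PopIOM}:'' immediately preceding it, and no argument is supplied. So there is no proof in the paper to compare your proposal against.

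On the substance of what you wrote: your argument for (1) is essentially correct, but the roof $Y$ you build need not have proper projections to $X$ and $X'$ when those models are not themselves proper (the projection $X\times_{K(F)} X'\to X$ is proper only if $X'$ is). The fix is easy: first replace $X,X'$ by proper normal compactifications, which adds only finitely many boundary prime divisors to each $\caD$, and then run your graph-closure construction. The converse is fine once you observe that any prime divisor becomes a Weil prime divisor on \emph{some} proper normal model (center it on a proper model and blow up until the center has codimension one), after which you dominate all the models in play by a single proper normal $Z$ and pass to the open complement of the finitely many unwanted divisors.

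For (2), your proposed local-finiteness axiom is not well-posed as written. A rank-$2$ Parshin chain $w=p\circ v$ determines its rank-$1$ truncation $v$ uniquely, so the condition ``only finitely many $v\in\caS$ admit $w$ as a prolongation'' is satisfied trivially by every $\caS$. What one actually needs is a condition expressing that only finitely many divisors in $\caS$ pass through a given codimension-$2$ center, and formulating that purely in terms of inertia and decomposition data is already part of the content of Pop's recipe rather than an easy preliminary. You are right that the genuine difficulty is the sufficiency direction---producing a finite-type normal model from the axioms---and you defer to \cite{PopIOM} for it; since the paper itself does exactly the same, your treatment is no less complete than what appears here.
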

\begin{definition}\label{def:geometricsetfundamentalgroup}\index{definitions}{Fundamental group of a geometric set}
If $\caS$ is any set of prime divisors on $F$, we define the \textbf{fundamental group of }$\caS$ to be:\index{notation}{PiS@$\Pi_{\caS}$}
\[
\Pi_{\caS} = G_{F}/\langle T_{v}\rangle_{v\in \caS}.
\]
Here, $\langle T_{v}\rangle_{v\in \caS}$ is the smallest closed, normal subgroup of $G_{F}$ which contains every element in every conjugacy class in each $T_{v}$.  If $\caT\subseteq \caS$ is a subset, then we denote by\index{notation}{rhoTS@$\rho_{\caT\caS}$}
\[
\rho_{\caT\caS}: \Pi_{\caT}\rightarrow \Pi_{\caS}
\]
the restriction map, and drop subscripts when they are unambiguous.
\end{definition}
Given a geometric set $\caS$, there are many possible $X$ such that $\caD(X) = \caS$; for instance, any model less a finite subset of points has the same set of prime divisors. We now define the maximal model on which we will be able to effect our intersection theory.
\begin{theorem}\label{thm:maximalsmoothmodels}
\index{definitions}{Model!Maximal smooth}Let $\caS$ be a geometric set for a function field $F$ of dimension $2$.  There exists a \textit{unique} model $\caM(\caS)$ \index{notation}{MS@$\caM(\caS)$}of $F$ such that the following holds:
\begin{enumerate}
\item $\caD(\caM(\caS)) = \caS$.
\item $\caM(\caS)$ is smooth.
\item \label{equivalenceoffundamentalgroups}$\hat{\pi}_{1}(\caM^{\anal}, p) \simeq \poe(\caM(\caS), p) \simeq \Pi_{\caS}$.
\item \label{maximalmodeluniversalproperty} If $X$ is any other smooth model of $F$ which satisfies $\caS = \caD(X)$, then there exists a unique $F$-morphism $X\rightarrow \caM(\caS)$, and this is a smooth embedding.
\end{enumerate}
\end{theorem}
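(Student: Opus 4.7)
The plan is to realize $\caM(\caS)$ as the directed colimit of the poset $\caF$ of smooth models $X$ of $F$ satisfying $\caD(X) = \caS$, with morphisms the $F$-scheme open immersions. Once this colimit is shown to be representable by a scheme of finite type, properties (1)--(4) and uniqueness all fall out with minor effort; that finite-type step is where the main work lies.

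First I would verify that $\caF$ admits pairwise joins. Given $X_1, X_2 \in \caF$, the canonical birational map $\phi \colon X_1 \dashrightarrow X_2$ can contract no divisor, because every prime divisor of $X_1$ is a prime divisor of $X_2$ (both being elements of $\caS$); the same holds for $\phi^{-1}$. Since $X_1$ and $X_2$ are smooth surfaces, the indeterminacy of $\phi$ has codimension at least $2$, hence is a finite set of closed points; on the complement $\phi$ is a birational quasi-finite morphism to a normal scheme, hence an open immersion by Zariski's Main Theorem. Thus $X_1$ and $X_2$ share a common open $U$ with finite complement in each, and the gluing $X_1 \cup_U X_2$ again lies in $\caF$ (smoothness is local, the divisor set is preserved, and finite type is preserved under gluing along an open).

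The main obstacle is showing that $\caM(\caS) := \varinjlim_{X \in \caF} X$ is of finite type. To control the colimit, I fix once a smooth projective model $\bar X$ of $F$ with $\caD(\bar X) \supseteq \caS$, produced by Nagata's compactification followed by Hironaka's resolution (available in characteristic zero). By \autoref{theorem:GeometricSetsRecipe}, $\caS_0 := \caD(\bar X)\setminus \caS$ is finite; write $D := \bigcup_{v\in \caS_0} |v|$. The previous paragraph's Zariski Main Theorem argument shows every $X \in \caF$ embeds, outside a finite set of closed points, as an open subscheme of $\bar X \setminus D$, and the ``extra'' closed points of $X$ that lie in the indeterminacy correspond to rank-$2$ Parshin chains $w\circ v_0$ with $v_0 \in \caS$ whose centers on $\bar X$ land in $D$. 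To corral these, I pass to a minimal model: iteratively contract any $(-1)$-curve lying in $\caS_0$. Castelnuovo's criterion preserves smoothness, $\caS$ is untouched because only curves in $\caS_0$ are contracted, and the Picard rank strictly drops at each step, so the process terminates at a model $\bar X_{\min}$. The key claim to verify — this is the crux — is that every rank-$2$ valuation realizable as a smooth closed point of some $X \in \caF$ has a center on $\bar X_{\min}$ that already lies in $\bar X_{\min}\setminus D_{\min}$; the argument uses the factorization of birational maps of smooth surfaces into blowups combined with the rigidity imposed by $\caD(X) = \caS$ to rule out spurious unrealized points. Setting $\caM(\caS) := \bar X_{\min} \setminus D_{\min}$ then yields a finite-type scheme representing the colimit.

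The remaining verifications are brief. Property (1) holds because $\caD(\bar X_{\min}\setminus D_{\min}) = \caD(\bar X_{\min}) \setminus \caS_0^{\min} = \caS$. Property (2) is immediate since $\caM(\caS)$ is an open subscheme of the smooth $\bar X_{\min}$. Property (3) follows from the Comparison Theorem (\autoref{thm:fundamentalgroupscomparison}) together with the standard identification of $\poe$ of a smooth variety as $G_F$ modulo the closed normal subgroup generated by the inertia of its prime divisors, which by \autoref{def:geometricsetfundamentalgroup} is precisely $\Pi_{\caS}$. Property (4), the universal property, is built into the definition of $\caM(\caS)$ as a colimit; uniqueness of $\caM(\caS)$ follows formally from this universal property.
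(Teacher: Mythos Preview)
Your approach is essentially the paper's, repackaged in the language of directed colimits: both construct $\caM(\caS)$ by taking a smooth compactification, iteratively contracting $(-1)$-curves in the boundary, and removing what remains, and both reduce the universal property to factorization of birational surface maps into blow-ups. The ``key claim'' you flag as the crux is exactly what the paper proves via the strong-factorization roof $Y \to X_n$, $Y \to X'_{n'}$, arguing that exceptional fibers over points of $U_{\max}$ must land in the boundary of the other model; once that is in hand your preliminary join construction (whose separatedness you have not verified) becomes unnecessary, since one sees directly that $\bar X_{\min}\setminus D_{\min}$ dominates every member of $\caF$.
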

\begin{proof}
Let $U$ be a model of $\caS$, and let $X$ be a smooth compactification of $U$.  Let \[\partial = \caD(X)\setminus \caD(U)\]
be the collection of field-theoretic prime divisors in the boundary of $U$ in $X$.  This is a finite set, as the boundary divisor is itself a finite union of prime divisors.  We now define a sequence of pairs $(X_{i}, \partial_{i})$ of varieties $X_{i}$ and finite sets of divisors $\partial_i \subset \caD(X)$ inductively as follows:
\begin{enumerate}
\item Let $X_{1} = X, \partial_{1} = \partial$.
\item We now construct $(X_{i+1}, \partial_{i+1})$ from $(X_i, \partial_i)$.  First, take the collection $\{v_j\} \subseteq \partial_i$ such that each $|v_j|$ is a $-1$-curve such that no other $|v'|$ that intersects it in the boundary is a $-1$-curve, and blow down. Set $X_{i+1}$ to be this blowdown, and $\partial_{i+1} = \partial_{i}\setminus \{v_j\}$.
\end{enumerate}
As $\partial_{1}$ is finite, at some point, this sequence becomes stationary --- let's say at $(X_n, \partial_{n})$.  Then we define \begin{equation}\label{not:Umax} U_{\max} = X_{n} \setminus \bigcup_{v \in \partial_{n}} |v|.\end{equation}  To prove that this satisfies property~\ref{maximalmodeluniversalproperty}, let $U'$ be another model and $X'$ a smooth compactification of it.  Run the algorithm on $X'$ to get a pair $(X'_{n'}, \partial'_{n'})$ Then by strong factorization for surfaces (see Corollary 1-8-4, \cite{MoriProgram}), there exists a roof
\begin{center}
\mbox{\xymatrix{
& Y \ar[dl]_{\delta} \ar[dr]^{\delta'} & \\
X'_{n'} & & X_{n},
}}
\end{center}
where $\delta$ and $\delta'$ are both sequences of blowups.  For any morphism \[
\varphi: Z\rightarrow Z'
\]
 of varieties we may define the \textbf{exceptional locus}
\[
\caE(\varphi) = \left\{p \in Z' \mid \dim(\varphi^{-1}(p)) \geq 1\right\}
\]
Let $p \in \caE(\delta')\cap U_{\max}$.  Then $\delta'^{-1}(p)$ is connected, so $\delta(\delta'^{-1}(p))$ is also connected.  It is proper, so is either a union of divisors or a point. If it is a union of divisors and one of these divisors were in $U'_{\max}$, this divisor would be contracted, so $U_{\max}$ and $U'_{\max}$ would not have the same codimension $1$ theory.  We may argue the same way for $\delta$.  Thus, $\delta\circ \delta'^{-1}|_{U_{\max} \setminus \caE(\delta')}$ is well-defined and regular outside codimension $2$, so extends to a morphism \[\delta\circ\delta'^{-1}: U_{\max}\rightarrow U'_{\max},\] injective on closed points.  By the same argument we may produce the inverse \[\delta'\circ \delta^{-1}: U'_{\max}\rightarrow U_{\max},\] so we have that the maximal smooth model is indeed unique up to isomorphism.  

To prove property \ref{equivalenceoffundamentalgroups}, we note that the map
\[
U\rightarrow U_{\max}
\]
gives a natural equivalence of the category of \'etale covers of $U$ with the category of \'etale covers of $U_{\max}$, by the Nagata-Zariski purity theorem \cite[X.3.4]{SGA2}, and thus gives an isomorphism on fundamental groups by \autoref{thm:fundamentalgroupscomparison}.

\end{proof}
\begin{corollary}
Let $U$ be an affine or projective smooth variety with function field $F$.  Then $U = \caM(\caD(U))$.
\end{corollary}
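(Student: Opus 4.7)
The plan is to split into the two cases and use \autoref{thm:maximalsmoothmodels} differently in each. When $U$ is projective, $U$ is already its own smooth compactification, so feeding the pair $(U,U)$ into the construction of $\caM(\caD(U))$ yields $\partial_{1} = \caD(U)\setminus \caD(U) = \emptyset$; the inductive algorithm halts immediately at $(X_{1},\partial_{1}) = (U,\emptyset)$, and hence $U_{\max} = U$.

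For the affine case, I would invoke property~(\ref{maximalmodeluniversalproperty}) of the theorem to obtain the unique smooth $F$-embedding $j: U \hookrightarrow \caM(\caD(U))$. The key observation is that the complement $Z = \caM(\caD(U))\setminus j(U)$ has codimension at least $2$: if $Z$ contained a prime divisor $|v|$, then $v \in \caD(\caM(\caD(U))) = \caD(U)$, forcing $|v|$ to lie inside $j(U)$, which is absurd.

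Setting $A = \Gamma(U,\caO_{U})$, I would use normality (indeed smoothness) of $\caM(\caD(U))$ together with Hartogs' extension theorem to conclude $\Gamma(\caM(\caD(U)),\caO) = A$, since every regular function on $j(U)$ extends uniquely across the codimension-$\geq 2$ locus $Z$. The affineness of $U$ then produces a canonical morphism $\pi: \caM(\caD(U)) \to \Spec A = U$, induced by this identification of global sections.

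Finally, I would check that $\pi\circ j = \operatorname{id}_{U}$ (automatic on global sections, hence on the affine scheme $U$) and observe that $j\circ \pi$ agrees with $\operatorname{id}_{\caM(\caD(U))}$ on the dense open $j(U)$; by separatedness of $\caM(\caD(U))$, the equalizer of these two morphisms is closed and contains a dense open, so $j\circ \pi = \operatorname{id}_{\caM(\caD(U))}$. This forces the open immersion $j$ to be surjective, hence an isomorphism, giving $U = \caM(\caD(U))$. The only subtle point is the Hartogs-type extension across $Z$; the rest is formal manipulation of the universal property.
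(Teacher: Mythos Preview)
Your projective case is identical to the paper's. For the affine case your argument is correct but takes a different route from the paper. The paper argues locally: supposing a closed point $x\in \caM(\caD(U))\setminus U$, it chooses an affine neighborhood $U'\ni x$ inside $\caM(\caD(U))$; since $\caM(\caD(U))$ is separated, $U\cap U'$ is affine, and the complement of an affine open in an affine variety has pure codimension~$1$, so a prime divisor through $x$ lies in $\caM(\caD(U))$ but not in $U$, contradicting $\caD(U)=\caD(\caM(\caD(U)))$. You instead argue globally: from the same codimension-$\geq 2$ observation you invoke algebraic Hartogs (normality of $\caM(\caD(U))$) to identify $\Gamma(\caM(\caD(U)),\caO)$ with $A=\Gamma(U,\caO_U)$, obtain the canonical morphism $\pi:\caM(\caD(U))\to\Spec A=U$, and use separatedness to promote the identity $\pi\circ j=\operatorname{id}_U$ to $j\circ\pi=\operatorname{id}$. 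Both are short and valid; the paper's approach is a touch more elementary (it only needs the pure-codimension-$1$ fact for affine complements), while yours is pleasantly functorial and would work verbatim for any smooth model $U$ with $\Gamma(U,\caO_U)$ finitely generated and $\Spec\Gamma(U,\caO_U)\simeq U$.
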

\begin{proof}
If $U$ is proper, the algorithm in the proof of \autoref{thm:maximalsmoothmodels} terminates immediately, so $U_{\max} = U$.

If $U$ is affine, let $\iota: U\rightarrow U_{\max}$ be the embedding of $U$ as an affine open of $U_{\max}$.  Assume there were a closed point $x\in U_{\max}\setminus U$.  Then there is an affine neighborhood $U'\subset U_{\max}$ such that $x \in U'$.  Then $U'\cap U$ is affine.  Its complement $U' \setminus (U'\cap U)$ must then contain a divisor, which contains $x$, and is not contained in $U$.  Thus, $U_{\max}$ has a different codimension-$1$ theory from $U$, which gives a contradiction.
\end{proof}

Now, let $F$ be a function field over $\overline{\mathbb{Q}}$, and fix an embedding of $\overline{\mathbb{Q}}$ into $\mathbb{C}$.  Then if $\caX_{F}$ is the inverse system of all smooth models of $F$,
\begin{equation}\label{GGInvLimofPi1}
G_{F} \simeq \lim_{X \in \caX_{F}} \hat{\pi}_{1}((X\times_{\overline{\mathbb{Q}}}\Spec\mathbb{C})^{\anal}), 
\end{equation}
where we will leave the notion of basepoint ambiguous (as we never need to specify it); the isomorphism \ref{GGInvLimofPi1} is highly noncanonical, but well-defined up to conjugation.
\section{The Local Theory: The Intersection Theorem}
The following theorem shows how the fundamental group detects intersections in the best-case scenario.
\begin{theorem}[The Local Anabelian Intersection Theorem]\label{thm:localanabelianintersectiontheorem}
Let $X$ be a smooth (not necessarily proper) surface over $\mathbb{C}$, and let $C_1$ be a unibranch germ of an algebraic curve at a point $p\in X$ and $C_2$ an irreducible, reduced algebraic curve on $X$ (a prime divisor), with distinct branches $\gamma_j$ at $p$, with $C_1$ distinct from each branch of $C_2$.  Let $Y = X \setminus C_2$ be the complement of $C_2$ in $X$; this is an open subvariety of $X$.  Let $w\circ v_{1}$ be the rank-two Parshin chain corresponding to $C_1$ at $p$, and let $v_{2}$ be the Weil prime divisor corresponding to $C_2$.  Then, we may choose meridians $t_{v_{2}}^{a}$ and $t_{w\circ v_{1}}^{a}$ in $\poe(X\setminus C_{2})^{\ab}$ 
\begin{equation}
t_{w\circ v_{1}}^{a} = i(p, C_1\cdot C_2; X)\,t_{v_{2}}^{a},
\end{equation}
where $i(p, C_{1}\cdot C_{2}; X)$\index{definitions}{Local intersection number}\index{notation}{intp@$i(p, C_{1}\cdot C_{2}; X)$} is the local intersection number as defined in Fulton \cite[Definition 7.1]{IntersectionTheory}. Then 
\begin{equation}
[T_{v_{2}}(F_{X\setminus C_2}|F)^{a}: T_{w\circ v_{1}}(F_{X\setminus C_2}|F)^{a}] | i(p, C_1\cdot C_2; X)
\end{equation}
with equality iff $i(p, C_1\cdot C_2; X)|\left|T_{v_{2}}(F_{X\setminus C_2}|F)^{a}\right|$ or $T_{v_{2}}(F_{X\setminus C_2}|F)^{a}$ or $T_{w\circ v_{1}}(F_{X\setminus C_2}|F)^{a}$ are infinite in $\poe(X\setminus C_2)^{\ab}.$
\end{theorem}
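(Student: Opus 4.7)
\medskip

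\textbf{Proof plan.} The plan is to reduce everything to a local analytic linking--number computation near $p$, then deduce the inertia inclusion and its index from the composite inertia sequence \ref{def:compositeinertiasequence}. By \autoref{thm:fundamentalgroupscomparison} I may work inside $H_{1}((X\setminus C_{2})^{\anal};\ZZ)$, profinitely completed at the end, so that meridians become genuine topological loops and the two displayed identities are classical statements in the singular homology of a complex surface.

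\medskip

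First I fix an analytic polydisc $\Delta\subset X^{\anal}$ around $p$ small enough that $C_{2}\cap\Delta=\bigcup_{j}\gamma_{j}$ and that $C_{1}\cap\Delta$ is the image of a local parametrisation $\varphi\colon \mathbb{D}\to\Delta$ with $\varphi(0)=p$; such a $\varphi$ exists because $C_{1}$ is a unibranch germ, and, by the hypothesis that $C_{1}$ differs from every $\gamma_{j}$, I may shrink $\mathbb{D}$ so that $\varphi^{-1}(C_{2})=\{0\}$. The meridian $t_{w\circ v_{1}}$ is by \autoref{def:meridianrank2} represented by the loop $\varphi\big(\partial\mathbb{D}(0,\varepsilon)\big)$ for small $\varepsilon>0$, while $t_{v_{2}}$ is represented by a small loop normal to a smooth point of $C_{2}$ as in \autoref{def:meridian}. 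Both loops lie in $X\setminus C_{2}$.

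\medskip

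The main computation is then the following. The $2$-chain $D\eqdef\varphi\big(\overline{\mathbb{D}(0,\varepsilon)}\big)$ is a (possibly singular) analytic disc in $X$ with boundary $\partial D=\varphi(\partial\mathbb{D}(0,\varepsilon))$, and $D\cap C_{2}=\{p\}$. In the Thom--Gysin sequence for the pair $(X, X\setminus C_{2})$
\[
H_{2}(X)\longrightarrow H_{2}(X,X\setminus C_{2})\overset{\partial}{\longrightarrow} H_{1}(X\setminus C_{2})\longrightarrow H_{1}(X)\longrightarrow 0,
\]
the generator of $H_{2}(X,X\setminus C_{2})\cong\ZZ$ (Thom class of the irreducible divisor $C_{2}$) has image $t_{v_{2}}^{a}$. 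The class of $D$ in $H_{2}(X,X\setminus C_{2})$ is computed by the local intersection multiplicity at the unique point $p\in D\cap C_{2}$: if $f\in\caO_{X,p}$ is a local equation for $C_{2}$, then $f\circ\varphi\in\caO_{\mathbb{D},0}$ vanishes to order $\operatorname{ord}_{0}(f\circ\varphi)=\dim_{\CC}\caO_{X,p}/(f,g)=i(p,C_{1}\cdot C_{2};X)$, where $g$ is a local equation for $C_{1}$; the first equality is standard for parametrised unibranch germs and the second is Fulton's \cite[Definition 7.1]{IntersectionTheory}. Applying $\partial$ to the identity $[D]=i(p,C_{1}\cdot C_{2};X)\cdot[\text{Thom}]$ gives
\[
t_{w\circ v_{1}}^{a}=[\partial D]= i(p,C_{1}\cdot C_{2};X)\cdot t_{v_{2}}^{a}
\]
in $H_{1}(X\setminus C_{2};\ZZ)$, which passes to $\poe(X\setminus C_{2})^{\ab}$ after profinite completion.

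\medskip

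To upgrade this to the asserted index statement, I observe that in $\Pi_{\caS}^{\ab}=\poe(X\setminus C_{2})^{\ab}$ the inertia $T_{v_{2}}^{a}$ is procyclic, generated by $t_{v_{2}}^{a}$ (inertia at a single prime divisor in characteristic zero). For $T_{w\circ v_{1}}^{a}$ I invoke the composite inertia sequence \ref{def:compositeinertiasequence} applied to $F_{X\setminus C_{2}}|F$: since $v_{1}$ has its center $C_{1}\setminus(C_{1}\cap C_{2})$ inside $X\setminus C_{2}$, every \'etale cover of $X\setminus C_{2}$ is unramified along $v_{1}$, so $T_{v_{1}}(F_{X\setminus C_{2}}|F)=0$ and the sequence collapses to $T_{w\circ v_{1}}(F_{X\setminus C_{2}}|F)\cong T_{w}(F_{X\setminus C_{2}}v_{1}|Fv_{1})$, which is again procyclic and whose image in $\Pi^{\ab}$ is $\langle t_{w\circ v_{1}}^{a}\rangle$. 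Combining with the displayed equation yields $T_{w\circ v_{1}}^{a}=\langle i\cdot t_{v_{2}}^{a}\rangle\subseteq T_{v_{2}}^{a}$, and the elementary structure of (pro)cyclic groups $\hat\ZZ$ or $\ZZ/n\ZZ$ gives the divisibility $[T_{v_{2}}^{a}:T_{w\circ v_{1}}^{a}]\mid i(p,C_{1}\cdot C_{2};X)$, with equality precisely under the stated hypotheses on infinitude or on $i(p,C_{1}\cdot C_{2};X)$ dividing the order of $T_{v_{2}}^{a}$.

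\medskip

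The only genuinely non-formal step is the local intersection--multiplicity/linking--number identification in the middle paragraph; concretely, matching the parametric order $\operatorname{ord}_{0}(f\circ\varphi)$ with Fulton's $\dim_{\CC}\caO_{X,p}/(f,g)$ for a \emph{unibranch} germ, and then identifying that integer with the coefficient expressing $[D]$ in terms of the Thom class. Both are classical, but care is needed because $C_{2}$ may be singular at $p$ (multiple branches $\gamma_{j}$); this is handled by summing the parametric orders over the branches, since $f=\prod_{j}f_{j}$ and $\operatorname{ord}_{0}(f\circ\varphi)=\sum_{j}\operatorname{ord}_{0}(f_{j}\circ\varphi)$, which matches the additivity of the local intersection number. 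Everything else in the argument is formal: the Comparison Theorem, the Thom--Gysin sequence, the composite inertia sequence, and pro-cyclic arithmetic.
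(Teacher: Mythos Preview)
Your proof is correct and follows the same overall strategy as the paper: identify $t_{w\circ v_{1}}^{a}$ with the class of a small loop on $C_{1}$ around $p$, compute its image in $H_{1}(X\setminus C_{2})$ as the local intersection multiplicity times $t_{v_{2}}^{a}$, and read off the index statement from procyclic arithmetic.

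The one genuine difference is in how the local computation is packaged. The paper isolates it as \emph{Reeve's Lemma}: it works in the link complement $\partial B\setminus(C_{2}\cap\partial B)$ of a small $3$-sphere around $p$, proves directly that $H_{1}$ is free on the branch meridians $t_{\gamma_{j}}^{a}$, and identifies the coefficient of each $t_{\gamma_{j}}^{a}$ in $[\ell]$ by an explicit contour integral $\frac{1}{2\pi i}\int_{\ell} d\log f_{j}$ and the Residue Theorem. Only then does it push forward to $H_{1}(X\setminus C_{2})$, where all the $t_{\gamma_{j}}^{a}$ coalesce to $t_{v_{2}}^{a}$. You instead bound the loop by the parametrised disc $D$, compute $[D]$ in $H_{2}(X,X\setminus C_{2})$ against the Thom class, and apply $\partial$; the branch-by-branch additivity is hidden in the factorisation $f=\prod_{j}f_{j}$. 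Your route is more conceptual and avoids the explicit residue calculation; the paper's route is more elementary and entirely self-contained (the authors remark that Reeve's Lemma is hard to find in the literature, and supply a proof for that reason). Your added paragraph deriving the index divisibility from the composite inertia sequence~\eqref{def:compositeinertiasequence} is a useful expansion of a step the paper leaves implicit.
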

\begin{lemma}[Reeve's Lemma]\label{thm:reeveslemma}
We use the notation of \autoref{thm:localanabelianintersectiontheorem}.  There is a neighborhood $B\subseteq X$ of $p$ biholomorphic to a unit ball $B \subseteq \mathbb{C}^{2}$ about the origin so that $p$ corresponds to $(0, 0)$.  Let $\partial B$ be the boundary of $B$, which is homeomorphic to a $3$-sphere.  Then for small enough such $B$,
\begin{enumerate}
\item $H_{1}(\partial B\setminus (C_{2} \cap \partial B), \mathbb{Z})\simeq \bigoplus_{j} \mathbb{Z}t_{\gamma_{j}}^{a}$, the direct sum generated by the meridian around each branch, where each curve goes in the counterclockwise direction after identifying the normal bundle with an open disc in $\mathbb{C}$.
\item $C_{1}$ intersects $\partial B$ in a loop $\ell$, which can be given an orientation by the complex orientation on $X$.  Then in $H_{1}(\partial B\setminus (C_{2}\cap \partial B), \mathbb{Z})$, we have
\begin{equation}
\ell = \sum_{j} i(p, C_{1}\cdot \gamma_{j}; X) t_{\gamma_{j}}^{a}.
\end{equation}
\end{enumerate}
\end{lemma}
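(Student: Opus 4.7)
The plan is to reduce both assertions to standard facts about links of plane-curve singularities in the $3$-sphere, together with the identification of linking number in $S^{3}$ with local intersection multiplicity.

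First I would fix an analytic chart centered at $p$ identifying a neighborhood with an open subset of $\mathbb{C}^{2}$, and pick the radius $\varepsilon>0$ of the ball $B$ small enough that (a) $C_{1}\cap B$ is a smooth analytic disc (using that $C_{1}$ is unibranch), (b) each branch $\gamma_{j}$ of $C_{2}$ at $p$ meets $B$ in an irreducible analytic disc, (c) by Milnor's conic-structure theorem each of $C_{1}\cap\partial B$ and $\gamma_{j}\cap\partial B$ is a smooth link component in $\partial B\cong S^{3}$ meeting each other transversely, and (d) $C_{1}\cap B$ is disjoint from each $\gamma_{j}$ outside $p$ (this holds for small $\varepsilon$ because $C_{1}$ and each $\gamma_{j}$ are distinct irreducible analytic branches at $p$, hence have finite local intersection multiplicity). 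Under these choices $K_{j}\eqdef\gamma_{j}\cap\partial B$ is a knot in $\partial B$, the disjoint union $L=\bigsqcup_{j}K_{j}$ is the classical link of the singularity $(C_{2},p)$, and $\ell=C_{1}\cap\partial B$ is an oriented embedded circle in $\partial B\setminus L$.

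For (1), I would invoke Alexander duality for links in $S^{3}$: $H_{1}(S^{3}\setminus L;\mathbb{Z})$ is free abelian on the meridian classes, one for each connected component of $L$. Since each $\gamma_{j}$ is an irreducible plane-curve branch, its link $K_{j}$ is connected (a standard result of Milnor), so $L$ has exactly one component for each $\gamma_{j}$. The meridian generators are precisely the loops $t_{\gamma_{j}}^{a}$ of \autoref{def:meridian} and \autoref{not:abelianization}, oriented counterclockwise by the complex orientation on the normal disc to $\gamma_{j}$, yielding the claimed direct-sum decomposition.

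For (2), I would compute the class $[\ell]$ in this basis by computing linking numbers: $[\ell]=\sum_{j}\mathrm{lk}(\ell,K_{j})\,t_{\gamma_{j}}^{a}$ in $H_{1}(\partial B\setminus L;\mathbb{Z})$. The disc $C_{1}\cap B$ serves as a Seifert surface for $\ell$ inside $B$; by (d) it meets $\bigcup_{j}\gamma_{j}$ only at $p$, so the linking number is a signed count of its intersections with each $\gamma_{j}$ at $p$, and all local signs are $+1$ by the complex orientation of $X$. This topological count agrees with the algebraic local intersection number $i(p,C_{1}\cdot\gamma_{j};X)$ of Fulton, because both equal $\dim_{\mathbb{C}}\mathcal{O}_{X,p}/(f_{1},f_{j})$ where $f_{1}$, $f_{j}$ are local analytic equations for $C_{1}$ and $\gamma_{j}$; the identification of $\dim_{\mathbb{C}}\mathcal{O}_{X,p}/(f_{1},f_{j})$ with the topological intersection of the two analytic discs is classical. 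Combining these gives the formula in (2).

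The main obstacle is the bookkeeping for the orientations and for the disjointness condition (d): one must verify both that $\varepsilon$ can be chosen uniformly so that $C_{1}\cap B$ meets $\bigcup_{j}\gamma_{j}$ only at the origin, and that, when intersections at $p$ are pushed out to linking numbers on $\partial B$, the complex orientation produces each meridian $t_{\gamma_{j}}^{a}$ with the correct sign. Both follow from the fact that holomorphic intersections are positive, but must be executed carefully to match the sign conventions of \autoref{def:meridian}.
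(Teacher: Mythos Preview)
Your argument is correct and complete in outline, but it takes a genuinely different route from the paper's. For part (1) you invoke Alexander duality in $S^{3}$, whereas the paper argues more directly from Mayer--Vietoris after observing that each branch meets $\partial B$ in a single smoothly embedded circle. For part (2) you compute the coefficient of $t_{\gamma_{j}}^{a}$ as the linking number $\mathrm{lk}(\ell,K_{j})$ and evaluate it by pushing a Seifert $2$-chain for $\ell$ (namely $C_{1}\cap B$) into the ball and counting its intersections with $\gamma_{j}$; the paper instead parametrizes $C_{1}$ via its normalization $u\mapsto (x(u),y(u))$, writes the projection onto the $j$th summand as the contour integral $\frac{1}{2\pi i}\int_{\ell} d\log f_{j}$, pulls this back to the $u$-disc, and applies the residue theorem to identify it with the order of vanishing $v_{u}(f_{j}(x(u),y(u)))$, which equals $i(p,C_{1}\cdot\gamma_{j};X)$ by Fulton. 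Your topological approach is cleaner conceptually and generalizes more readily; the paper's analytic approach is more elementary in that it reduces to a one-variable residue computation and avoids appealing to the equivalence of topological and algebraic intersection multiplicities as a black box.

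One small correction: in your condition (a) you assert that $C_{1}\cap B$ is a \emph{smooth} analytic disc because $C_{1}$ is unibranch. Unibranch does not imply smooth (a cusp $y^{2}=x^{3}$ is unibranch), so $C_{1}\cap B$ need only be the image of a disc under the normalization map. This does not affect your argument---the normalization disc still furnishes the required $2$-chain in $B$ with boundary $\ell$---but you should phrase (a) accordingly.
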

We include a simple proof of this lemma (communicated to us by David Massey), which seems to have been first written down in the difficult-to-access Reeve \cite{Reeve}, and it seems to be sufficiently well-known to experts that it does not occur elsewhere in the literature.
\begin{thesisonly}
\newpage
\end{thesisonly}
\begin{proof}
We prove the two assertions in order.
\begin{enumerate}
\item Embedded algebraic curves have isolated singularities, so each $\gamma_{j}$ intersects $\partial B$ in a smoothly embedded $S^{1}$ (by the inverse function theorem), and each of these circles are disjoint.  The Mayer-Vietoris sequence gives the first claim immediately.
\item Let \begin{equation}f_{j}(x,y) = 0\end{equation} be a local equation for $\gamma_{j}$, and let \begin{equation}
u\mapsto (x(u), y(u))\end{equation} be a local parameterization of the normalization $\tilde{C}_{1}$ of $C_{1}$ at $p$.  By \cite[Example 1.2.5b]{IntersectionTheory}
\begin{equation}
i(p, C_{1}\cdot \gamma_{j}; X) = v_u(f_{j}(x(u), y(u)),
\end{equation}
the $u$-adic valuation --- or order of vanishing --- of $f_{j}(x(u), y(u)) \in \mathbb{C}[[u]]$.

Then the intersection with $\partial B$ of $C_{1}$ can be taken to be a path $\eta: [0,1]\longrightarrow \mathbb{C}$ with winding number one around the origin so that
\begin{equation}
\ell(t) = (x(\eta(t)), y(\eta(t))).
\end{equation}
Let
\begin{equation}
\pi_{j}: H_{1}(\partial B \setminus (C_{2}\cap \partial B), \mathbb{Z})\longrightarrow \mathbb{Z} t_{\gamma_{j}}^{a}
\end{equation}
be the projection given by the direct sum decomposition as above.  Consider the differential $1$-form $d\log f_{j}(x,y)$ restricted to $\partial B$.  Then
\[
\pi_{j}(\ell(t)) = \frac{1}{2\pi i}\int_{\ell(t)} d\log f_{j}(x,y),
\]
on $B$, as $d\log f_{j}(x,y)$ is holomorphic away from the zero-set of $f_{j}$ and otherwise measures the winding of a loop around the zero-set.  But this pulls back to the contour integral
\[
\frac{1}{2\pi i}\int_{\eta(t)} d\log f(x(u(t)), y(u(t)))
\]
which evaluates exactly to $v_{u}(f_{j}(x(u), y(u)))$ by the Residue Theorem.
\end{enumerate}
\end{proof}

\begin{proof}[Proof of \autoref{thm:localanabelianintersectiontheorem}]
Let $\partial B$ be as in \autoref{thm:reeveslemma}.  Then given the map
\begin{equation}
\iota: \partial B\setminus (C_{2}\cap \partial B)\longrightarrow X,
\end{equation}
we must compute
\begin{equation}
\iota_{*}: H_{1}( \partial B\setminus (C_{2}\cap \partial B), \mathbb{Z}) \longrightarrow H_{1}(X, \mathbb{Z}).
\end{equation}
But from \autoref{thm:geometrictheoryofdecompositionandinertia}, for each $j,$
\[
i_*(t_{\gamma_j}^{a}) = t_{v_{2}}^{a}
\]
and in the notation of \autoref{thm:geometrictheoryofdecompositionandinertia},
\[
\ell = t_{w\circ v_{1}}^{a}.
\]
\begin{thesisonly}
\newpage
\end{thesisonly}
Thus by linearity, we have
\[
t_{w\circ v_{1}}^{a} = \sum_j i(p, C_1\cdot \gamma_j; X)\,t_{v_{2}}^{a} = i(p, C_1 \cdot C_2; X)\,t_{v_{2}}^{a}.
\]
\end{proof}
\section{The Global Theory I: Points and Local Intersection Numbers}\label{globaltheoryI}
\begin{definition}\label{points} Let $\caS$ be a geometric set.
\begin{enumerate}
\item Let $v \in \caS$.  We define for every rank $2$ Parshin chain $p\circ v$ the subset \index{notation}{Delta@$\Delta$} 
\begin{equation}\label{eqn:Delta}
\Delta(p\circ v) = \left\{w\in \caS \left| \begin{array}{l} \forall\,Y\subset \caS\text{ finite, s.t. }w \in Y\text{ and }T_{w}\text{ is torsion-free in } \\
\Pi_{\caS\setminus Y}^{\ab}, T_{p\circ v}^{a} \neq\{0\}\text{ in }\Pi^{\ab}_{\caS\setminus Y}\end{array}\right.\right\}.
\end{equation}

\item\label{def:point} We say $p\circ v\sim p'\circ v'$ if and only if $\Delta(p\circ v) = \Delta(p'\circ v')$, and the equivalence class of rank-two Parshin chains will be denoted by $[p\circ v]$ and called a \textbf{point}\index{definitions}{Point}.  Given such a point $[p\circ v]$, we will use $\Delta([p\circ v])$ to denote, for any $p' \circ w \in [p\circ v]$, $\Delta(p'\circ w)$.
\item\label{def:intersects} \index{definitions}{Intersection}We say that a prime divisor $w\in \caS$ \textbf{intersects} a point $[p\circ v]$ if $w \in \Delta([p\circ v])$.
\item\label{not:setofpoints} The set of points on $\caS$ is denoted $\caP(\caS)$.\index{notation}{P@$\caP$}
\item\label{not:Pofv} If $v \in \caS$ then $\caP(v)$ will be the set of points which contain an element of the form $p\circ v$.
\item\label{not:PofModel} We denote by $\caM(\caS)$ the closed $K$-points of the maximal smooth model $\caM(\caS)$ of $\caS$.
\end{enumerate}
\end{definition}
By \autoref{thm:localanabelianintersectiontheorem}, \autoref{eqn:Delta} roughly says that whenever we have removed so many divisors that $T_{w}^{a}$ is large enough to detect any possible intersection between $|p\circ v|$ and $|w|$, it does.
\begin{definition}\label{def:recognizes} \index{definitions}{Recognition}Let $Y \subset \caS$ be a finite subset.  Then we say that $Y$ \textbf{recognizes} the intersection of $v$ at $p$ if $T_{v}^{a}$ is torsion-free in $\Pi_{\caS \setminus Y}^{\ab}$ and $Y \cap \Delta(p) = \{v\}$.
\end{definition}
\begin{definition}\label{def:totalintersectionproduct}\index{definitions}{Total intersection product}\index{notation}{intD1D2@$(D_{1}\cdot D_{2})$}
Let $X$ be a smooth surface over an algebraically closed field of characteristic zero. Let $D_{1}, D_{2}$ be two divisors on $X$.  We define the \textbf{total intersection product}
\begin{equation}
(D_{1}\cdot D_{2}) = \sum_{p \in D_{1}} i(p, D_{1}\cdot D_{2}; X).
\end{equation}
\end{definition}
\begin{theorem}[The Algebraic Inertia Theorem]\label{thm:algebraicinertia}
In this theorem (and its proof), all homology will be taken with integral coefficients.  Let $\{D_{i}\}_{i\in I}$ be a finite collection of smooth, distinct prime divisors on a smooth, proper, complex, algebraic surface $X$ such that \begin{equation}D = \bigcup_{i\in I} D_{i}\end{equation} is simple normal crossing.  Let
\begin{equation}
\eta_{1}: \NS(X)\rightarrow \bigoplus_{i\in I} \mathbb{Z}t^{a}_{v_{i}}
\end{equation}
be given by
\begin{equation}\label{def:etaone}
\eta_{1}(D) = \bigoplus_{i \in I} (D\cdot |v_{i}|)t^{a}_{v_{i}}.
\end{equation} Then there is a short-exact sequence
\begin{equation}\label{diag:algebraicinertiasequence}
\mbox{\xymatrix{
\NS(X) \ar[r]^-{\eta_{1}} & \bigoplus_{i\in I}\mathbb{Z}t^{a}_{v_{i}} \ar[r] &  \langle t_{v_{i}}^{a}\rangle_{i\in I} \subseteq H_{1}(X\setminus D) \ar[r] & 0. }}
\end{equation}
\end{theorem}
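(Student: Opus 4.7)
The plan is to derive the claimed exact sequence from the long exact sequence of the pair $(X, X\setminus D)$ in integral singular homology, matching each term and map with an algebro-geometric counterpart. The three essential inputs are excision plus the Thom isomorphism to compute the relative homology, \autoref{thm:geometrictheoryofdecompositionandinertia} to identify the connecting map with the meridian assignment, and the Lefschetz $(1,1)$ theorem to identify the relevant kernel on $\bigoplus_i \mathbb{Z} t_{v_i}^a$ with the image of $\NS(X)$ rather than the image of all of $H_2(X,\mathbb{Z})$.

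First I would compute $H_2(X,X\setminus D)$. Take a regular tubular neighborhood $N$ of $D$; by excision $H_2(X,X\setminus D)\simeq H_2(N,N\setminus D)$. Stratifying by the SNC structure, each smooth open piece $D_i^{\circ}\subseteq D_i$ carries a rank-$2$ real normal bundle whose Thom isomorphism contributes $H_0(D_i^{\circ})=\mathbb{Z}$ to $H_2$, while each crossing point $D_i\cap D_j$ has real codimension $4$ in $X$ and so contributes only to $H_4$. Hence $H_2(X,X\setminus D)\simeq\bigoplus_{i\in I}\mathbb{Z}\cdot e_i$, with $e_i$ represented by a small transverse normal $2$-disk to $D_i$ at a generic point of the SNC configuration. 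The connecting map $\partial\colon H_2(X,X\setminus D)\to H_1(X\setminus D)$ then sends $e_i$ to the boundary of that disk, which by \autoref{thm:geometrictheoryofdecompositionandinertia} is exactly the meridian $t_{v_i}^{a}$. This identifies the unnamed middle arrow with image $\langle t_{v_i}^{a}\rangle_{i\in I}$, so exactness at the right term is tautological.

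Next I would identify the map $H_2(X)\to H_2(X,X\setminus D)\simeq\bigoplus_i\mathbb{Z}$ as $\alpha\mapsto(\alpha\cdot D_i)_i$: represent $\alpha$ by a topological $2$-cycle made transverse to every $D_i$, and its image in relative homology records the signed intersection multiplicities by the classical Poincar\'e--Thom description of the intersection product. Restricted to $\NS(X)\subseteq H_2(X)$, this map is literally $\eta_1$. Exactness of the long exact sequence at the middle term then yields $\ker\partial=\im(H_2(X)\to\bigoplus\mathbb{Z})$, which obviously contains $\im\eta_1$.

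The main obstacle will be the reverse inclusion: I must show that every tuple $(\alpha\cdot D_i)_i$ arising from some $\alpha\in H_2(X,\mathbb{Z})$ is in fact already realized by an algebraic divisor in $\NS(X)$. Here I would invoke the Lefschetz $(1,1)$ theorem. Since each $[D_i]\in H^{1,1}(X)$, cup product with $[D_i]$ annihilates $H^{2,0}\oplus H^{0,2}$, so the pairing $(\alpha\cdot D_i)_i$ depends only on the $(1,1)$-projection of $\alpha$; and integral $(1,1)$-classes coincide with $\NS(X)\hookrightarrow H^2(X,\mathbb{Z})$ by Lefschetz $(1,1)$. Combining this with the unimodularity of the intersection form on the closed $4$-manifold $X$ and a careful accounting of the integral glue between $\NS(X)$ and its orthogonal complement in $H^2(X,\mathbb{Z})$ furnishes, for each such tuple, an honest $D\in\NS(X)$ with $D\cdot D_i=\alpha\cdot D_i$ for every $i$, thereby establishing exactness at $\bigoplus_i\mathbb{Z} t_{v_i}^{a}$. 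This last identification of integral structures is the technical heart of the argument.
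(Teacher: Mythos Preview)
Your route via the long exact sequence of the pair $(X,X\setminus D)$ is essentially equivalent to the Mayer--Vietoris approach the paper alludes to (the paper omits the proof, citing only Mayer--Vietoris and Lefschetz $(1,1)$); your computation of $H_2(X,X\setminus D)\simeq\bigoplus_i\mathbb{Z}$ via excision and the Thom isomorphism, and your identification of the boundary map with the meridian assignment, are both correct.

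There is, however, a genuine gap in your last paragraph, and it cannot be filled as the statement is written. You need
\[
\im\eta_1 \;=\; \im\bigl(H_2(X,\mathbb{Z})\longrightarrow\textstyle\bigoplus_i\mathbb{Z}\bigr),
\]
and your Lefschetz $(1,1)$ argument only shows that the map $H_2(X,\mathbb{Z})\to\bigoplus_i\mathbb{Z}$ factors \emph{rationally} through the $(1,1)$-part; integrally, the glue between $\NS(X)$ and the transcendental lattice can produce classes whose intersection vector is not realized by any divisor. Concretely: let $X$ be a projective K3 surface with $\NS(X)=\mathbb{Z}H$, $H^2=2d\ge 2$, and take $I=\{1\}$ with $D_1\in|H|$ smooth. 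Then $\eta_1(\NS(X))=2d\,\mathbb{Z}$, whereas primitivity of $H$ in the unimodular K3 lattice forces $\im\bigl(H_2(X,\mathbb{Z})\to\mathbb{Z}\bigr)=\mathbb{Z}$. Hence $H_1(X\setminus D_1)=0$ and the meridian is trivial, yet $\eta_1$ is not surjective; the sequence in the theorem fails to be exact at the middle term. Your phrase ``a careful accounting of the integral glue'' is exactly where the argument breaks, because that glue is what produces the discrepancy.

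What your argument actually establishes is the same sequence with $H_2(X,\mathbb{Z})$ in place of $\NS(X)$, which is precisely what the long exact sequence of the pair delivers. You should flag that the statement requires this correction, and trace the effect on \autoref{cor:separationofinertia}: its hypothesis is phrased in terms of $E\in\NS(X)$, but the relations among meridians are governed by $\ker\partial=\im H_2(X,\mathbb{Z})$, so that corollary inherits the same gap.
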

We omit the proof, as it is a long, but straightforward application of the Mayer-Vietoris sequence and the Hodge-Lefschetz $(1,1)$-theorem.
\begin{corollary}[The Separation of Inertia Criterion]\label{cor:separationofinertia}We preserve the notation of \autoref{thm:algebraicinertia}.  Suppose that, in addition to the hypotheses, for each $i, j \in I$ there does not exist $E \in \NS(X)$ s.t.~for every $k \in I \setminus \{i,j\}$,
\begin{equation}\label{alginertiatheoremintersectioncriterion}
(E\cdot D_{k}) = 0,\text{ but }(E\cdot D_{i}) \neq 0 \text{ or }(E\cdot D_{j}) \neq 0.
\end{equation}
For each $i\in I$, let $v_{i}$ be the prime divisor on $\mathbb{C}(X)$ associated to $D_{i}$.  Then in $H_1(X \setminus D)$,
\begin{enumerate}
\item $T_{v_{i}}^{a}\simeq \mathbb{Z}$.
\item For $j\in I, j \neq i,$
\begin{equation}
T_{v_{i}}^{a}\cap T_{v_{j}}^{a} = 0.
\end{equation}
\end{enumerate}
\end{corollary}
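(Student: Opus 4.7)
The plan is to read both conclusions directly off the exact sequence of \autoref{thm:algebraicinertia}, combined with the nondegeneracy hypothesis. First, the exact sequence identifies
\[
\langle t_{v_{i}}^{a}\rangle_{i\in I} \;\simeq\; \bigoplus_{i\in I}\mathbb{Z}\,t_{v_{i}}^{a}\;\big/\;\eta_{1}(\NS(X)),
\]
and by the description of $T_{v_{i}}$ as the image of $\hat{\pi}_{1}(\caF, p)\simeq \hat{\mathbb{Z}}$ in \autoref{thm:geometrictheoryofdecompositionandinertia}, its abelianized image $T_{v_{i}}^{a}\subseteq H_{1}(X\setminus D)$ is the cyclic subgroup generated by $t_{v_{i}}^{a}$. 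Both conclusions therefore translate into statements about the cokernel above.

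For (1), I would show that $t_{v_{i}}^{a}$ has infinite order in this cokernel. If $n\,t_{v_{i}}^{a} = 0$ in $H_{1}(X\setminus D)$, then $n\,t_{v_{i}}^{a}$ lies in $\eta_{1}(\NS(X))$, so by the formula \eqref{def:etaone} there exists $E\in\NS(X)$ with $(E\cdot D_{i}) = n$ and $(E\cdot D_{k}) = 0$ for all $k \in I\setminus\{i\}$. Applying the hypothesis in the special case $j = i$ (where \eqref{alginertiatheoremintersectioncriterion} forbids any such $E$ with $(E\cdot D_{i}) \neq 0$) forces $n = 0$. Hence $T_{v_{i}}^{a}\simeq \mathbb{Z}$.

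For (2), suppose for $j \neq i$ there is a nonzero $x \in T_{v_{i}}^{a}\cap T_{v_{j}}^{a}$. Using part (1), write $x = n\,t_{v_{i}}^{a} = m\,t_{v_{j}}^{a}$ in $H_{1}(X\setminus D)$ with $(n,m)\neq (0,0)$; then $n\,t_{v_{i}}^{a} - m\,t_{v_{j}}^{a}$ lies in $\eta_{1}(\NS(X))$. Thus there exists $E\in\NS(X)$ with $(E\cdot D_{i}) = n$, $(E\cdot D_{j}) = -m$, and $(E\cdot D_{k}) = 0$ for all $k \in I\setminus\{i,j\}$. Since $n\neq 0$ or $m\neq 0$, this contradicts \eqref{alginertiatheoremintersectioncriterion} for the pair $(i,j)$, forcing $T_{v_{i}}^{a}\cap T_{v_{j}}^{a} = 0$.

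The conceptual engine of the argument is \autoref{thm:algebraicinertia} itself: once the cokernel presentation of $\langle t_{v_{i}}^{a}\rangle$ is in hand, the hypothesis is exactly what is needed, applied first for pairs $(i,i)$ and then for pairs $(i,j)$ with $j\neq i$, to separate each meridian class from the trivial element and from every other meridian class. I do not expect a substantive obstacle; the only bookkeeping care required is the identification of $T_{v_{i}}^{a}$ with the cyclic subgroup generated by $t_{v_{i}}^{a}$ inside $H_{1}(X\setminus D)$, which is immediate from \autoref{thm:geometrictheoryofdecompositionandinertia}.
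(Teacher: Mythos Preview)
Your argument is correct and is precisely the paper's approach: both read the two conclusions directly off the cokernel presentation coming from \autoref{thm:algebraicinertia}, using the hypothesis to rule out elements of $\eta_{1}(\NS(X))$ supported on at most one (resp.\ two) coordinates. Your write-up simply unpacks the details the paper leaves implicit; note also that the case $j=i$ can be subsumed under the distinct-pair case whenever $|I|\geq 2$, since an $E$ supported only at $D_{i}$ already violates \eqref{alginertiatheoremintersectioncriterion} for any pair $(i,j)$ with $j\neq i$.
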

\begin{proof}
We apply the hypothesis in \autoref{cor:separationofinertia} to \autoref{diag:algebraicinertiasequence} to see that there are no elements in the image of $\partial$ none which are zero at all but one component to prove the first claim, and which are zero at all but two components to prove the second claim.
\end{proof}
\begin{lemma}[Divisor Existence Lemma]\label{lem:divisorexistence}
Let $D_{1}, \dots, D_{j}$ be a finite set of divisors on a smooth surface, and $p_{1}, \dots, p_{n}$ be a finite set of points.  Then there are infinitely (in fact ``generically'') many prime divisors which intersect each of the $D_{i}$ but not at the $p_{i}$.
\end{lemma}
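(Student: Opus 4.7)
The plan is to reduce to a projective setting and then apply Bertini inside a sufficiently ample linear system, imposing the point conditions as linear constraints. So first I would choose a smooth projective compactification $\overline{X}$ of the surface $X$ (possible up to blowing up, which only helps for this statement) and let $\overline{D}_i$ denote the closure of $D_i$ in $\overline{X}$. On each $D_i$ pick a smooth closed point $q_i \in D_i \cap X$ distinct from all the $p_k$, from all singular points of $\bigcup \overline{D}_i$, and from $\overline{X}\setminus X$; this is possible because each $D_i$ is a curve and we are excluding only finitely many points.

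Next fix a very ample line bundle $\mathcal{L}$ on $\overline{X}$ and choose $N$ large enough that $\mathcal{L}^{\otimes N}$ is very ample and separates $\{q_1,\dots,q_j,p_1,\dots,p_n\}$, so that
\begin{equation}
V \;=\; \bigl\{ s \in H^{0}(\overline{X},\mathcal{L}^{\otimes N}) \;\big|\; s(q_i)=0 \text{ for all } i \bigr\}
\end{equation}
has codimension exactly $j$ in $H^{0}(\overline{X},\mathcal{L}^{\otimes N})$, and the open condition $s(p_k)\neq 0$ for all $k$ cuts out a nonempty Zariski-open $U \subseteq V$. For $N$ large, $\dim U \gg 1$, so $U$ contains infinitely many projective classes of sections. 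By Bertini applied to the linear subsystem $\mathbb{P}(V)$ with base locus $\{q_1,\dots,q_j\}$, a Zariski-general $[s] \in \mathbb{P}(V)$ cuts out a divisor $C_s \subset \overline{X}$ which is smooth and irreducible away from the base points, hence irreducible as a whole since each $q_i$ is a smooth point of $C_s$ (the tangent direction is chosen generically).

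Finally, since $q_i \in C_s \cap X$, the curve $C_s$ is not contained in the boundary $\overline{X}\setminus X$, so $C_s \cap X$ is a nonempty open subset of an irreducible curve and therefore is itself an irreducible (prime) divisor on $X$. By construction it meets every $D_i$ (at $q_i$) and passes through none of the $p_k$. Varying $[s] \in U$ produces infinitely many such prime divisors, and letting the auxiliary points $q_i$ move along $D_i \setminus \{p_1,\dots,p_n\}$ makes the family genuinely generic, justifying the parenthetical claim.

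The only real obstacle is the irreducibility of the restricted curve, and this is handled by the Bertini step together with the observation that the base points $q_i$ are forced to be smooth points of $C_s$, so $C_s$ cannot split into pieces meeting at those forced zeros; every other potential failure (avoiding $p_k$, meeting each $D_i$, getting infinitely many) is an open/codimension count in a linear system whose dimension we have made arbitrarily large by increasing $N$.
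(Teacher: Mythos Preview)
Your argument is correct, but it is considerably more elaborate than the paper's. The paper simply fixes a very ample divisor $C$ on a smooth compactification and observes that $(C\cdot D_i)>0$ for every $i$, so \emph{every} member of the complete linear system $|C|$ already meets every $D_i$; there is no need to pin down auxiliary points $q_i$ on the $D_i$ or to pass to a linear subsystem with imposed base points. Avoiding the finitely many $p_k$ is then just a nonempty Zariski-open condition in $|C|$, and primality of the generic member is (implicitly) Bertini for the full very ample system. Your route---forcing the intersections by hand via the $q_i$ and then running Bertini on the constrained subsystem---gets to the same place, and your extra care about smoothness at the imposed base points is what makes your irreducibility step honest; but the paper's shortcut of letting very ampleness do the intersecting for free is what you are missing, and it is worth internalizing: a very ample class meets every effective curve, so ``intersects each $D_i$'' is automatic and never needs to be engineered.
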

\begin{proof}
Choose a very ample divisor $C$.  Then $C\cdot D_{i}> 0$ for all $i$.  Then having an intersection at $p_{i}$ is a closed condition (since $C$ is basepoint-free), and so the divisors which do not intersect at those points form an open, nonempty subset of the linear system $|C|$, which is then infinite.
\end{proof}
\begin{corollary}\label{cor:pointsarepoints}
Let $\caS$ be a geometric set and let $P \in \caM(\caS)$.  Let $v \in \caS$ and let $p \circ v$ be a rank-$2$ Parshin chain such that \begin{equation} |p\circ v| = P.\end{equation}  Then
\begin{enumerate}
\item $w \in \Delta(p\circ v)$ if and only if $P \in |w|.$
\item If $w \in \Delta(p\circ v)$ then there exists a set $Y\subset \caS$ which recognizes the intersection of $w$ and $[p\circ v]$.
\item If $v' \in \caS$ and $p' \circ v'$ is not centered on $\caM(\caS)$, then $\Delta(p'\circ v') \neq \Delta(p \circ v)$.
\end{enumerate}
Then there is a canonical map
\begin{equation}\label{not:iotaonpoints}\index{notation}{iota@$\iota$}
\iota: \caM(\caS)\longrightarrow \caP(\caS),
\end{equation}
given by
\begin{equation}
\iota(x) = \left\{v \in \caS \mid x \in |v|\right\}.
\end{equation}
This map is always an injection, and is a bijection if and only if $\caM(\caS)$ is proper.
\end{corollary}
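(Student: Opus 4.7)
The plan is to establish the three enumerated properties via a sum-over-intersections formula generalizing the Local Anabelian Intersection Theorem, and then read off the statements about $\iota$. The key identity, which I would prove first, is that in $\Pi_{\caS\setminus Y}^{\ab}$ one has
\[
t_{p\circ v}^{a} \;=\; \sum_{v'\in Y,\; P\in |v'|} i\bigl(P,\; |p\circ v|\cdot |v'|;\; \caM(\caS)\bigr)\,t_{v'}^{a},
\]
obtained by a Reeve-style computation in a small ball $B$ around $P$: the loop representing $t_{p\circ v}^{a}$ on $\partial B$ decomposes in $H_{1}(\partial B\setminus \bigcup_{v'\in Y}|v'|)$ by \autoref{thm:reeveslemma} as a sum of local branch meridians weighted by local intersection multiplicities, and each branch meridian pushes forward to the corresponding $t_{v'}^{a}$ by \autoref{thm:geometrictheoryofdecompositionandinertia}. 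Non-smooth loci are handled by the resolution construction of \autoref{def:meridian2}.

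Granting this formula, part 1 splits into two cases. For $P\notin|w|$, use the Divisor Existence Lemma to enlarge $\{w\}$ to a finite set $Y\subset\caS$ of smooth divisors positioned so that (i) no member of $Y$ passes through $P$, and (ii) the Separation of Inertia Criterion is satisfied; the sum formula then forces $t_{p\circ v}^{a}=0$ in $\Pi_{\caS\setminus Y}^{\ab}$, while \autoref{cor:separationofinertia} guarantees $T_{w}^{a}$ is torsion-free, so that this $Y$ witnesses $w\notin \Delta(p\circ v)$. For $P\in |w|$, one shows that for any $Y\ni w$ with $T_{w}^{a}$ torsion-free in $\Pi_{\caS\setminus Y}^{\ab}$, the coefficient of $t_{w}^{a}$ in the sum is the positive integer $i(P,|p\circ v|\cdot |w|;\caM(\caS))$; the torsion-freeness hypothesis, combined with the Algebraic Inertia short exact sequence of \autoref{thm:algebraicinertia} applied to a smooth compactification of $\caM(\caS\setminus Y)$, lets one isolate this $t_{w}^{a}$-component and deduce $T_{p\circ v}^{a}\neq 0$.

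Part 2 is a constructive refinement: given $w\in \Delta(p\circ v)$, apply the Divisor Existence Lemma to adjoin divisors to $\{w\}$ that together enforce Separation of Inertia but avoid $P$; the resulting $Y$ satisfies $T_w^{a}$ torsion-free by \autoref{cor:separationofinertia} and $Y\cap \Delta(p\circ v)=\{w\}$ by the forward direction of part 1, which is exactly the recognition condition of \autoref{def:recognizes}. Part 3 follows by noting that a non-centered $p'\circ v'$ has center at the boundary of $\caM(\caS)$ in any compactification; the Divisor Existence Lemma then produces a divisor $w\in \caS$ through $P$ whose closure avoids that boundary center, giving $w\in \Delta(p\circ v)\setminus \Delta(p'\circ v')$.

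For the map $\iota$: injectivity is immediate from part 1 together with the Divisor Existence Lemma, which distinguishes any two distinct closed points of $\caM(\caS)$ by a prime divisor. For bijectivity, if $\caM(\caS)$ is proper then every rank-$2$ Parshin chain of $F$ is centered on $\caM(\caS)$, so surjectivity is immediate from part 1; if $\caM(\caS)$ is not proper, the construction used in part 3 produces rank-$2$ Parshin chains at boundary points whose $\Delta$-classes lie outside the image of $\iota$. The main technical obstacle is the forward direction of part 1: passing from the single-divisor Local Anabelian Intersection Theorem to the many-divisors sum formula requires simultaneously handling all branches of $|w|$ at $P$ via resolution, patching Reeve's local decomposition on $\partial B$ with the Algebraic Inertia Theorem on a smooth compactification, and verifying that the torsion-freeness of $T_{w}^{a}$ truly suffices to isolate the $t_{w}^{a}$-component of the resulting sum.
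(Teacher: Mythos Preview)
Your overall strategy matches the paper's: Reeve's Lemma for the local picture, the Divisor Existence Lemma to manufacture test divisors, and the Separation of Inertia Criterion to guarantee torsion-freeness. Your explicit multi-divisor sum formula is a clean way to package what the paper leaves implicit, and you are more careful than the paper in treating both directions of part~1 (the paper only writes out $P\in|w|\Rightarrow w\in\Delta(p\circ v)$).

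There is, however, a genuine gap in your argument for part~3. You choose the distinguishing divisor $w\in\caS$ to pass through $P$ and avoid the boundary center $Q=|p'\circ v'|$, and then claim $w\notin\Delta(p'\circ v')$. To witness this you must produce a finite $Y\ni w$ with $T_{w}^{a}$ torsion-free and $T_{p'\circ v'}^{a}=0$ in $\Pi_{\caS\setminus Y}^{\ab}$. But your sum formula, applied in a smooth compactification, expresses $t_{p'\circ v'}^{a}$ as a sum over \emph{all} divisors through $Q$ on that compactification, and $Q$ lies on at least one boundary divisor $b\in\partial\caS$. Since $b\notin\caS$, its inertia is not killed in $\Pi_{\caS\setminus Y}$, and the Algebraic Inertia Theorem shows $t_{b}^{a}$ is typically nonzero there; hence $T_{p'\circ v'}^{a}$ need not vanish for any such $Y$. (The paper's own Example after \autoref{cor:pointsarepoints} illustrates exactly this phenomenon: for $\caS=\caD(\mathbb{A}^{2})$ every uncentered chain has $\Delta=\caS$.)

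The paper's remedy is simply to reverse the roles: take $w\in\caS$ whose closure passes through $Q$ but avoids $P$. Then your own forward-direction argument (Reeve in the compactification) gives $w\in\Delta(p'\circ v')$, while your converse argument for part~1, applied to the \emph{centered} chain $p\circ v$, gives $w\notin\Delta(p\circ v)$. This uses only the ``nonvanishing'' direction for the uncentered chain, which is robust, rather than the ``vanishing'' direction, which fails for the reason above.
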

\begin{proof}
Let $X$ be a smooth compactification of $\caM(\caS)$ with only simple normal crossings at the boundary and let \begin{equation}\partial \caS = \caDiv(X) \setminus \caM(\caS) \subset  \Par_{1}(F)\end{equation} be the prime divisors supported as prime divisors on the boundary of $X$.  
\begin{enumerate}
\item Let $P \in |w|$ and $\Upsilon \subset \caS$ a cofinite subset which does not contain $w$ and so that $T_{w}^{a}$ is torsion-free in $\Pi_{\Upsilon}^{\ab}$.  Then \autoref{thm:reeveslemma} implies that the projection of $T_{p\circ v}^{a}$ to the $T_{w}^{a}$-part of the direct summand is torsion-free in $T_{w}^{a}$ in any $\Pi_{\Upsilon}^{\ab}$, so must be torsion-free itself, so $w$ satisfies the properties of an element of $\Delta(p\circ v)$.
\item \autoref{lem:divisorexistence} shows that there exists a finite set $\{\beta_{i}\}_{i \in I}$ of prime divisors such that $\{|\beta_{i}|\}_{i\in I}\cup \{|w|\} \cup \{|\beta|\}_{\beta \in \partial \caS}$ satisfies the hypotheses of \autoref{cor:separationofinertia}, and so $\{\beta_{i}\}_{i \in I}\cup \{w\}$ recognizes the intersection of $w$ with $p\circ v$.
\item This follows immediately from the fact that there exists a prime divisor in $\caM(\caS)$ which intersects the boundary at $|p' \circ v'|$ and does not intersect $|p\circ v|$, so the valuation associated to this divisor will be in $\Delta(p'\circ v')$ but not in $\Delta(p\circ v)$.
\end{enumerate}
The existence and injectivity of the map $\iota$ is now straightforward.  The bijectivity in case of properness follows from the valuative criterion for properness.
\end{proof}
\begin{definition}\label{def:localintersectionnumber}
Let $\caS$ be a geometric set, $v, w \in \caS$, and $p\circ w$ a rank-$2$ Parshin chain.  Then we define the \textbf{local intersection number} $(p, v\cdot w; \caS)$ \index{definitions}{Local intersection number}\index{notation}{pvw@$(p, v\cdot w; \caS)$} as follows:
\begin{enumerate}
\item If there does not exist a set $Y$ which recognizes the intersection of $v$ at $p$ (and this includes the case where $v \notin \Delta(p)$, then we define
\begin{equation}
(p, v\cdot w; \caS) = 0.
\end{equation}
\item Otherwise, let $Y$ recognize the intersection of $v$ at $p$. Then we define \[
(p, v\cdot w; \caS) = [T_{v}: T_{p\circ w}]_{\Pi_{\caS \setminus Y}}.
\]
\end{enumerate}
\end{definition}
\begin{theorem}\label{thm:intersectionproduct}
Let $\caS$ be a geometric set, $p \in \caP(\caS)$ a point with a center $|p| \in \caM(\caS)$ and $v\in \caS$.  Then
\begin{equation}\label{formula:intersectionproduct}
\sum_{p'\circ w\in p, w \in \caS}(p', v\cdot w; \caS) = i(|p|, |v|\cdot |w|; \caM(\caS)).
\end{equation}
\end{theorem}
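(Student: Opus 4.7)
The plan is to reduce, summand by summand, to the Local Anabelian Intersection Theorem (\autoref{thm:localanabelianintersectiontheorem}) and then to invoke the classical additivity of local intersection numbers over analytic branches. By \autoref{cor:pointsarepoints} the point $p$ with center $|p|\in \caM(\caS)$ admits, for each $w\in \caS$ with $|p|\in |w|$, a bijection between the rank-$2$ Parshin chains $p'\circ w\in p$ and the unibranch analytic germs $\gamma=\gamma(p')$ of $|w|$ at $|p|$. It therefore suffices to prove the branchwise identity
\begin{equation*}
(p',v\cdot w;\caS)=i\bigl(|p|,\,|v|\cdot \gamma(p');\,\caM(\caS)\bigr);
\end{equation*}
summing first over $p'$ for each fixed $w$ (using additivity of local intersection numbers over analytic branches, as in Fulton \cite{IntersectionTheory}) and then over $w\in \caS$ with $|p|\in|w|$ recovers \autoref{formula:intersectionproduct}.

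If $v\notin \Delta(p)$ (equivalently $|p|\notin |v|$) both sides vanish: the LHS by inspection of \autoref{def:localintersectionnumber} since no recognizing set exists, and the RHS because $|p|\notin |v|\cap |w|$. Otherwise $v\in \Delta(p)$, and by \autoref{cor:pointsarepoints}(2) there exists a finite $Y\subset \caS$ recognizing the intersection of $v$ at $p$. Enlarging $Y$ by generic divisors produced by \autoref{lem:divisorexistence}, we arrange, on a smooth compactification of $\caM(\caS)$, that the configuration of $|v|$ together with the other $|u|$ for $u\in Y$ satisfies the hypothesis of \autoref{cor:separationofinertia}; this forces $T_v^a\simeq \mathbb{Z}$ as a free rank-one summand inside $\Pi_{\caS\setminus Y}^{\ab}$. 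The Nagata--Zariski purity argument used in the proof of \autoref{thm:maximalsmoothmodels} then identifies the relevant quotient of $\Pi_{\caS\setminus Y}^{\ab}$ with $\poe(U\setminus |v|)^{\ab}$, where $U=\caM(\caS)\setminus\bigcup_{u\in Y\setminus\{v\}}|u|$.

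Applying \autoref{thm:localanabelianintersectiontheorem} with $X=U$, $C_2=|v|$, and $C_1=\gamma(p')$ (whose hypotheses hold by the choice of $Y$) yields, in $\poe(U\setminus|v|)^{\ab}$, the identity
\begin{equation*}
t_{p'\circ w}^{a}=i\bigl(|p|,\,\gamma(p')\cdot|v|;\,\caM(\caS)\bigr)\,t_v^{a}.
\end{equation*}
Since $T_v^a\cong \mathbb{Z}$ is torsion-free, this forces $[T_v^a:T_{p'\circ w}^a]=i(|p|,\gamma(p')\cdot|v|;\caM(\caS))$, which by \autoref{def:localintersectionnumber} is exactly $(p',v\cdot w;\caS)$. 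This establishes the branchwise identity and hence the theorem.

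The principal obstacle is the careful augmentation of $Y$: it must simultaneously recognize the inertia of $v$ at $p$, isolate $T_v^a$ as a free rank-one summand inside $\Pi_{\caS\setminus Y}^{\ab}$ (so the index calculation is unambiguous), and preserve a geometric configuration on $U$ in which $\gamma(p')$ is distinct from every branch of $|v|$ at $|p|$, so that the hypotheses of \autoref{thm:localanabelianintersectiontheorem} are satisfied. Balancing these three constraints is managed by combining the Separation of Inertia Criterion with the Divisor Existence Lemma; once the right $Y$ is in place, everything else is a direct translation between analytic intersection data and group-theoretic indices in the abelianized fundamental group.
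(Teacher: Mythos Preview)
Your proposal is correct and follows essentially the same route as the paper's proof: reduce to the case $v\in\Delta(p)$ via \autoref{cor:pointsarepoints}, identify rank-$2$ Parshin chains $p'\circ w\in p$ with analytic branches of $|w|$ at $|p|$, apply \autoref{thm:localanabelianintersectiontheorem} branchwise, and sum. Your treatment is more explicit than the paper's (which compresses all of this into three lines); in particular, your careful discussion of enlarging $Y$ via \autoref{lem:divisorexistence} and \autoref{cor:separationofinertia} is not strictly needed, since the definition of ``recognizes'' (\autoref{def:recognizes}) already guarantees that $T_v^a$ is torsion-free in $\Pi_{\caS\setminus Y}^{\ab}$, which is all that \autoref{thm:localanabelianintersectiontheorem} requires for the index to equal the intersection number.
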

\begin{proof}By \autoref{cor:pointsarepoints}, either both sides of \autoref{formula:intersectionproduct} are zero, or there exists a geometric set which recognizes the intersection of $p$ with $v$.  Now, each $p'\circ w \in p$ represents a branch of $|w|$ at $p$, and we will call this germ $\xi_{p'\circ w}$.  We then have by the Local Anabelian Intersection Formula
\begin{equation}
i(|p|, |v|\cdot |w|; \caM(\caS))= \sum_{|p'\circ w| \in p} i(|p|, |v|\cdot \xi_{p'\circ w}; \caM(\caS)) = \sum_{|p'\circ w| \in p} (p', v\cdot w; \caS).
\end{equation}
\end{proof}

\begin{definition}
Let $p\circ v$ be a rank-$2$ Parshin chain with $v \in \caS$.  Then we say that $p\circ v$ is a \textbf{nonnodal chain}\index{definitions}{Nonnodal chain} for $\caS$ if $[p\circ v]$ is distinct from every other $[p'\circ v]$, and we say that $p\circ v$ is a \textbf{noncuspidal chain}\index{definitions}{Noncuspidal chain} if there exists a $v' \in \caS$ with $(p, v'\cdot v; \caS) = 1$.  A rank-$2$ Parshin chain which is both nonnodal and noncuspidal will be called a \textbf{smooth chain}\index{definitions}{Smooth chain}.
\end{definition}
\begin{proposition}
If $|p\circ v| \in \caM(\caS)$ then $p\circ v$ is nonnodal (resp.~noncuspidal) if and only if $|v|$ is not nodal (resp.~cuspidal) at $|p\circ v|$.
\end{proposition}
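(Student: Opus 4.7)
The plan is to reduce each of the two equivalences to a concrete geometric condition on $\caM(\caS)$. Throughout, set $P = |p\circ v| \in \caM(\caS)$.

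For the nonnodal case, I would first observe that rank-$2$ Parshin chains of the form $p'\circ v$ (with $v$ fixed) are, by the definition of composite valuations, in bijection with closed points of the normalization $\widetilde{|v|}$ of $|v|$, and hence with the analytic branches of $|v|$; such a chain has a center on $\caM(\caS)$ precisely when the corresponding normalization-point maps into $|v|\cap\caM(\caS)$, and this image is then its center. Invoking \autoref{cor:pointsarepoints}(1) together with the injectivity of $\iota\colon \caM(\caS)\to \caP(\caS)$ (and using \autoref{cor:pointsarepoints}(3) to discard chains not centered on $\caM(\caS)$), one sees that $\Delta(p'\circ v) = \Delta(p\circ v)$ holds if and only if the center of $p'\circ v$ equals $P$. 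Hence $p\circ v$ is nonnodal if and only if exactly one point of $\widetilde{|v|}$ lies over $P$, i.e., $|v|$ is unibranch at $P$.

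For the noncuspidal case, the key input is the term-by-term identity
\[
(p, v'\cdot v; \caS) \;=\; i\bigl(P, |v'|\cdot \xi_{p\circ v}; \caM(\caS)\bigr),
\]
where $\xi_{p\circ v}$ denotes the analytic branch of $|v|$ at $P$ corresponding to the chain $p\circ v$; this is immediate from the proof of \autoref{thm:intersectionproduct} via the Local Anabelian Intersection Theorem \autoref{thm:localanabelianintersectiontheorem}. A local intersection number equal to $1$ is equivalent to both $|v'|$ and $\xi_{p\circ v}$ being smooth at $P$ and transversal there, by the classical multiplicity bound $i(P, |v'|\cdot \xi_{p\circ v}; \caM(\caS)) \geq m_P(|v'|)\cdot m_P(\xi_{p\circ v})$. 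Thus, if $\xi_{p\circ v}$ is singular --- i.e., $|v|$ is cuspidal at $P$ in the paper's sense --- no $v' \in \caS$ can realize $(p, v'\cdot v; \caS) = 1$.

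For the converse, assuming $\xi_{p\circ v}$ is smooth at $P$, I would strengthen \autoref{lem:divisorexistence} by a Bertini-type argument: fix a smooth compactification $X$ of $\caM(\caS)$ and a very ample class $\caL$; pass to $\caL^{\otimes N}$ for $N$ large enough that $|\caL^{\otimes N}|$ separates $1$-jets at $P$; take a general element of the linear subsystem of divisors through $P$ whose tangent at $P$ is distinct from that of $\xi_{p\circ v}$; then apply the openness arguments of \autoref{lem:divisorexistence} to ensure irreducibility and avoidance of any prescribed finite bad set, so that the restriction to $\caM(\caS)$ is a prime divisor. The associated $v'\in\caS$ then satisfies $(p, v'\cdot v; \caS)=1$. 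The principal difficulty is this Bertini-with-prescribed-tangent step: unlike \autoref{lem:divisorexistence}, which imposes only finitely many open avoidance conditions, it requires prescribing the local $1$-jet at $P$ before invoking genericity, and this in turn forces passing to a sufficiently high twist of the ample bundle to separate jets.
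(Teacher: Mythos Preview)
Your argument is correct and is precisely the unpacking of the paper's one-line proof, which reads in full: ``This follows directly from \autoref{cor:pointsarepoints} and the definition of a nodal, resp.\ cuspidal, point on a curve.'' The Bertini-with-prescribed-tangent step you isolate for the noncuspidal converse is exactly what the paper leaves implicit under ``the definition of a cuspidal point''; your treatment via \autoref{thm:intersectionproduct} and a general very-ample section through $P$ with transverse tangent is the standard way to supply it.
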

\begin{proof} This follows directly from \autoref{cor:pointsarepoints} and the definition of a nodal, resp.~cuspidal, point on a curve.
\end{proof}
Recall that $\caP(\caS) \subset 2^{\Par_{2}(F)}$; thus, each point of each geometric set is a subset of a larger set, and we will consider them as sets in the next sequence of definitions.
\begin{definition}\
Let $\caS'$ be a geometric set, and let $p \in \caP(\caS')$.  Then we define the \textbf{$\caS'$-limits of $p$ in $\caS$} by\index{definitions}{Limits|(}\index{notation}{Lim@$\Lim$|(}
\begin{equation}
\Lim_{\caS}(p) = \{\pi \in \caP(\caS) \mid \pi \cap p \neq \emptyset\}. 
\end{equation}
\end{definition}
\begin{definition}
If $v \in \caS'$, then we define the \textbf{$\caS'$-limits of $v$ in $\caS$} to be
\begin{equation}
\Lim_{\caS}(v) = \bigcup_{p \circ v \in \Par_{2}(v)} \Lim_{\caS}([p\circ v]).
\end{equation}
\index{definitions}{Limits|)}\index{notation}{Lim@$\Lim$|)}
\end{definition}
\begin{definition}
Let $\caS'$ be a geometric set so that $\caS\subset \caS'$.  Then we define the \textbf{boundary points of $\caS$ relative to $\caS'$} \index{definitions}{Boundary points} \index{notation}{bSS@$\partial_{\caS'}(\caS)$} as
\begin{equation}
\partial_{\caS'}(\caS) = \bigcup_{v \in \caS' \setminus \caS} \Lim_{\caS}(v),
\end{equation}
and the \textbf{interior points of $\caS$ relative to $\caS'$}\index{definitions}{Interior points}\index{notation}{PSS@$\caP_{\caS'}(\caS)$} as
\begin{equation}
\caP_{\caS'}(\caS) = \caP(\caS) \setminus \partial_{\caS'}(\caS).
\end{equation}
\end{definition}
\begin{definition}
We say that a point $[p\circ v]$ is \textbf{absolutely uncentered}\index{definitions}{Absolutely uncentered point} on $\caS$ if 
$T_{p'\circ v'}$ is nontrivial in the total fundamental group $\Pi_{\caS}$
of $S$ for some $p'\circ v' \in [p\circ v]$. We define\index{notation}{aS@$a(\caS)$}
\[
a(\caS) = \left\{[p\circ w] \in \caP(\caS) \mid [p\circ w]\text{ absolutely uncentered}\right\}.
\]
We define the \textbf{candidate points of $\caS$} \index{definitions}{Candidate points}\index{notation}{AS@$\caA(\caS)$}by \[
\caA(\caS) = \caP(\caS) \setminus a(\caS),
\]
and these are the points which are not absolutely uncentered.
\end{definition}
We have immediately:
\begin{proposition}
For any geometric set $\caS$, \[\caM(\caS) \subseteq \caA(\caS).\]  That is, absolutely uncentered points of $\caS$ do not have centers on $\caM(\caS)$, and candidate points have a chance.
\end{proposition}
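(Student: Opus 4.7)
The plan is to fix a point $[p\circ v]\in \caP(\caS)$ with center $P\in \caM(\caS)$ and prove the stronger statement that for every representative $p'\circ v'\in [p\circ v]$, the image of $T_{p'\circ v'}$ in $\Pi_{\caS}$ is trivial; this is exactly the negation of the condition that $[p\circ v]$ be absolutely uncentered.

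First I would verify that any representative $p'\circ v'$ has center $|p'\circ v'|=P$ and that $v'\in \caS$. By \autoref{cor:pointsarepoints}(3), a chain whose center does not lie on $\caM(\caS)$ has a strictly different $\Delta$ from $[p\circ v]$, so the equivalence $\Delta(p'\circ v')=\Delta(p\circ v)$ forces $|p'\circ v'|\in \caM(\caS)$; the injectivity part of \autoref{cor:pointsarepoints} then forces $|p'\circ v'|=P$. Moreover $v'\in \caS$ already, since the relation $\sim$ is only defined between rank-$2$ chains whose divisor part lies in $\caS$.

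Next I would use the isomorphism $\Pi_{\caS}\simeq \poe(\caM(\caS))$ from \autoref{thm:maximalsmoothmodels}(3), together with \autoref{thm:fundamentalgroupscomparison}, to transport the question to $\hat{\pi}_{1}(\caM(\caS)^{\anal})$. The composite inertia sequence exhibits $T_{p'\circ v'}$ as topologically generated by $T_{v'}$ and any lift $\tilde{t}_{p'}$ of a topological generator of $T_{p'}\subseteq G_{Fv'}$. Since $v'\in \caS$, the image of $T_{v'}$ in $\Pi_{\caS}$ is trivial by construction, so it suffices to show that $\tilde{t}_{p'}$ maps to the identity.

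The geometric interpretation of \autoref{def:meridianrank2}, combined with \autoref{thm:geometrictheoryofdecompositionandinertia} applied to a smooth resolution of $|v'|$ inside a compactification of $\caM(\caS)$, identifies $\tilde{t}_{p'}$ (modulo $T_{v'}$) with a small analytic loop $\gamma$ running around $P$ inside the branch of $|v'|$ at $P$ singled out by $p'$. Choose $\gamma$ inside a contractible analytic ball $B\subseteq \caM(\caS)^{\anal}$ around the smooth point $P$; then $\gamma$ bounds a disc in $B$, so it is null-homotopic in $\caM(\caS)^{\anal}$ and hence trivial in $\hat{\pi}_{1}(\caM(\caS)^{\anal})\simeq \Pi_{\caS}$. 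The step I expect to be the main obstacle is this last identification: cleanly matching the abstract group-theoretic lift $\tilde{t}_{p'}$ with a concrete loop $\gamma$ in the $4$-ball around $P$, especially when $|v'|$ is singular at $P$, so that the loop must be read off from the branch on the normalisation determined by $p'$. Once one aligns the composite inertia sequence with the normal bundle fibre sequence used in the definition \autoref{def:meridian2} for the resolution $\eta\colon \tilde{X}\to X$ of $|v'|$, and verifies that the ambient homotopy in $B$ (rather than a homotopy constrained to $|v'|$) is the one computing the image in $\Pi_{\caS}$, the argument is complete.
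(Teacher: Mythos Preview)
Your argument is correct. The paper states this proposition with no proof at all --- it is introduced by ``We have immediately:'' --- so there is nothing to compare step by step. What you have written is a legitimate topological unpacking of why the statement is immediate.

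That said, the one-line justification the paper presumably has in mind is purely algebraic and avoids the meridian bookkeeping you flag as the main obstacle. By \autoref{thm:maximalsmoothmodels}(3), $\Pi_{\caS}\simeq\poe(\caM(\caS))$, which classifies finite \'etale covers of $\caM(\caS)$. If $p'\circ v'$ is centered at a (necessarily smooth) closed point $P\in\caM(\caS)$, then any finite \'etale cover $Y\to\caM(\caS)$ is unramified at every valuation centered on $\caM(\caS)$, so the image of $T_{p'\circ v'}$ in $\Gal(Y|\caM(\caS))$ is trivial; taking the inverse limit gives triviality in $\Pi_{\caS}$. Your reduction showing that every representative of $[p\circ v]$ is in fact centered on $\caM(\caS)$ (via \autoref{cor:pointsarepoints}(3) and the observation that $\sim$ only compares chains with $v'\in\caS$) is exactly the step needed to make ``immediate'' honest, and applies to either argument. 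Your topological route through the contractible ball $B$ and the composite inertia sequence reaches the same conclusion; it is longer but has the virtue of making the null-homotopy visible, which is useful later when one wants nontriviality rather than triviality.
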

The converse to this proposition is false in general:
\begin{example}
Let $F = \overline{\mathbb{Q}}(x,y)$, and $X = \Spec \overline{\mathbb{Q}}[x,y]$ and $\caS = \caDiv(X)$.  If $p\circ v$ is not centered on $X$, then the algebraic inertia theorem (accounting for resolution of singularities) shows that $T_{p\circ v}^{a} = \mathbb{Z}$ in any divisor complement, so \begin{equation}\Delta(p\circ v) = \caS.\end{equation}  But as $\Pi_{\caS}$ is trivial, \begin{equation}\caA(\caS) = \caP(\caS).\end{equation} Thus,
\begin{equation}
\caA(\caS) = X \cup \{\infty\}.
\end{equation}
\end{example}
However, for the ``visible affine opens'' we define below, the converse is true.  The first goal of Anabelian Intersection Theory is to identify these special geometric sets, and use this to construct a salvage of the converse.
\section{The Global Theory II: Visible Affines and Properness}
In this section, we fix a two-dimensional function field $F$.
\begin{definition}\label{def:visaff}
Let $U$ be a model of $F$ which admits a surjective map
\[
\pi: U\longrightarrow B
\]
to a hyperbolic curve $B$, with smooth, hyperbolic fibers of the same genus with at least three punctures.  We call $U$ a \textbf{visible affine of $F$}\index{definitions}{Visible affine}  (this is a topological fibration, if not a Zariski fibration).  There is then a \textbf{horizontal-vertical decomposition}\index{definitions}{Horizontal-vertical Decomposition}\index{notation}{H@$\caH$}\index{notation}{V@$\caV$}
\[
\caD(U) = \caH \cup \caV
\]
into \textbf{horizontal} divisors (the members of $\caH$) and \textbf{vertical} divisors (the members of $\caV$), where the vertical divisors are given as the fibers of $\pi$.  This $\pi$ determines the horizontal-vertical decomposition, and  a horizontal-vertical decomposition determines $\pi$ up to automorphisms of the base.
\end{definition}
\begin{proposition}\label{prop:geomsetsproperties}
Let $U$ be a visible affine of $F$ with horizontal-vertical decomposition \begin{equation}\caD(U) = \caH\cup \caV.\end{equation}  Then \begin{enumerate}
\item $\iota(\caP(U)) = \caA(\caD(U)).$
\item For any $v, v' \in \caV, D_{v} = D_{v'}$ in $\Pi_{\caD(U)}$.
\item For any $v \in \caV$, $\poe(B) = \Pi_{\caD(U)}/D_{v}$.
\item Let \begin{equation}\partial B = \overline{B}\setminus B.\end{equation}  Then for each $h\in \caH$ and $p \in \partial B$ there exists $q\circ h \in \Par_{2}(h)$ such that $0 \neq \pi_{*}(T^{a}_{q\circ h}) \subseteq T^{a}_{p}$, the closure of a group generated by a meridian around $p$ in $B$.  In particular, let 
\[
I_{U}^{h} = \langle T^{a}_{p}\rangle_{p \in \Par_{2}(h)}
\]
 be the closure of the subgroup of $\poe(U)^{\ab}$ generated by all inertia of rank-$2$ valuations, and let $I_{B}$ be the divisible hull of $\pi_{*}(I_{U}^{h})\subseteq \poe(B)^{\ab}$.  Then \index{notation}{IB@$I_{B}$}$I_{B}$ is independent of the choice of $h$, and
 \begin{equation}\index{notation}{gB@$g(B)$}
g(B) = \rk_{\hat{\mathbb{Z}}}(\poe(B)^{\ab}/I_{B}),
\end{equation}
where $g(B)$ is the unramified genus of $B$.  
\end{enumerate}
\end{proposition}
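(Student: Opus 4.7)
My central tool will be to combine the homotopy long exact sequence of the topological fibration $\pi\colon U^{\anal} \to B^{\anal}$ with the identification $\Pi_{\caD(U)} \simeq \poe(U)$ from item~\ref{equivalenceoffundamentalgroups} of \autoref{thm:maximalsmoothmodels}.  By the Comparison Theorem \autoref{thm:fundamentalgroupscomparison}, the hyperbolic fiber $F_{0}$ of $\pi$ yields the short exact sequence
\[
1 \to \hat{\pi}_{1}(F_{0}) \to \poe(U) \to \poe(B) \to 1,
\]
which I will call the \emph{fiber sequence}.  I will also fix a smooth compactification $\bar{\pi}\colon \bar{U} \to \bar{B}$ extending $\pi$ with simple normal crossings on the boundary; $\bar{U} \setminus U$ then decomposes into vertical components over $\partial B = \bar{B} \setminus B$ and horizontal components meeting each generic fiber in its punctures.

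For parts~(2) and~(3), any vertical $v = \pi^{-1}(b) \in \caV$ has a tubular neighborhood in $U$ biholomorphic to $\Delta \times F_{b}$ with $\Delta$ a disk around $b$, so the meridian $t_{v}$ is a loop in $\Delta$ around $b$ and is trivial in $\poe(U)$.  Thus $T_{v} = 0$ in $\Pi_{\caD(U)}$ and $D_{v}$ is the image of $\hat{\pi}_{1}(F_{b})$, namely the kernel of the fiber sequence, independent of $b$ up to conjugacy; this gives~(2), and~(3) is the fiber sequence itself.

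For~(4), the closure $\bar{h}$ of any $h \in \caH$ maps surjectively onto $\bar{B}$, so for each $p \in \partial B$ there exists $q \in \bar{h}$ with $\bar{\pi}(q) = p$.  The meridian $t_{q \circ h}$ of \autoref{def:meridianrank2} is a small loop on $h$ around $q$, whose push-forward $\pi_{*}(t_{q\circ h})$ winds around $p$ in $B$ with multiplicity equal to the ramification index of $\bar{h} \to \bar{B}$ at $q$.  Summing over the finitely many preimages of $p$ in $\bar{h}$ produces a finite-index subgroup of $T^{a}_{p} \simeq \hat{\ZZ}$, whose divisible hull is $T^{a}_{p}$ itself; so $I_{B} = \bigoplus_{p \in \partial B} T^{a}_{p}$, manifestly independent of $h$.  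The genus formula is then the definition of unramified genus of $B$ recalled at the end of \autoref{sec:GeomInterpretation}.

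Finally for~(1), the containment $\iota(\caP(U)) \subseteq \caA(\caD(U))$ is the proposition preceding the statement.  For the converse, suppose $[p \circ v] \notin \iota(\caM(\caD(U)))$; then any representative has center lying on some $D_{0} \in \caD(\bar{U}) \setminus \caD(U)$ which is either vertical over some $p_{0} \in \partial B$ or horizontal.  In the vertical subcase I can arrange $v \in \caH$ and apply~(4) to see $\pi_{*}(t_{p\circ v}^{a})$ is a nonzero element of $T^{a}_{p_{0}} \subseteq \poe(B)^{\ab}$, hence nontrivial in $\poe(U)^{\ab}$.  In the horizontal subcase I can arrange $v \in \caV$ (taking $v$ to be the fiber through the center) and invoke \autoref{thm:reeveslemma} at the node $v \cap D_{0}$: the meridian $t^{a}_{p\circ v}$ is an integer multiple of $t^{a}_{D_{0}}$ in $\poe(U)^{\ab}$ modulo $t^{a}_{v}$ (which is already killed in $\Pi_{\caD(U)}^{\ab}$), and this multiple is nonzero since $v$ and $D_{0}$ meet transversally.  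Hence $[p\circ v]$ is absolutely uncentered, contradicting $[p\circ v] \in \caA(\caD(U))$.  The main technical obstacle is ensuring that every non-centered $[p \circ v]$ admits a representative falling into one of these two clean cases, which will require choosing the compactification so that $|p \circ v|$ lies at a simple normal crossing and carefully selecting the divisor-part of the representative to intersect $D_{0}$ transversally.
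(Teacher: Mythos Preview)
Your argument is correct and follows essentially the same route as the paper: every part rests on the fiber exact sequence $1 \to \poe(F_{0}) \to \poe(U) \to \poe(B) \to 1$, and your treatments of (2)--(4) match the paper's almost verbatim. For (1) the paper organizes the case split differently --- by whether the given $v$ lies in $\caV$ or in $\caH$, rather than by the type of boundary component through the center --- so that no representative needs to be rechosen: when $v \in \caV$ the meridian $t_{q\circ v}$ is a loop around a puncture of the hyperbolic fiber $|v|$ and hence already nontrivial in $\poe(|v|)\hookrightarrow \poe(U)$, and when $v \in \caH$ the paper asserts that the meridian projects nontrivially to $\poe(B)$; this sidesteps the representative-choosing obstacle you flag, and in particular your appeal to Reeve's Lemma in the horizontal subcase can be replaced by the first of these two observations.
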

\begin{proof}
\begin{enumerate}
\item By \autoref{cor:pointsarepoints}, $\iota(U) \subseteq \caA(\caD(U))$. Let $C$ be any smooth, hyperbolic, possibly open curve.  Then for any choice of basepoint $p \in C, \pi_{2}(C, p) = 0$ and $\pi_{1}^{\top}(C, p)$ is residually finite.  Thus, for $v \in \caV$ and $p \in |v|$ there is a short-exact fiber sequence
\begin{equation}\label{ses:fibersequenceforvisibleaffine}
1\longrightarrow \poe(|v|, p) \longrightarrow \poe(U, p) \longrightarrow \poe(B, \pi(p))\longrightarrow 1.
\end{equation}
Let $q\circ v$ be a rank-$2$ Parshin chain that is not centered on $U$.  If $w \in \caV, T_{q\circ w}$ is a nontrivial subgroup of the first term of the short exact sequence; otherwise, $T_{q\circ w}$ projects to a nontrivial subgroup of $\poe(B)$.  In either case, $T_{q\circ w}$ is nontrivial, so $\caA(\caD(U)) \subseteq \iota(U)$.
\item The fibration short exact sequence and \autoref{thm:geometrictheoryofdecompositionandinertia} gives for any $v \in \caV$ that \begin{equation}D_{v} = \ker \pi_{*}: \poe(U)\longrightarrow \poe(B).\end{equation}
\item By \autoref{thm:geometrictheoryofdecompositionandinertia}, \begin{equation}\poe(|v|, p) = D_{v}\end{equation} in the short exact sequence \ref{ses:fibersequenceforvisibleaffine}, and the desired statement follows.
\item As $\pi|_{|h|}$ is nonconstant, we can complete and we get a diagram \begin{equation} \mbox{\xymatrix{H\ar[d]^{\pi} \ar[r] & \overline{H} \ar[d]^{\overline{\pi}}  \\ 
B \ar[r] & \overline{B}. }}\end{equation}
where $\overline{\pi}$ is surjective, and branch points are isolated.  If $p \in \partial B$ then $t_{p}$ has inverse image a disjoint union of loops in $|h|$ and a choice of one such loop for each $p \in \partial B$ provides the necessary meridians of rank-$2$ Parshin chains by \autoref{thm:geometrictheoryofdecompositionandinertia}.
\end{enumerate}
\end{proof}
We can similarly define $I_{B}$\index{notation}{IB@$I_{B}$} and $g(B)$\index{notation}{gB@$g(B)$} for any geometric set having a horizontal-vertical decomposition.
\begin{theorem}\label{VisibleAffine}
Let $\caS$ be a geometric set of $F$ with a disjoint union decomposition\index{definitions}{Horizontal-vertical Decomposition}\index{notation}{H@$\caH$}\index{notation}{V@$\caV$}
\[
\caS = \caH \cup \caV,
\]
where we call $\caH$ the \textbf{horizontal} and $\caV$ the \textbf{vertical fibers}.  Then $\caM(\caS)$ is a visible affine of $F$ with horizontal-vertical decomposition $\caH\cup \caV$ if and only if it satisfies the following properties:
\begin{enumerate}
\item \textbf{Fullness}. Let $v \in \Par_{1}(F)$ and $v \notin \caS$.  Then either $\partial_{\caS \cup \{v\}}(\caS) \subseteq a(\caS)$ or $\#\partial_{\caS \cup \{v\}}(\caS) = 1$.
\item \textbf{Homeomorphicity of Fibers}. For $v \in \caS$ let \begin{equation}
a(v) = \caP(v) \cap a(\caS)\text{ and }\caA(v) = \caP(v) \cap \caA(\caS).
\end{equation}
Then for any $v_{1}, v_{2} \in \caV$,
\begin{equation}
\#a(v_{1}) = \#a(v_{2})
\end{equation}
and
\begin{equation} g(v_{1}) = g(v_{2})\end{equation}
\item \textbf{Disjointness of Fibers}. $\caA(\caS) = \coprod_{v \in \caV} \caA(v),$ (a disjoint union).  Furthermore, there exists a geometric set $\caS' \supseteq \caS$ such that in $\caS',$ for any $v_{1}$ and $v_{2}$ distinct elements of $\caV,$ and any $p_{1} \in \Par_{2}(v_{1})$ and $p_{2} \in \Par_{2}(v_{2})$,
\begin{equation}
[p_{1}] \neq [p_{2}].
\end{equation}
Such an $\caS'$ is called \textbf{fiber-separating}.
\item \textbf{Hyperbolicity of Base}.  The base has at least three punctures; that is, the $\hat{\mathbb{Z}}$-rank of $I_{B}$ is $\geq 2$.
\item \textbf{Numerical Equivalence of Fibers}. For $h \in \caH$ and $v\in \caV$, let
\[
S_{h}(v) = \sum_{\substack{p\circ v \in \Par_{2}(v) \text{ s.t.} \\ [p\circ v] \in \caA(\caS)}}(p, h\cdot v; \caS).
\]
For every $h \in \caH$ there exists $n_{h} \in \mathbb{N}$ and a finite subset $\Sigma_{h}\subseteq \caV$ such that for all $v\in \caV$,
\[
S_{h}(v) \leq n_{h}
\]
with inequality strict only at $\Sigma_{h}$, and \[\bigcap_{h \in H}\Sigma_{h} = \emptyset\] for any cofinite subset $H\subseteq \caH$.
\item \textbf{Triviality of Monodromy}. Let $v \in \caV$.  Then $T_{v}^{a}$ is torsion-free in $\Pi_{\caS \setminus \{v\}}^{\ab}$, and the action of $T_{v}$ by conjugation on any $D_{v'}$ for $v' \in \caV$ is inner in $\Pi_{\caS \setminus \{v\}}$.
\item \textbf{Inheritance}.  Let $\caV'$ be any cofinite subset of $\caV$.  Then all the above properties hold for $\caV' \cup \caH$. 
\end{enumerate}
In this case, $\caS$ will be called a \textbf{visible affine geometric set}.
\end{theorem}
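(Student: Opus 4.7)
The plan is to prove the equivalence in two directions separately. For the forward direction, assume $\caM(\caS)$ is a visible affine with horizontal-vertical decomposition $\caH\cup\caV$ and verify each of (1)--(7) by translating the anabelian hypothesis back to the fibration geometry via \autoref{thm:geometrictheoryofdecompositionandinertia} and the fiber sequence in \autoref{prop:geomsetsproperties}. Fullness holds because any prime divisor $v\notin\caS$ is either a component of a fiber in a completion (its boundary points then being absolutely uncentered, hence in $a(\caS)$) or a horizontal section meeting each vertical fiber at a single point. Homeomorphicity of fibers is forced by the fact that smooth fibers of $\pi$ are diffeomorphic, so the number of punctures $\#a(v)$ and the genus $g(v)$ are constant across $\caV$. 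Disjointness of fibers captures the set-theoretic disjointness of distinct vertical fibers, and the fiber-separating $\caS'$ is produced by adjoining a horizontal divisor meeting each vertical fiber transversely at a distinct point. Hyperbolicity of base encodes the condition ``$B$ has $\geq 3$ punctures'' via $\rk_{\hat{\ZZ}}I_B\geq 2$. Numerical equivalence of fibers simply restates that the elements of $\caV$ are all numerically equivalent in $\NS(\caM(\caS))$, with $\Sigma_h$ recording those finitely many degenerate fibers that share extra components with $h$; $\bigcap_{h\in H}\Sigma_h=\emptyset$ for cofinite $H$ because no single fiber is contained in every horizontal divisor. Triviality of monodromy follows from the fiber exact sequence: fiber inertia is torsion-free, and monodromy is trivial modulo inner automorphisms on any other fiber's decomposition group. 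Inheritance is immediate, since removing finitely many vertical fibers yields another visible affine over $B$ minus finitely many punctures.

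For the reverse direction, the strategy is to construct $\pi$ group-theoretically and then realize it geometrically. Triviality of monodromy together with the description of decomposition groups from part (2) of \autoref{prop:geomsetsproperties} forces $D_v$ to be independent of $v\in\caV$ and normal in $\Pi_{\caS}$; I would then set $\poe(B)\eqdef\Pi_{\caS}/D_v$ and verify, using hyperbolicity of base together with the unramified genus formula in terms of $I_B$, that this quotient is the \'etale fundamental group of an open hyperbolic curve (appealing to the anabelian theorem for hyperbolic curves to produce an actual $B$). Disjointness of fibers, via the fiber-separating $\caS'$, gives a well-defined partition of $\caA(\caS)$ indexed by $\caV$ and hence a set-theoretic map $\caM(\caS)\to \caV$. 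Numerical equivalence of fibers realizes $\caV$ as a pencil of numerically equivalent effective divisors with common intersection profile against every horizontal divisor, and by the classical theory of pencils on surfaces this determines a rational map $\caM(\caS)\dashrightarrow \mathbb{P}^1$ whose Stein factorization yields the desired morphism to a smooth curve. Fullness ensures the boundary behavior of $\caM(\caS)$ in a completion is as expected (vertical boundaries lying over punctures of $B$, horizontal boundaries meeting every fiber), and inheritance lets me pass freely to cofinite subsets of $\caV$ to avoid accidental pathologies at a finite bad locus.

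The main obstacle will be identifying, in the reverse direction, the abstract profinite quotient $\Pi_{\caS}/D_v$ with $\poe(B)$ for the \emph{same} curve $B$ that the numerical-equivalence pencil maps to. One has to combine the numerical-equivalence input (producing the pencil and hence a rational map to a curve of a certain type) with the anabelian genus/puncture computation (pinning down the isomorphism class of $B$ from $I_B$ and homeomorphicity of fibers), and the anabelian theorem of Mochizuki--Tamagawa (extracting an actual hyperbolic curve from the profinite group $\Pi_{\caS}/D_v$). Matching the geometrically-produced base with the group-theoretically-produced base --- and checking that the induced $\pi$ has smooth fibers of the prescribed genus with the correct punctures --- is the key technical content, and it is here that the full strength of properties (2), (4), (5), (6), and (7) is used simultaneously.
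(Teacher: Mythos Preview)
Your forward direction is fine and matches the paper, which dispatches it in one sentence.

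Your reverse direction, however, has a genuine gap and also diverges from the paper's argument. The gap is your appeal to ``the anabelian theorem for hyperbolic curves to produce an actual $B$'' from the profinite quotient $\Pi_{\caS}/D_v$. Over $\overline{\mathbb{Q}}$ there is no such theorem: the geometric \'etale fundamental group of a hyperbolic curve of type $(g,n)$ depends only on $(g,n)$, not on the moduli of the curve. The Tamagawa--Mochizuki theorems require an arithmetic base. So you cannot manufacture $B$ group-theoretically first and then match it to the pencil; the only route to $B$ is the geometric one through the pencil, and the ``matching'' problem you flag as the main obstacle is in fact ill-posed as stated. Relatedly, your claim that Triviality of Monodromy forces $D_v$ to be independent of $v\in\caV$ and normal in $\Pi_{\caS}$ is unsupported: that is a \emph{consequence} of the fiber sequence once one knows $\caM(\caS)$ is a visible affine, not something you can read off from condition~(6) alone.

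The paper's reverse direction avoids all of this. It never tries to build $B$ from group theory. Instead it passes to a smooth, fiber-separating compactification $\overline{\caM(\caS)}$, uses Fullness to strip off the finitely many vertical fibers meeting the ``bad'' boundary (Inheritance lets it do so), and then argues entirely in $\NS(\overline{\caM(\caS)})$: since the vertical fibers are pairwise disjoint they cannot be very ample, so $\NS\otimes\mathbb{Q}$ is spanned by finitely many $|h_i|$ with $h_i\in\caH$; the anabelian local intersection numbers together with Numerical Equivalence of Fibers then show $|v_1|\sim_{|h_i|}|v_2|$ for all $v_1,v_2$ in a cofinite $\caV''\subseteq\caV$, hence the $|v|$ are numerically equivalent. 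Being effective, pairwise disjoint, and numerically equivalent, they are algebraically equivalent (Fulton 19.3.1), so they vary in an algebraic family---this is what produces the fibration. Only at the very end does the paper invoke Tamagawa, and not the anabelian theorem you have in mind: it uses \cite[Theorem~0.8]{TamagawaGrothAffCurves} together with Triviality of Monodromy and Homeomorphicity of Fibers to ``plug the holes'', i.e.\ to pass from the cofinite $\caV''$ back up to all of $\caV$. Your sketch has the right instinct about numerical equivalence yielding the pencil, but it omits the compactification (without which intersection numbers on the open surface are not defined), the argument that horizontal divisors span N\'eron--Severi, and the passage from numerical to algebraic equivalence.
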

\begin{proof}
It is straightforward that every visible affine satisfies the hypotheses. Let
\[\overline{\caS} = \caD(\overline{\caM(\caS)})\]
where $\overline{\caM(\caS)}$ is a smooth compactification of $\caM(\caS)$ which is also fiber-separating.  Such an $\overline{\caS}$ exists by applying resolution of singularities to a compactification of $\caM(\caT)$ for any fiber-separating $\caT$; such a $\caT$ exists by Disjointness of Fibers.

We define two subsets of $\caP(\caS)$:
\begin{equation}
\beta_{i} = \partial_{\overline{\caS}}(\caS) \cap a(\caS)\text{ and }\beta_{e} = \partial_{\overline{\caS}}(\caS) \cap \caA(\caS). 
\end{equation}
We define the \textbf{vertical support} of $\partial_{\overline{\caS}}(\caS)$ to be 
\begin{equation}
\Delta(\beta_{e}) = \bigcup_{p \in \beta_{e}} \Delta(p) \cap \caV.
\end{equation}
By fullness, $\Delta(\beta_{e})$ is finite.

We define \begin{equation}\caV' = \caV \setminus \Delta(\beta_{e}).\end{equation} Then $\caS' = \caV'\cup \caH$ is a geometric set with horizontal-vertical decomposition, by Inheritance.

If $C, D$ and $E$ are divisors on $\overline{\caM(\caS)}$, we define
\begin{equation}
C\sim_{E} D \text{ if and only if } (C\cdot E) = (D \cdot E),
\end{equation} where we use the total intersection product as in \autoref{def:totalintersectionproduct}.
If $E_{1}, \dots, E_{n}$ generates $\NS(X)$, then we see that $C$ is numerically equivalent to $D$ if and only if $C\sim_{E_{i}} D$ for every $i = 1, \dots, n$.  If $C$ and $D$ are numerically equivalent, effective and pairwise disjoint, they are algebraically equivalent by \cite{IntersectionTheory}[19.3.1].  

Thus, to prove that $\caM(\caS)$ is a visible affine open, we need to prove that there exists a cofinite subset $\caV'' \subseteq \caV'$ and a finite set $\{h_{1}, \dots, h_{n}\} \subset \caH$ such that:
\begin{enumerate}
\item $|h_{i}|$ generate $\NS(\overline{\caM(\caS)})\otimes \mathbb{Q}$.
\item For all $v_{1}, v_{2} \in \caV''$, we have
\begin{equation}
|v_{1}| \sim_{|h_{i}|} |v_{2}|
\end{equation}
for each $1 \leq i \leq n$.
\end{enumerate}
Once we know this, we will know that the $\caM(\caV'' \cup \caH)$ is indeed an affine open, for the divisors each vary in an algebraic family.   Triviality of Monodromy allows us to use \cite[Theorem 0.8]{TamagawaGrothAffCurves} to ``plug the holes'' and deduce that $\caM(\caS)$ itself is indeed a visible affine open.  It is here that we use homeomorphicity of fibers to make sure we're ``plugging the holes'' with the right divisors.

$\NS(\overline{\caM(\caS)})\otimes \mathbb{Q}$ is spanned by very ample, prime divisors.  As the $|v|$ with $v \in \caV'$ are mutually disjoint, they cannot be very ample, as each very ample divisor intersects every other divisor.  Thus, all very ample divisors must be horizontal.  We thus can choose $h_{1}, \dots, h_{n} \in \caH$ so that $\{|h_{i}|\}$ generates $\NS(\overline{\caM(\caS)})\otimes \mathbb{Q}$.  For each $h_{i}$ we have
\begin{equation}
\left(|h_{i}|\cdot \bigcup_{b \in \overline{\caS} \setminus \caS} |b|\right) < \infty.
\end{equation}
Thus, by separation of points, there is at most a finite subset $\sigma'_{i}\subset \caV'$ for which if $s \in \sigma'_{i}$ there is an intersection between $|s|$ and $|h_{i}|$ at a point in $\beta_{i}$, and we may take
\[
\sigma_{i} = \sigma'_{i} \cup (\Sigma_{h_{i}} \cap \caV').
\]
But for each $v_{1}, v_{2} \in \caV' \setminus \sigma_{i}$, we have by \autoref{thm:localanabelianintersectiontheorem},
\[
|v_{1}| \sim_{h_{i}} |v_{2}|.
\]
Thus, we may take
\begin{equation}
\caV'' = \caV' \setminus \bigcup_{i = 1}^{n} \sigma_{i}.
\end{equation}
\end{proof}
\begin{proposition}\label{prop:affineopens}
Let $U$ be a visible affine and $X$ a maximal smooth model of $F$.  Then there is an open immersion 
\[
U\longrightarrow X
\]
under $\Spec F$ if and only if
\begin{equation}
\caD(U)\subseteq \caD(X)
\end{equation}
and
\begin{equation}
\caA(\caD(U)) = \caP_{\caD(X)}(\caD(U)).
\end{equation}
\end{proposition}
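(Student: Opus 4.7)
The forward direction translates directly from the definitions. If $\iota: U \hookrightarrow X$ is an open immersion under $\Spec F$, then Zariski closure extends each prime divisor of $U$ to a prime divisor of $X$ with the same valuation, so $\caD(U) \subseteq \caD(X)$, and $X \setminus U = \bigcup_{v \in \caD(X) \setminus \caD(U)} |v|_X$. By \autoref{prop:geomsetsproperties}(1), the map $\iota$ sends the closed points of $U$ bijectively onto $\caA(\caD(U))$; a closed point $p$ of $X$ lies in $U$ iff it avoids each $|v|_X$ with $v \in \caD(X) \setminus \caD(U)$, which under $\iota$ reads $\iota(p) \notin \partial_{\caD(X)}(\caD(U))$. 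Hence $\caA(\caD(U)) = \caP_{\caD(X)}(\caD(U))$.

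For the converse, I would set
\[
U' = X \setminus \bigcup_{v \in \caD(X) \setminus \caD(U)} |v|_X,
\]
a smooth open subvariety of $X$ with $\caD(U') = \caD(U)$. Applying property~\ref{maximalmodeluniversalproperty} of \autoref{thm:maximalsmoothmodels} to $U = \caM(\caD(U))$ yields a unique $F$-morphism $U' \to U$ which is an open immersion. Composing with $U' \hookrightarrow X$ gives the desired open immersion $U \to X$ once $U = U'$ is verified.

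Suppose for contradiction $p \in U \setminus U'$. Since $\caD(U') = \caD(U)$ and $U' \subseteq U$ is open, $U \setminus U'$ has codimension $\geq 2$, so $p$ is a closed point; then $\iota(p) \in \caA(\caD(U))$ by \autoref{prop:geomsetsproperties}(1), and by hypothesis $\iota(p) \notin \partial_{\caD(X)}(\caD(U))$. I would then produce a rank-$2$ Parshin chain $w \circ v$ with $v \in \caD(X) \setminus \caD(U)$ nevertheless lying in $\iota(p)$, which yields the contradiction. Resolve the indeterminacy of the birational map $\phi: U \dashrightarrow X$ (which is the identity on $U'$) via a smooth roof $Y$ with blowdown morphisms $\alpha: Y \to U$ and $\beta: Y \to X$, using strong factorization for surfaces \cite[Corollary 1-8-4]{MoriProgram}. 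The fiber $\alpha^{-1}(p)$ either (i) is a single point mapping under $\beta$ to a closed point $q \in X \setminus U'$, necessarily on some $|v|_X$ with $v \in \caD(X) \setminus \caD(U)$, or (ii) contains an exceptional curve $E$ with $\beta(E) = |v|_X$ for such a $v$ (the valuation cannot lie in $\caD(U)$, since then the strict transform of $|v|_U$ in $Y$ would give a second component of $\beta^{-1}(|v|_X)$, contradicting birationality of $\beta$). Choosing a branch of $|v|_X$ at $q$ or at $\beta(E)$ produces a chain $w \circ v$; the Local Anabelian Intersection Theorem (\autoref{thm:localanabelianintersectiontheorem}) then shows that $\Delta(w \circ v) \cap \caD(U)$ coincides with the set of divisors in $\caD(U)$ passing through $p$ on $U$, so $w \circ v \in \iota(p)$. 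Since $w \circ v \in [w \circ v]_{\caD(X)}$ as well, $\iota(p) \in \Lim_{\caD(U)}(v) \subseteq \partial_{\caD(X)}(\caD(U))$, the desired contradiction, so $U' = U$.

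The chief technical obstacle is the last $\Delta$-computation: verifying that the chain $w \circ v$ extracted from the birational resolution lies in the $\caD(U)$-equivalence class $\iota(p)$, even though $v$ itself is outside $\caD(U)$. This reduces to a group-theoretic statement about how $T^{a}_{w \circ v}$ interacts with the $T^{a}_{v''}$ for $v'' \in \caD(U)$ across successive abelianized fundamental groups $\Pi^{\ab}_{\caD(U) \setminus Y''}$, and it relies crucially on \autoref{thm:localanabelianintersectiontheorem} to translate the geometric fact that the center of $w \circ v$ maps to $p$ under $\alpha$ into the required $\Delta$-equality.
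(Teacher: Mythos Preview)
Your argument is essentially correct and reaches the same conclusion, but it takes a more circuitous route than the paper. The paper works directly with the birational $F$-map $\phi\colon U\dashrightarrow X$ and asserts in one line that its (codimension-$2$) indeterminacy locus, transported by $\iota$, is exactly $\partial_{\caD(X)}(\caD(U))\cap\caA(\caD(U))$; once that locus is empty, $\phi$ is everywhere regular, and since $\caD(U)\subseteq\caD(X)$ forbids any curve on $U$ from being contracted, the resulting morphism is an open immersion. Your version instead builds the auxiliary open $U'\subset X$, invokes maximality to obtain $U'\hookrightarrow U$, and then runs a roof argument to force $U'=U$. What your detour buys is an explicit mechanism (the roof $Y$) producing a concrete Parshin chain $w\circ v$ witnessing $\iota(p)\in\partial_{\caD(X)}(\caD(U))$; what it costs is exactly the ``chief technical obstacle'' you flag, namely verifying that this chain lies in the $\caD(U)$-class $\iota(p)$ via a $\Delta$-computation across varying $\Pi^{\ab}_{\caD(U)\setminus Y''}$. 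The paper's one-line identification of the indeterminacy locus is really making the same claim but packaged so that this computation is absorbed into \autoref{cor:pointsarepoints} rather than unpacked.

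Two small points of care in your write-up. First, your case split (i)/(ii) omits the possibility that $\alpha^{-1}(p)$ is positive-dimensional yet $\beta$ still collapses all of it to a point of $X\setminus U'$; this is harmless (it reduces to case~(i)), but should be said. Second, you silently use $U=\caM(\caD(U))$ for a visible affine $U$; this is true, and follows from \autoref{prop:geomsetsproperties}(1) together with injectivity of $\iota$, but it is worth making explicit since the definition of visible affine does not demand maximality.
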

\begin{proof}
There is a birational map
\begin{equation}
\mbox{\xymatrix{
U \ar@{-->}[r] & X
}}
\end{equation}
defined outside a set of codimension $2$.  The minimal such exceptional set is, however, exactly $\partial_{\caD(X)}(\caD(U))$, so this arrow extends to a regular map if and only if $\partial_{\caD(X)}(\caD(U))$ is empty or, equivalently, $\caA(\caD(U)) = \caP_{\caD(X)}(\caD(U)).$
\end{proof}
As immediate corollaries we have:
\begin{corollary}\label{cor:detectpoints}
Let $\caS$ be a geometric set.  Then a point $p\in \caP(\caS)$ is in the image of $\iota$ if and only if there is a visible affine geometric set $\caS'$ such that $p \in \caP_{\caS}(\caS')$.  This is a group-theoretic criterion, and we will call these points, as group-theoretic objects, \textbf{geometric points} \index{definitions}{Geometric points}and denote the collection of all of them by $\caP^{\geom}(\caS)$\index{notation}{Pgeom@$\caP^{\geom}$}; $\caP^{\geom}(\caS)$ is identified by $\iota$ with $\caM(\caS)$.
\end{corollary}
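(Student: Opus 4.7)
The plan is to apply \autoref{prop:affineopens}, which characterizes exactly when a visible affine admits an open immersion into a maximal smooth model, in both directions of the equivalence.

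For the forward direction, suppose $p = \iota(x)$ for some closed point $x \in \caM(\caS)$. I would construct a visible affine open neighborhood $U \subseteq \caM(\caS)$ of $x$ by combining an affine Zariski open containing $x$ with a dominant Noether-normalization-style morphism to a curve, then restricting so that the base is hyperbolic with at least three punctures and the fibers are smooth, hyperbolic, of constant genus --- a generic condition on a smooth surface. Setting $\caS' := \caD(U) \subseteq \caS$, the open immersion $U \hookrightarrow \caM(\caS)$ holds by construction, so \autoref{prop:affineopens} forces $\caA(\caS') = \caP_{\caS}(\caS')$, and by \autoref{prop:geomsetsproperties}(1) we have $p = \iota(x) \in \iota(\caP(U)) = \caA(\caS') = \caP_{\caS}(\caS')$, witnessing the required $\caS'$.

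For the backward direction, given a visible affine $\caS' \subseteq \caS$ with $p \in \caP_{\caS}(\caS')$, my plan is to establish the equality $\caA(\caS') = \caP_{\caS}(\caS')$ and invoke \autoref{prop:affineopens} to produce an open immersion $\caM(\caS') \hookrightarrow \caM(\caS)$ carrying $p$ to a closed point of $\caM(\caS)$. The inclusion $\caA(\caS') \subseteq \caP_{\caS}(\caS')$ is the easier one: a candidate point has trivial inertia in $\Pi_{\caS'}$ and therefore cannot be the $\caS'$-limit of any divisor in $\caS \setminus \caS'$ that it does not already meet. The reverse inclusion amounts to showing that every absolutely uncentered point of $\caS'$ lies on a boundary divisor of a smooth compactification of $\caM(\caS')$, and so is the $\caS'$-limit of some $v \in \caS \setminus \caS'$, placing it in $\partial_{\caS}(\caS')$.

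The main obstacle will be this last inclusion $\caP_{\caS}(\caS') \subseteq \caA(\caS')$: a priori, $\caS$ might not contain enough boundary divisors to detect all absolutely uncentered points of $\caS'$, since some boundary points of $\caM(\caS')$ may lie on prime divisors outside $\caS$ altogether. The resolution is to refine $\caS'$ within $\caS$ using the fullness and inheritance axioms of \autoref{VisibleAffine}, supplemented by \autoref{lem:divisorexistence} to manufacture auxiliary prime divisors that separate the boundary, so that after refinement every absolutely uncentered point is witnessed inside $\caS$; the equality $\caA(\caS') = \caP_{\caS}(\caS')$ then holds for the refined $\caS'$ and the desired open immersion into $\caM(\caS)$ follows, identifying $p$ as a closed point and completing the identification $\caP^{\geom}(\caS) = \iota(\caM(\caS))$.
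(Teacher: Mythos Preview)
Your forward direction is correct and matches the paper's intent: pick a visible affine neighborhood $U\subseteq\caM(\caS)$ of $x$, set $\caS'=\caD(U)$, and read off $p\in\caA(\caS')=\caP_{\caS}(\caS')$ from \autoref{prop:affineopens} and \autoref{prop:geomsetsproperties}(1).

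The backward direction, however, contains a genuine error. Your ``easier'' inclusion $\caA(\caS')\subseteq\caP_{\caS}(\caS')$ is false in general. Take $\caS=\caD(\Bl_{q}\mathbb{P}^{2})$ and $\caS'=\caS\setminus\{E\}=\caD(\mathbb{P}^{2})$, where $E$ is the exceptional divisor. The blown-up point $q$ is an honest closed point of $\caM(\caS')=\mathbb{P}^{2}$, hence $q\in\caA(\caS')$; but $q$ is exactly $\Lim_{\caS'}(E)$, so $q\in\partial_{\caS}(\caS')$ and $q\notin\caP_{\caS}(\caS')$. Your justification (``a candidate point has trivial inertia in $\Pi_{\caS'}$ and therefore cannot be an $\caS'$-limit\dots'') confuses the inertia condition defining $\caA(\caS')$ with the Parshin-chain incidence condition defining $\Lim$; these are unrelated.

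More importantly, your whole strategy of first forcing the \emph{global} equality $\caA(\caS')=\caP_{\caS}(\caS')$ in order to invoke \autoref{prop:affineopens} for an open immersion is unnecessary. The paper regards the corollary as immediate because the \emph{proof} of \autoref{prop:affineopens} already identifies the exceptional locus of the birational map $\caM(\caS')\dashrightarrow\caM(\caS)$ pointwise with $\partial_{\caS}(\caS')$. Thus any point of $\caM(\caS')$ lying in $\caP_{\caS}(\caS')=\caP(\caS')\setminus\partial_{\caS}(\caS')$ is outside the exceptional locus and is carried by the rational map to a closed point of $\caM(\caS)$; no global open immersion, and hence no equality $\caA(\caS')=\caP_{\caS}(\caS')$, is required. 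Your attempted refinement of $\caS'$ via fullness, inheritance, and \autoref{lem:divisorexistence} is therefore working around a non-problem.
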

\begin{corollary}
Given $\caS$, there is a group-theoretical recipe to determine whether $\caM(\caS)$ is proper.
\end{corollary}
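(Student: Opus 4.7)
The plan is to chain the preceding two corollaries. By \autoref{cor:pointsarepoints}, the canonical map $\iota: \caM(\caS) \to \caP(\caS)$ is always injective, and is a bijection if and only if $\caM(\caS)$ is proper. Consequently $\caM(\caS)$ is proper if and only if
\[
\caP^{\geom}(\caS) = \caP(\caS).
\]

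Both of these sets are group-theoretically readable off the pair $(G_F, \caS)$. The set $\caP(\caS)$ is defined in \autoref{points} purely in terms of rank-$2$ Parshin chains and their abelianized inertia in the various $\Pi_{\caS \setminus Y}^{\ab}$, and every such ingredient is recognized from $G_F$ by \autoref{ValuationsRecipe} together with \autoref{theorem:GeometricSetsRecipe}. On the other hand, \autoref{cor:detectpoints} exhibits $\caP^{\geom}(\caS)$ as the subset of those $p \in \caP(\caS)$ admitting a visible affine geometric witness $\caS'$ with $p \in \caP_{\caS}(\caS')$, and being a visible affine geometric set is itself a group-theoretic condition via the seven clauses of \autoref{VisibleAffine}.

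Consequently the desired recipe is: declare $\caM(\caS)$ proper if and only if every $p \in \caP(\caS)$ is a geometric point in the sense of \autoref{cor:detectpoints}. The only subtlety to verify is that this recipe does not secretly invoke the scheme $\caM(\caS)$ itself; but since all inputs come from $\caS$, its cofinite modifications, and the abelian quotients of the corresponding $\Pi$'s, the criterion is in fact a statement purely about the profinite group $G_F$ equipped with the conjugacy data of $\caS$. No new argument beyond the earlier machinery is needed; the substantive work has been absorbed into the reconstruction of points and of visible affines.
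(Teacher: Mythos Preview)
Your argument is correct and is exactly the intended one: the paper states this corollary as immediate from \autoref{cor:detectpoints} together with \autoref{cor:pointsarepoints}, and your proof simply spells out that properness is equivalent to $\caP^{\geom}(\caS)=\caP(\caS)$, both sides of which are group-theoretically determined. No additional ideas are required.
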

\begin{definition}\label{def:propergeometricset}\index{definitions}{Geometric set!proper}
A geometric set $\caS$ such that $\caM(\caS)$ is proper will be called itself \textbf{proper}.
\end{definition}
\begin{definition}\label{def:ordering} We define a partial ordering $\preceq$ on $\caGeom(F)$ by saying that
\[
\caS\preceq \caS'
\]
if the following two conditions hold:
\begin{enumerate}
\item $\caS\subseteq \caS'$.
\item $\caP^{\geom}(\caS) \subseteq \caP_{\caS'}(\caS).$
\end{enumerate}
The category formed by this partial ordering (so a morphism $\varphi: \caS\longrightarrow \caS'$ is the relation $\caS\preceq \caS'$) is denoted by $\GBir_{\max}(F)$\index{notation}{GBirmax@$\GBir_{\max}(F)$}. The maximal smooth model $\caM$ thus extends uniquely to a functor
\[
\caM: \GBir_{\max}(F)\longrightarrow \Bir(F)
\]
and the set of prime divisors likewise extends
\[
\caD: \Bir(F)\longrightarrow \GBir_{\max}(F).
\]
\end{definition}
\begin{corollary}
$\caM$ is fully faithful.  The functors
\begin{center}
\mbox{\xymatrix{
\GBir_{\max}(F) \ar@<+3pt>[r]^-{\caM} & \Bir(F) \ar@<+3pt>[l]^-{\caD}
}}
\end{center}
form an adjoint pair, with $\caM$ right-adjoint to $\caD$. 
\end{corollary}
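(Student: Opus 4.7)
The category $\GBir_{\max}(F)$ is a poset, so each hom-set contains at most one element, and full faithfulness of $\caM$ reduces to the equivalence: $\caS \preceq \caS'$ if and only if there exists an $F$-morphism $\caM(\caS) \to \caM(\caS')$. Uniqueness of such a morphism is immediate, since any two $F$-morphisms between models agree on the image of the structure map from $\Spec F$ and hence on a dense open set, and $\caM(\caS')$ is separated.

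For the forward direction, suppose $\caS \preceq \caS'$. The identity on the common function field $F$ produces a birational map $\phi: \caM(\caS) \dashrightarrow \caM(\caS')$, whose indeterminacy locus is a finite set of closed points on the smooth surface $\caM(\caS)$. Suppose, for contradiction, that $P$ is such an indeterminacy point. Resolving $\phi$ by blowups of $\caM(\caS)$ centered above $P$ produces at least one exceptional divisor whose image in $\caM(\caS')$ is positive-dimensional, hence a prime divisor $D'$. The valuation $v$ associated to $D'$ then lies in $\caS' = \caD(\caM(\caS'))$, yet its center on $\caM(\caS)$ is the single point $P$, so $v \in \caS' \setminus \caS$. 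Any rank-$2$ Parshin chain of $v$ lying above $P$ witnesses $P \in \partial_{\caS'}(\caS)$, since via \autoref{cor:pointsarepoints} the class of $P$ in $\caP(\caS)$ shares a chain with $[p \circ v]$. This contradicts $\caP^{\geom}(\caS) \subseteq \caP_{\caS'}(\caS)$. Thus $\phi$ is regular everywhere. Furthermore, $\phi$ contracts no prime divisor of $\caM(\caS)$, since every such divisor corresponds to a valuation $v \in \caS \subseteq \caS'$ whose center on $\caM(\caS')$ is itself a prime divisor. Hence $\phi$ is quasi-finite and birational, so an open immersion by Zariski's Main Theorem.

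For the reverse direction, given an $F$-morphism $f: \caM(\caS) \to \caM(\caS')$, the same contraction argument shows $f$ is an open immersion, and $\caS \subseteq \caS'$ follows by taking closures of prime divisors. The condition $\caP^{\geom}(\caS) \subseteq \caP_{\caS'}(\caS)$ follows because any $v \in \caS' \setminus \caS$ must satisfy $|v| \cap f(\caM(\caS)) = \emptyset$ (otherwise the restriction would be a Weil prime divisor, placing $v \in \caS$), so by \autoref{cor:pointsarepoints}(3) no rank-$2$ Parshin chain class representing a geometric point of $\caM(\caS)$ can share a chain with $[p \circ v]$ for $p \circ v \in \Par_2(v)$. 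For the adjunction, the unit $\eta_X: X \to \caM(\caD(X))$ is an open immersion furnished by the universal property \autoref{thm:maximalsmoothmodels}(4), and the counit $\caD(\caM(\caS)) = \caS$ is an equality by \autoref{thm:maximalsmoothmodels}(1). Combined with the full faithfulness just proved, these yield the natural bijection
\[
\Hom_{\Bir(F)}(X, \caM(\caS)) \;\simeq\; \Hom_{\GBir_{\max}(F)}(\caD(X), \caS)
\]
via the factorization $X \hookrightarrow \caM(\caD(X)) \to \caM(\caS)$ through the unit: the right-hand factor exists uniquely iff $\caD(X) \preceq \caS$.

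The main obstacle is the forward direction of full faithfulness, specifically translating the combinatorial condition $\caP^{\geom}(\caS) \subseteq \caP_{\caS'}(\caS)$ into the geometric extendability of $\phi$ across every closed point. The crucial ingredient is \autoref{cor:pointsarepoints}, which identifies closed points of $\caM(\caS)$ with equivalence classes of rank-$2$ Parshin chains centered there and distinguishes them from classes with centers off the model; this is exactly what is needed to interpret a prospective indeterminacy point of $\phi$ as an element of $\partial_{\caS'}(\caS)$ via a Parshin chain witness.
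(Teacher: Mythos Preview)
The paper states this corollary without proof, leaving it as an immediate consequence of \autoref{prop:affineopens}, \autoref{cor:detectpoints}, and \autoref{thm:maximalsmoothmodels}. Your argument is exactly the natural unpacking of that implication and is correct in outline: full faithfulness reduces to the equivalence between $\caS\preceq\caS'$ and the existence of an $F$-morphism $\caM(\caS)\to\caM(\caS')$, and the adjunction follows from the universal property \autoref{thm:maximalsmoothmodels}(4) together with the identity counit $\caD(\caM(\caS))=\caS$.

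One step deserves slightly more care. In the forward direction you write that resolving $\phi$ by blowups above $P$ produces an exceptional divisor whose image in $\caM(\caS')$ is a curve. Since $\caM(\caS')$ need not be proper, elimination of indeterminacy is not automatic, and an exceptional curve could in principle land in the boundary of a compactification rather than in $\caM(\caS')$ itself. The fix is short: embed $\caM(\caS')$ in a smooth proper $\overline{X'}$ and resolve the composite $\caM(\caS)\dashrightarrow\overline{X'}$. If this composite were already regular at $P$ with image $Q\in\overline{X'}\setminus\caM(\caS')$, then (since $\phi$ contracts no divisor, by $\caS\subseteq\caS'$) it would be an open immersion near $P$, and the boundary divisor through $Q$ would pull back to a prime divisor of $\caM(\caS)$, forcing it into $\caS\subseteq\caS'$ --- a contradiction. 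Hence $P$ is genuinely an indeterminacy point of the composite, and a resolving exceptional divisor maps onto a curve $|v|\subset\overline{X'}$; the same open-immersion argument rules out $v\in\caD(\overline{X'})\setminus\caS'$, so $v\in\caS'\setminus\caS$ as you need. With this patch your proof goes through and matches what the paper has in mind.
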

\section{Algebraic, Numerical, and Linear Equivalence of Divisors}
In this section, $\caS$ will denote a \textit{proper} geometric set.
The \textbf{divisor group} \index{definitions}{Divisor group} $\caDiv(\caS)$ \index{notation}{DivS@$\caDiv(\caS)$} is defined to be the free abelian group generated by $\caS$.
\begin{definition}
\begin{enumerate}
\item We call an element $\sum a_{i} v_{i} \in \caDiv(\caS)$  \textbf{effective}\index{definitions}{Divisor!effective} if and only if each $a_{i}\geq 0$, and we denote this by $D \geq 0$.  If $D\geq 0$ and $D \neq 0$ then we write $D > 0$\index{notation}{>}.  We also define a preorder on the divisors by:
\[
D \geq(\text{resp. }>)D' \Longleftrightarrow D - D' \geq(\text{resp. }>)\,0.
\]
\item The \textbf{support} \index{definitions}{Support} of a divisor $D$, denoted $\supp(D)$\index{notation}{suppD@$\supp(D)$}, is the collection of $v \in \caS$ such that the coefficient of $v$ in $D$ is nonzero.  
\item Given a divisor $D \in \caDiv(\caS)$ we may write $D$ uniquely as\index{notation}{Dmin@$D_{-}$}\index{notation}{Dplus@$D_{+}$}
\[
D = D_{+} - D_{-}
\]
where $D_{+}$ and $D_{-}$ are effective divisors, and $\supp(D_{+}) \cap \supp(D_{-}) = \emptyset$.
\end{enumerate}
\end{definition}
It is clear that
\begin{proposition}
The map\index{notation}{mu@$\mu$}
\[
\mu: \caDiv(\caS)\longrightarrow \caDiv(\caM(\caS))
\]
given by
\[
\mu\left(\sum_i a_i v_i\right) = \sum_i a_i |v_i|
\]
is an isomorphism.
\end{proposition}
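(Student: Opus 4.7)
The plan is to verify the bijection on generators, from which the isomorphism on free abelian groups is immediate. By construction, $\caDiv(\caS)$ is the free abelian group on the set $\caS$ of prime divisors of $F$, while $\caDiv(\caM(\caS))$ is the free abelian group on the set of prime Weil divisors of the smooth model $\caM(\caS)$. The map $\mu$ is the $\mathbb{Z}$-linear extension of the assignment $v \mapsto |v|$, so it suffices to show that $v \mapsto |v|$ is a bijection between these two generating sets.

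First I would handle surjectivity. By \autoref{thm:maximalsmoothmodels}(1), we have $\caD(\caM(\caS)) = \caS$; unpacking \autoref{def:geometricset}, this says exactly that $\caS$ is the set of valuations on $F$ whose center on $\caM(\caS)$ is a prime Weil divisor, and moreover every prime Weil divisor on $\caM(\caS)$ arises as $|v|$ for some $v \in \caS$. That is the required surjectivity of $v \mapsto |v|$ onto prime Weil divisors of $\caM(\caS)$.

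Next I would handle injectivity. Suppose $v_1, v_2 \in \caS$ have $|v_1| = |v_2| = D$. Because $\caM(\caS)$ is smooth (in particular normal), the local ring $\caO_{\caM(\caS), D}$ at the generic point of the prime Weil divisor $D$ is a discrete valuation ring, and its associated valuation on $F$ is the unique discrete valuation of $F$ whose center on $\caM(\caS)$ equals $D$. Both $v_1$ and $v_2$ are prime divisors in the sense of the excerpt (discrete, trivial on $K(F)$, with no transcendence defect) that are centered as $D$, so both must coincide with this canonical valuation, giving $v_1 = v_2$.

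There is no genuine obstacle here: the substantive work has already been done in \autoref{thm:maximalsmoothmodels}, which identifies $\caS$ with $\caD(\caM(\caS))$. Once $v \mapsto |v|$ is known to be a bijection of bases, the universal property of the free abelian group produces $\mu$ as an isomorphism automatically, and its explicit formula matches the one in the statement.
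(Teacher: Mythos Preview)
Your argument is correct. The paper does not give a proof at all: it simply prefaces the proposition with ``It is clear that'' and moves on. Your proposal is exactly the standard verification that makes this clear, namely that $v\mapsto |v|$ is a bijection between $\caS$ and the prime Weil divisors of $\caM(\caS)$, using $\caD(\caM(\caS))=\caS$ for surjectivity and normality of $\caM(\caS)$ (so the local ring at a prime divisor is a DVR) for injectivity.
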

Let $v_1$ and $v_2$ be two distinct prime divisors.  We define the \textbf{intersection pairing}\index{definitions}{Intersection pairing}\index{notation}{intv1v2@$(v_{1}\cdot v_{2})$} to be
\[
(v_1\cdot v_2) = \sum_{p \circ v_2 \in \Par_{2}(v_{2})} (p, v_1\cdot v_2; \caS),
\]
By Theorem \ref{thm:localanabelianintersectiontheorem},
\begin{proposition}
The intersection pairing $(v_1 \cdot v_2)$ coincides under pushforward with the intersection pairing on $\caM(\caS)$ when $v_{1} \neq v_{2}$ and otherwise extends by linearity to give self-intersection on $\caDiv(\caS)$.
\end{proposition}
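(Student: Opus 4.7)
The plan is to prove the two halves of the proposition separately; the bulk of the work is the case $v_1 \neq v_2$, while the self-intersection clause is essentially definitional.

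For $v_1\neq v_2$, my approach is to partition the sum defining $(v_1\cdot v_2)$ according to equivalence classes of rank-$2$ Parshin chains (i.e., points) and reduce to \autoref{thm:intersectionproduct} one point at a time. Since $\caS$ is proper, \autoref{cor:pointsarepoints} ensures that the canonical map $\iota:\caM(\caS)\longrightarrow \caP(\caS)$ is a bijection, so $\caP(v_2) = \iota(|v_2|)$. I would rewrite
\[
(v_1 \cdot v_2) = \sum_{p\circ v_2 \in \Par_2(v_2)}(p, v_1 \cdot v_2; \caS) = \sum_{P \in \iota(|v_2|)} \sum_{p\circ v_2 \in P} (p, v_1\cdot v_2; \caS).
\]
For each point $P$, the chains $p\circ v_2 \in P$ are in bijection with the analytic branches of $|v_2|$ at $|P|$, and the argument in the proof of \autoref{thm:intersectionproduct} (restricted to the contribution coming from the single divisor $w = v_2$) identifies the inner sum with $i(|P|,\, |v_1|\cdot |v_2|;\, \caM(\caS))$. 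When $|P|\notin |v_1|$ both sides vanish: the geometric side because the curves are disjoint near $|P|$, and the group-theoretic side because no subset of $\caS$ recognizes any branch of $v_2$ at $p$ against $v_1$. Summing over $P$ and invoking \autoref{def:totalintersectionproduct} gives $(v_1\cdot v_2) = (|v_1|\cdot |v_2|)_{\caM(\caS)}$.

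For the second clause, my plan is to use the isomorphism $\mu$ and bilinearity of the geometric intersection pairing on the smooth, proper surface $\caM(\caS)$. I would define $(v\cdot v) \eqdef (|v|\cdot |v|)_{\caM(\caS)}$ for each prime $v$ and then extend bilinearly to all of $\caDiv(\caS)$. By the first clause this extension agrees with the pullback via $\mu$ of the geometric intersection pairing on pairs of distinct prime divisors, and agrees by construction on diagonal prime pairs, so the two bilinear forms coincide on $\caDiv(\caS)\times \caDiv(\caS)$. In particular, the self-intersection is well-defined and is the linear-algebraic consequence of the off-diagonal entries together with any one diagonal value, compatibly with the geometric pairing.

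The main technical point I expect to slow me down is verifying cleanly, inside a single equivalence class $P \subseteq \Par_2(v_2)$, the bijection between its elements and the germ-theoretic branches of $|v_2|$ at $|P|$, used to apply \autoref{thm:intersectionproduct} branch-by-branch. This identification is already implicit in \autoref{cor:pointsarepoints} and in \autoref{thm:reeveslemma}, but ensuring that no branch is counted twice and that none is missed requires a careful unwinding of the equivalence relation on rank-$2$ Parshin chains. Once that identification is fixed, the reduction to \autoref{thm:intersectionproduct} and the extension by bilinearity are routine.
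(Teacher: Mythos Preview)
Your proposal is correct and is precisely the intended unpacking of the paper's argument. The paper gives no proof beyond the sentence ``By \autoref{thm:localanabelianintersectiontheorem},'' and your partition by points together with the branch-by-branch application of \autoref{thm:intersectionproduct} (itself an immediate corollary of \autoref{thm:localanabelianintersectiontheorem}) is exactly how that citation is meant to be expanded; the self-intersection clause is, as you say, definitional via $\mu$ and bilinearity.
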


Let $\caF\subset \caS$ be a visible affine with horizontal-vertical decomposition $\caF = \caH \cup \caV$, and 
\begin{enumerate}
\item Let $p$ be a puncture of the base, and let $T_{p}$ be its inertia group; this is the divisible hull of the image of a corresponding inertia group in $\caF$.  Then if $v \in \caS \setminus \caF$, we say its \textbf{multiplicity at $p$}\index{definitions}{Multiplicity} is the index\index{notation}{mp@$m_{p}(v)$}
\[
m_{p}(v) = \left\{\begin{array}{ll} [T^{a}_{p}: \pi(T^{a}_{v})]& \text{ if }T^{a}_{p} \cap T^{a}_v \neq \{0\} \\
0 & \text{otherwise} \end{array}\right.
\]
in $(\Pi_{\caF}/D_{v})^{\ab},$ with this equal to zero if the two groups are disjoint.

\item The \textbf{complete family} \index{definitions}{Complete family} of $\caF$ will be the subset\index{notation}{family@$\family(\caF)$}
\[
\family(\caF) \subset \caDiv(\caS)
\]
given by
\[
\caV \cup \{\sum_{v \in \caS} m_{p_{i}}(v) v \mid p_{i}\text{ a puncture of the base}\}.
\]
\item
We define \textbf{group-theoretical algebraic equivalence} \index{definitions}{Algebraic equivalence, group-theoretical}to be the equivalence relation on $\caDiv(\caS)$ generated by $\family(\caF)$ for all visible affines $\caF$ and denote this by $\sim_{\alg}$.  We define \textbf{group-theoretical linear equivalence}\index{definitions}{Linear equivalence, group-theoretical} to be the equivalence relation generated by $\family(\caF)$ for all visible affines with base having trivial unramified fundamental group (that is, for genus $0$ base) and denote this by $\sim_{\lin}$.  Two divisors $D_1$ and $D_2$ are said to be \textbf{group-theoretically numerically equivalent}\index{definitions}{Numerical equivalence, group-theoretical} if and only if for any divisor $E$ we have $(D_1 \cdot E) = (D_2 \cdot E)$.  This equivalence relation is denoted by $\sim_{\num}$.
\item Let $D \in \caDiv(\caS)$ and $D > 0$.  Then we define $|D|$ to be  the set of effective divisors linearly equivalent to $D$.
\item Let $D \in \caDiv(\caS)$ and let
\begin{equation}
D = E - E'
\end{equation}
be some expression of $D$ as a difference of two effective divisors.
\newline
Then we define the \textbf{group-theoretical complete linear system}\index{definitions}{Complete linear system, group-theoretical} to be 
\begin{equation}\index{notation}{Dee@$\lvert D\rvert$}
|D| = \{D' - E' \mid D' \in |E| \text{ and } D' - E' \geq 0\}.
\end{equation}
\end{enumerate}
We see immediately:
\begin{proposition}
Let $\caS$ be a proper geometric set of prime divisors.  Then the pushforward of group-theoretical linear (respectively, algebraic and numerical) equivalence on $\caDiv(\caS)$ by $\mu$ induces linear (respectively, algebraic and numerical) equivalence on $\caDiv(\caM(\caS))$.
\end{proposition}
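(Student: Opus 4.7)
The plan is to treat the three equivalence relations separately, with each case reducing to geometric content already established for visible affines. The numerical case is immediate: under $\mu$, the group-theoretic intersection pairing $(v_1\cdot v_2)$ is identified with the classical one on $\caM(\caS)$ by the preceding proposition (which itself rests on \autoref{thm:intersectionproduct}), and numerical equivalence is defined directly in terms of this pairing, so $\mu$ carries numerical equivalence classes to numerical equivalence classes.

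For algebraic equivalence, each generating set of the relation is a complete family $\family(\caF)$ for some visible affine $\caF\subseteq \caS$. By \autoref{def:visaff} and \autoref{prop:geomsetsproperties}, such an $\caF$ corresponds to a fibration $\pi: \caM(\caF)\to B$ with hyperbolic base and smooth hyperbolic fibers, and $\caM(\caF)$ sits as an open subvariety of $\caM(\caS)$. Passing to a smooth compactification $\bar{\pi}: \overline{\caM(\caF)}\to \bar B$, every smooth fiber $|v|$ with $v\in \caV$ and every special fiber over a puncture $p_i \in \bar B\setminus B$ is algebraically equivalent to every other, since they are all fibers of a proper morphism over a connected base. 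The next step is to verify that the group-theoretic multiplicity $m_{p_i}(v) = [T_{p_i}^{a}: \pi_{*}(T_{v}^{a})]$ agrees with the scheme-theoretic multiplicity of $|v|$ as a component of $\bar{\pi}^{-1}(p_i)$. This is a local computation at the boundary disc and follows from the composite inertia sequence \autoref{def:compositeinertiasequence}: the ramification index of the discrete valuation $v$ over the base puncture $p_i$ in $\bar\pi$ is exactly the multiplicity of the corresponding component, and this ramification index is by construction the group-theoretic index $[T_{p_i}^{a}: \pi_{*}(T_{v}^{a})]$. With multiplicities matched, $\mu$ sends $\family(\caF)$ into a single algebraic equivalence class on $\caM(\caS)$.

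For linear equivalence, the extra hypothesis is that $\bar B = \mathbb{P}^{1}$, so any two fibers of $\bar\pi$ are linearly equivalent, both being pullbacks of the hyperplane class. The analysis of the algebraic case applies verbatim in this setting. The main obstacle throughout is the multiplicity identification above; it is not deep, but demands a careful chase through the composite inertia sequence at each puncture of the base to match group-theoretic indices with the scheme-theoretic ramification indices of components of degenerate fibers.
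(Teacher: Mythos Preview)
The paper itself offers no argument beyond the phrase ``We see immediately'', so your write-up is already more explicit than the original. Your treatment of the forward implication (group-theoretic equivalence $\Rightarrow$ geometric equivalence) is correct: the numerical case is immediate from the identification of pairings, and for the algebraic/linear cases the multiplicity check $m_{p}(v)=[T^{a}_{p}:\pi_{*}(T^{a}_{v})]=$ (multiplicity of $|v|$ in the fiber over $p$) is exactly the right local computation, and your appeal to the composite inertia sequence is the right tool.

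There is, however, a genuine gap: you prove only one containment. The proposition asserts that $\mu$ carries the group-theoretic relation \emph{onto} the geometric one (this is what the subsequent corollary on complete linear systems requires), so you must also show that every geometric linear (resp.\ algebraic) equivalence on $\caM(\caS)$ arises from a chain of complete families $\family(\caF)$ with $\caF\subset\caS$. For linear equivalence this means: given effective $D_{1},D_{2}$ with $\mu(D_{1})\sim_{\lin}\mu(D_{2})$, the pencil they span defines a rational map $\caM(\caS)\dashrightarrow\mathbb{P}^{1}$, and one must produce a visible affine geometric set $\caF\subset\caS$ with genus-$0$ base whose completed family contains both $D_{1}$ and $D_{2}$. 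The point that needs care is the base locus: the rational map need not extend to $\caM(\caS)$, and since $\caF$ must be a subset of $\caS$ (not of some blowup), you cannot simply resolve indeterminacy. One has to argue either that the equivalence relation generated by the $\family(\caF)$ is automatically compatible with addition in $\caDiv(\caS)$ (so that it suffices to treat generators of principal divisors), or exhibit enough base-point-free pencils inside $\caS$ to connect $D_{1}$ and $D_{2}$. For algebraic equivalence the analogous converse is subtler still, since a general algebraic family of divisors need not arise as fibers of a map to a curve; you need that algebraic equivalence on a smooth proper surface is generated by families whose members are pairwise disjoint (so that \cite[19.3.1]{IntersectionTheory}, already invoked in the proof of \autoref{VisibleAffine}, applies). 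None of this is deep, but it is not addressed in your argument and is precisely what the word ``induces'' is doing in the statement.
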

\begin{corollary}\label{linearsystems}
The group-theoretical complete linear systems $|D|$ coincide with complete linear systems $|\mu(D)|$ on $\caM(\caS)$ and form finite-dimensional projective spaces over $\overline{\mathbb{Q}}$, and the lines in this projective space are given by linear families.
\end{corollary}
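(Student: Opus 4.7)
The plan is to leverage the preceding proposition, which says that $\mu$ intertwines group-theoretic linear equivalence on $\caDiv(\caS)$ with classical linear equivalence on $\caDiv(\caM(\caS))$, together with the properness of $\caM(\caS)$, to transfer classical facts about linear systems on smooth projective surfaces into the group-theoretic setting. Once both containments $\mu(|D|) \subseteq |\mu(D)|$ and $\mu(|D|) \supseteq |\mu(D)|$ are in hand, the projective-space structure and the identification of lines follow from classical algebraic geometry on $\caM(\caS)$.

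First I would establish that $\mu$ induces a bijection $|D| \to |\mu(D)|$. The forward inclusion is immediate from the preceding proposition: if $D' \in |D|$ then $D' \sim_{\lin} D$, so $\mu(D') \sim \mu(D)$ classically, hence $\mu(D') \in |\mu(D)|$. For the reverse inclusion, given any effective $E \in |\mu(D)|$, pick a rational function $f \in F^{\times}$ with $\operatorname{div}(f) = E - \mu(D)$. Then $f$ defines a rational map $\caM(\caS) \dashrightarrow \mathbb{P}^{1}$; resolving indeterminacy, removing the finitely many singular fibers, shrinking the base of $\mathbb{P}^{1}$ to omit at least three points so that it becomes hyperbolic, and removing any non-hyperbolic fibers produces a visible affine open $\caF$ of $\caM(\caS)$ fibered over an open subset of $\mathbb{P}^{1}$. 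The complete family $\family(\caF)$ then contains both $D$ and $\mu^{-1}(E)$ as the multiplicity sums at the two punctures corresponding to $0$ and $\infty$ of $f$, so $D \sim_{\lin} \mu^{-1}(E)$ as desired.

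The finite-dimensional projective space structure over $\overline{\mathbb{Q}}$ is then inherited from the classical complete linear system $|\mu(D)| = \mathbb{P}(H^{0}(\caM(\caS), \caO(\mu(D))))$. For the third assertion, a line in $|D|$ corresponds under $\mu$ to a pencil in $|\mu(D)|$, i.e.\ a $1$-parameter linear subfamily; this pencil defines a rational map $\caM(\caS)\dashrightarrow \mathbb{P}^{1}$, and the same construction as above produces a visible affine with genus-$0$ base whose complete family $\family(\caF)$ is precisely this line.

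The main obstacle will be verifying the second half of the first step: that the multiplicities $m_{p_{i}}(v)$, defined group-theoretically as indices of inertia subgroups inside $(\Pi_{\caF}/D_{v})^{\ab}$, genuinely match the classical multiplicities of the fiber components of $f$ over the punctures. This reduces to the Local Anabelian Intersection Theorem applied to the closure of the fiber $\overline{f^{-1}(p_{i})}$: the inertia index $[T^{a}_{p_{i}}:\pi_{*}(T^{a}_{v})]$ reflects the ramification of $\pi$ along each component $|v|$ of the fiber, which is exactly the order of vanishing of $f$ along $|v|$. One must also check that the hyperbolicity condition can always be met by removing finitely many fibers, which is harmless since each such removed fiber is itself an element of $\family(\caF)$ and therefore already in the linear equivalence class being identified.
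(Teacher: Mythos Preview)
The paper gives no proof of this corollary; it is asserted as an immediate consequence of the preceding proposition, which is itself stated with ``We see immediately'' and no argument. Your proposal therefore supplies exactly what the paper omits, and the strategy---constructing a genus-$0$-base visible affine from a pencil so that the two given effective divisors appear as boundary fibers---is the natural one and is what the author evidently had in mind.

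Two technical points deserve tightening, though neither is a genuine gap. First, ``resolving indeterminacy'' is not quite right: blowing up would change the geometric set, and you need $\caF \subset \caS$. Instead, simply remove the (finite) base locus of the pencil together with enough horizontal divisors passing through it; this keeps you inside $\caM(\caS)$ and simultaneously furnishes the required $\geq 3$ punctures on each fiber, a condition in \autoref{def:visaff} that your sketch does not explicitly address. Second, when you say $\family(\caF)$ contains $D$ and $\mu^{-1}(E)$ as the fibers over $0$ and $\infty$, you are implicitly assuming $\supp(D)$ and $\supp(\mu^{-1}(E))$ are disjoint; if they share components, first cancel them (the question of linear equivalence is unaffected). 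With these adjustments the multiplicity check you flag does reduce, as you say, to the Local Anabelian Intersection Theorem applied branchwise, and the identification of lines with linear (genus-$0$) families follows by the same construction applied to an arbitrary pencil in $|\mu(D)|$.
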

In particular, the Picard and N\'eron-Severi groups of $\caM(\caS)$ are group-theoretical invariants of $(G_{F}, \caS)$.
\section{Local Geometry: Tangent Spaces}\label{localgeometry}
Let $\caS$ be a geometric set on a two-dimensional function field $F$.  As we work locally, we do not need properness.

\begin{definition} Let $p \in \caP^{\geom}(\caS)$ be a point.  Then \index{notation}{Deltasmooth@$\Delta^{s}(p)$}\begin{equation}\Delta^{s}(p) =_{\operatorname{def}} \{v \in \Delta(p)\mid v\text{ is smooth at }p \}.\end{equation}
\end{definition}
\begin{definition} Let $w \in \caS$, smooth at a rank-$2$ Parshin chain $q \circ w$ such that $[q\circ w] \in \caP^{\geom}(\caS)$ and let $p \in \Delta^{s}([q\circ w])$.  Then we say that $v$ and $w$ are \textbf{tangent to order $n$}\index{definitions}{Tangent} at $p$ if and only if the local intersection number
\begin{equation}
(p, v\cdot w; \caS) \geq n+1.
\end{equation}
\end{definition}
Because $|v|$ and $|w|$ are actually tangent to order $n$ at $p$, tangency to order $n$ forms an equivalence relation, which we call $\sim_{n-\tan}$, and we thus recover the projectivized jet space\index{definitions}{Projectivized jet space} \[
\mathbb{P}\caJ^{n}_{p} = \Delta^{s}(p)/\sim_{n-\tan},
\]
at $p$.  In particular,
\[
\mathbb{P}\caJ_{1} =\mathbb{P}T_{p},
\]
the projectivized tangent space \index{definitions}{Projectivized tangent space} to $\caM(\caS)$ at $p$.  As $\caM(\caS)$ is a smooth surface, $\mathbb{P}T_{p}$ is a projective line.
\section{Projective Embeddings and Projective Coordinate Rings}
We start with some basic projective geometry, as in Artin \cite{GeomAlg}.
Let $(X, L)$ be an abstract projective space, given as its set of points $X$ and a set of lines $L$.
\begin{enumerate}
\item A subset $Y \subseteq X$ is \textbf{linearly closed} \index{definitions}{Linear closure, in projective geometry} if for any two points $P, P' \in Y$ the line $\overline{PP'} \subseteq Y$.  The linearly closed sets are closed under intersection.  The \textbf{linear closure} of $Y$ in $X$ is the intersection of all linearly closed spaces which contain $Y$.  As $X$ is linearly closed, the linear closure always exists.  We denote the linear closure of the union of a collection of subsets $V_1, \dots, V_n \subseteq X$ by $\overline{V_1\cdots V_n}$.
\item A point $P \in X$ is said to be \textbf{linearly independent}\index{definitions}{Linear independence} of a subset $Y$ if and only if $P \notin \overline{Y}$.  In particular, we call a set $P_1, \dots, P_n\in X$ \textbf{linearly independent} if for any subset $M \subseteq \{1, \dots, n\}$ and any $k \notin M$ we have $\overline{(P_m)_{m\in M}}\subsetneq \overline{P_k(P_m)_{m\in M}}.$
\item The \textbf{dimension} of $X$\index{definitions}{Dimension!in projective geometry} is the cardinality of a maximal set of linearly independent points minus $1$, and is denoted $\dim X$ \index{notation}{dim@$\dim$}(this is possibly infinite).
%A set of points $V \subseteq X$ is said to be in \textbf{general position} if and only if every subset of cardinality at most $\dim X + 1$ is linearly independent.
\end{enumerate}
Let $\caS$ be a proper geometric set on a function field $F$ of dimension $2$. 
\begin{definition} We say a point $p \in \caP(\caS)$ is \textbf{supported} on a divisor $D \in \caDiv(\caS)$ if and only if $\Delta(p) \cap \supp(D)$ is nonempty\index{definitions}{Support of a set of divisors}\index{notation}{suppD@$\supp(D)$}.  We say a set $X\subseteq \caDiv(\caS)$ \textbf{separate points}\index{definitions}{Separates points} if and only if for any two points $p_1, p_2 \in \caP(\caS)$ there are two divisors $D_1, D_2 \in X$ such that $p_1 \in \supp(D_1), p_1 \notin \supp(D_2)$ and $p_2 \notin \supp(D_1), p_2 \in \supp(D_2)$.  Given a set $S\subseteq \caDiv(\caS)$, we will define its \textbf{support}
\[
\supp(S) = \bigcup_{E \in S} \supp(E).
\]
\end{definition}
\begin{definition}
A point $p\in \caP(\caS)$ is said to be in the \textbf{base locus}\index{definitions}{Base locus} (and is called a \textbf{base point}\index{definitions}{Base point}) of $X\subseteq \caDiv(\caS)$ if $p$ is supported on every element of $X$; a set without base locus is called \textbf{basepoint free}\index{definitions}{Basepoint free}.  
\end{definition}
Separating points is strictly stronger than basepoint free.
\begin{definition}
We say the linear system $|D|$ \textbf{separates tangent lines}\index{definitions}{Separates tangent lines} at $p$ if and only if  for any $\ell \in \mathbb{P}T_p, |D| \cap \ell$  is nonempty.
\end{definition}
\begin{definition}
We call a divisor $D$ \textbf{very ample} if $|D|$ separates points and tangent lines.
\end{definition}

For any divisor $D$ and any effective divisor $E$ there is an injective map
\[
\alpha: |D| \longrightarrow |D+E|
\]
given by adding $E$ to each effective divisor in $|D|$.  This invokes a map in general
\[
\alpha: |D| \times |E|\longrightarrow |D+E|
\]
and in our particular case,
\[
\alpha^n: \Sym^n(|D|)\longrightarrow |nD|
\]
as addition is symmetric.  We will now call a divisor $D$ \textbf{$n$-adequate}\index{definitions}{$n$-Adequate divisor} if the linear closure of $\alpha^n(\Sym^n(|D|))$ in $|nD|$ is all of $|nD|$.  Any very ample divisor is adequate; indeed, a very ample divisor gives relations on the projective coordinate ring, which is generated in the first dimension.

\begin{definition} We define a \textbf{projectivizing datum} \index{definitions}{Projectivizing datum} to be a quadruple $(\caS, D, V, \rho)$ where:
\begin{enumerate}
\item $\caS$ is a proper geometric set.
\item $D$ is a very ample divisor.
\item $V$ is a $\overline{\mathbb{Q}}$-vector space.
\item $\rho: |D|\longrightarrow \mathbb{P}V$ is an isomorphism.
\end{enumerate}
\end{definition}
We know by the fundamental theorem of projective geometry that this $\rho$ is determined up to a semilinear automorphism of $V$, and fixing a $\rho$ rids us of the indeterminacy.  Let $\ProjData(\caS)$ be the collection of projectivizing data for $\caS$.  The outer automorphisms of $G_F$ act on the collection of all projectivizing data, by translation of the corresponding $\caS, D,$ and $|D|$.
\begin{proposition}
Given a projectivizing data $(\caS, D, V, \rho)$, the maps
\[
\Sym^n(|D|)\longrightarrow |nD|
\]
induce a canonical isomorphism
\[
\Sym^{n}(\rho): \mathbb{P}(\Sym^n(V)/I_n) \simeq |nD|
\]
compatible with all $\alpha^{i}$, where $I_n\subset \Sym^n(V)$ is a vector subspace.
\end{proposition}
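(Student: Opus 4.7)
The plan is to use the fact, noted already in the text, that a very ample divisor $D$ is $n$-adequate for every $n\geq 1$, so that $\alpha^n(\Sym^n(|D|))$ has linear closure equal to all of $|nD|$, combined with \autoref{linearsystems}, which identifies $|D|$ and $|nD|$ with the classical complete linear systems $\mathbb{P}H^0(\caM(\caS),\caO(D))$ and $\mathbb{P}H^0(\caM(\caS),\caO(nD))$ on the smooth projective surface $\caM(\caS)$. Under these identifications, $\alpha^n$ is exactly the projectivization of the multiplication-of-sections map $m_n:\Sym^n H^0(\caO(D))\to H^0(\caO(nD))$, and $n$-adequacy is precisely the assertion that $m_n$ is surjective.

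First I would factor $\alpha^n$ through the tautological map $\Sym^n(|D|)\to\mathbb{P}(\Sym^n V)$ given by composing $\Sym^n\rho:\Sym^n(|D|)\simeq\Sym^n(\mathbb{P}V)$ with the decomposable-tensor map $([v_1],\ldots,[v_n])\mapsto [v_1\cdots v_n]$. The isomorphism $\rho$ identifies $V$ with $H^0(\caO(D))$, so $m_n$ transports to a surjective linear map $\bar m_n:\Sym^n V\twoheadrightarrow W_n$, where $W_n$ is the underlying vector space of $|nD|$. Setting $I_n=\ker\bar m_n$ produces the desired projective isomorphism $\Sym^n(\rho):\mathbb{P}(\Sym^n V/I_n)\simeq|nD|$, which on the decomposable locus recovers $\alpha^n$ by construction. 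Compatibility with the full family $\{\alpha^i\}$ follows from the associativity of multiplication of sections: the multiplication maps $\Sym^i V\otimes\Sym^{n-i}V\to\Sym^n V$ descend modulo the various $I_k$ to the projectivized addition maps $|iD|\times|(n-i)D|\to|nD|$.

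The main delicate point will be to verify that $I_n$ is truly intrinsic to the projectivizing datum $(\caS,D,V,\rho)$, with no further geometric input beyond what is already encoded in $\rho$ and $\alpha^n$. The cleanest way is uniqueness: any two linear surjections $\Sym^n V\to W_n$ whose projectivizations agree on the decomposable locus must coincide, since the decomposable tensors $\{v_1\cdots v_n\}$ linearly span $\Sym^n V$. Existence of at least one such surjection is provided by $m_n$ together with $n$-adequacy; uniqueness then pins down $I_n$ unambiguously as the kernel, and the resulting isomorphism $\Sym^n(\rho)$ is canonical in the datum.
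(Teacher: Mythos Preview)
The paper states this proposition without proof, so there is nothing to compare against directly; your argument supplies exactly the natural justification the paper omits, and it is essentially correct. You correctly identify that \autoref{linearsystems} lets you treat $|D|$ and $|nD|$ as the classical projective spaces $\mathbb{P}H^0(\caO(D))$ and $\mathbb{P}H^0(\caO(nD))$, that under this identification $\alpha^n$ is the projectivization of the multiplication map $m_n$, and that the paper's assertion ``any very ample divisor is adequate'' supplies the surjectivity of $m_n$ needed to hit all of $|nD|$. Defining $I_n=\ker \bar m_n$ and invoking associativity of section multiplication for the compatibility with the $\alpha^i$ is exactly the right move.

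One small point worth tightening: in your uniqueness paragraph you assert that two linear surjections $\Sym^n V\to W_n$ agreeing projectively on decomposables must coincide. Strictly, projective agreement on each decomposable $v_1\cdots v_n$ only gives $\phi_1(v_1\cdots v_n)=\lambda(v_1,\dots,v_n)\,\phi_2(v_1\cdots v_n)$ with an \emph{a priori} varying scalar; you need a short additional argument (e.g.\ expand $(v+tw)^n$ and use linear independence of $\phi_2(v^n),\phi_2(w^n)$ in $W_n$ for generic $v,w$) to force $\lambda$ to be constant. Once that constant is absorbed, the kernel $I_n$ is indeed independent of all auxiliary choices, and in particular does not depend on which vector space $W_n$ one picks to linearize $|nD|$. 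With that caveat, your proof is complete and matches what the paper is implicitly relying on.
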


Finally:
\begin{theorem}\label{thm:geometricreconstruction}\index{definitions}{Geometric reconstruction}
A projectivizing datum $(\caS, D, V, \rho)$ gives $\caM(\caS)$ uniquely the structure of a smooth, projective $\overline{\mathbb{Q}}$-variety, so that
\[
\caM(\caS)_{/\overline{\mathbb{Q}}} \simeq \Proj\left(\bigoplus_{n\geq 0} \Sym^n(V)/I_n\right),
\]
which induces an isomorphism
\[
\eta: \operatorname{Frac}(\overline{\mathbb{Q}}(\caM(\caS))) \longrightarrow F,
\]
and a corresponding isomorphism
\[
\eta: G_F\longrightarrow G_{\overline{\mathbb{Q}}(\caM(\caS))}.
\]
which respects inertia and decomposition groups of divisors.
\end{theorem}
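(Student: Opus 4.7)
The plan is to read off a closed projective embedding of $\caM(\caS)$ directly from the datum $(\caS,D,V,\rho)$, exhibit its homogeneous coordinate ring as the graded ring $\bigoplus_n \Sym^n(V)/I_n$ with $I_n$ extracted group-theoretically from the sum maps $\alpha^n$, and then deduce the field-theoretic and Galois-theoretic consequences.

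First, I would construct the embedding $\phi_D\colon \caM(\caS)\hookrightarrow \mathbb{P}V^{\ast}$. Using $\caM(\caS)=\caP^{\geom}(\caS)$ from \autoref{cor:detectpoints}, send each geometric point $p$ to the hyperplane $H_p\subseteq |D|=\mathbb{P}V$ consisting of divisors in $|D|$ supported at $p$; this hyperplane is well-defined because the notion ``point in the support of a divisor'' is group-theoretic via $\Delta(p)\cap\supp(D)$. Very ampleness says $|D|$ separates points and separates tangent lines in the sense of \autoref{localgeometry}, so $\phi_D$ is injective on points and injective on projectivized tangent spaces. Properness of $\caS$ then upgrades $\phi_D$ to a closed immersion.

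Next, I would recover each $I_n\subset\Sym^n V$ from group-theoretic data. The summation map $\alpha^n\colon \Sym^n|D|\to |nD|$ has already been produced group-theoretically, and $|nD|$ carries a canonical projective-space structure via \autoref{linearsystems}; write $|nD|=\mathbb{P}W_n$. The map $\alpha^n$ factors (on cones over sections) as a linear map $\Sym^n V\to W_n$ well-defined up to scalar; $n$-adequacy of the very ample $D$ ensures this map is surjective, so setting $I_n$ to be its kernel yields $\mathbb{P}(\Sym^n V/I_n)=|nD|$ canonically. Assembling these gives $R=\bigoplus_n \Sym^n(V)/I_n$, a finitely generated graded $\overline{\mathbb{Q}}$-domain. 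Standard projective geometry identifies $\Proj R$ with the scheme-theoretic image of $\phi_D$ in $\mathbb{P}V^{\ast}$, producing a canonical iso $\caM(\caS)\simeq \Proj R$ endowing $\caM(\caS)$ uniquely with a smooth projective $\overline{\mathbb{Q}}$-structure. The function field iso $\eta\colon \overline{\mathbb{Q}}(\Proj R)\to F$ is obtained as ratios of equal-degree homogeneous elements of $R$, matched to $F$ via the identification of linear systems in \autoref{linearsystems}; this yields $\eta\colon G_F\to G_{\overline{\mathbb{Q}}(\caM(\caS))}$, and since the correspondence sends each $v\in\caS$ to the prime divisor on $\Proj R$ with the same underlying valuation and the same local intersection numbers (\autoref{thm:intersectionproduct}), inertia and decomposition groups are preserved on the nose.

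The main obstacle is step two, the compatibility between the group-theoretic sum map $\alpha^n$ and the classical multiplication-of-sections map $\Sym^n H^0(\caM(\caS),\mathcal{O}(D))\to H^0(\caM(\caS),\mathcal{O}(nD))$. One must verify that what \autoref{linearsystems} produces as the linear family through $D_1+\cdots+D_n$ coincides, as an element of $|nD|$, with the algebraic divisor of zeros of the product of sections defining $D_1,\dots,D_n$; this reduces to a careful bookkeeping of local multiplicities, which is exactly the content of the local intersection number theory of \autoref{globaltheoryI} and the tangent-cone analysis of \autoref{localgeometry}. Once this compatibility is pinned down, the equality $I_n^{\mathrm{group}}=I_n^{\mathrm{geom}}$ is immediate, the identification $\Proj R\simeq\caM(\caS)$ is forced, and the remaining assertions about $\eta$ follow formally from the functoriality of $\Proj$ and the preservation of divisors.
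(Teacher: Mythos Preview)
The paper does not give an explicit proof of this theorem; it is stated as the culmination of the section, immediately after the proposition identifying $\mathbb{P}(\Sym^n(V)/I_n)\simeq |nD|$, and is evidently meant to follow from the machinery assembled in \autoref{globaltheoryI}, \autoref{localgeometry}, and the projective-geometry section. Your proposal is a faithful unpacking of exactly that machinery: you use \autoref{cor:detectpoints} to identify closed points, very ampleness (separation of points and tangent lines) to get a closed immersion into $\mathbb{P}V^{\ast}$, the preceding proposition to obtain $I_n$ and hence the homogeneous coordinate ring, and \autoref{linearsystems} together with \autoref{thm:intersectionproduct} to check that the group-theoretic addition map $\alpha^n$ matches multiplication of sections. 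This is the intended argument, and your identification of the compatibility $\alpha^n \leftrightarrow$ (multiplication of sections) as the one nontrivial verification is on target; the paper absorbs this into the assertion that group-theoretical linear equivalence and complete linear systems coincide with the classical ones under $\mu$.
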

\section{The Proof of the Birational Anabelian Theorem For Surfaces}
We can now apply the theory developed above to prove \autoref{BirationalAnabelianTheoremForSurfaces}.
\label{pf:BiratAnabThm}
\begin{proof}[Proof of \autoref{BirationalAnabelianTheoremForSurfaces}]
Let $F$ be a field, finitely generated over $\overline{\mathbb{Q}}$ and of transcendence degree $2$.  There is a canonical, injective map
\begin{equation}
\varphi: \Aut(F)\longrightarrow \Out_{\cont}(G_{F}),
\end{equation}
as any automorphism of $F$ which fixes each of its prime divisors must be the trivial automorphism. We construct the inverse
\begin{equation}
\psi: \Out_{\cont}(G_{F})\rightarrow \Aut(F).
\end{equation}

Choose a projectivizing datum $(\caS, D, V, \rho)$ fixed by no non-trivial automorphisms of $F$.  By \autoref{thm:geometricreconstruction}, we have
\begin{equation}
\eta: \overline{\mathbb{Q}}[\caM(\caS) \setminus D]\rightarrow F
\end{equation}
which gives an injection from a finitely generated ring to its field of fractions; the automorphisms of $F$ then act simply transitively on this set, as $(\caS, D, V, \rho)$ is fixed by no non-trivial automorphisms of $F$; however, as $(\caS, D, V, \rho)$ are determined by group theory, $\Out_{\cont}(G_{F})$ acts on this set and this gives our section $\psi$.

We must now prove that every continuous outer automorphism $\zeta \in \Out_{\cont}(G_{F})$ for which $\psi(\zeta) = e$ is an inner automorphism.  Choose $\zeta'$ to be a genuine continuous  automorphism in the class of $\zeta$.

Let $\{L_n\}$ be a sequence of finite, Galois extensions of $F$ and \[\Gamma_n = \Gal(\overline{F}|L_n)\] which satisfy the following properties:
\begin{enumerate}
\item $\zeta(\Gamma_n) = \Gamma_n$.
\item $\bigcap_n \Gamma_n = \{e\}.$
\end{enumerate}
That such a filtration exists comes from the fact that there are only finitely many translates of any finite-index, closed subgroup of $G_{F}$, which follows from the fact that there is a group-theoretic recipe to detect ramification information, and that there are only finitely many covers of a given degree with prescribed ramification (which in turn follows from the fact that geometric fundamental groups in question are finitely-presented).  But we can reconstruct $L_n$ from $\Gamma_n$, so any class of $\zeta$ then gives us an action of $\mu$ on $\bigcup_n L_n = \overline{F}$ trivial on $F$, which shows that its action on $\Gamma_{n}$ is induced by conjugation by an element of $G_{F}/\Gamma_{n}$ and as 
\begin{equation}
G_{F} = \varprojlim G_{F}/\Gamma_{n},
\end{equation}
 $\zeta$ is inner, and $\psi$ is an isomorphism.
\end{proof}
\section{Application: Grothendieck-Teichm\"uller Theory}
\label{sec:GT}
Let $\caM_{0,5}$ be the moduli space of genus $0$ curves with $5$ distinct, marked, ordered points over $\overline{\mathbb{Q}}$.  There is a natural isomorphism
\begin{equation}
i: \caM_{0,5}\rightarrow \mathbb{P}^{2} \setminus \caL
\end{equation}
where, if $\mathbb{P}^{2}$ is given projective coordinates $X, Y, Z$, $\caL$ is the union of the six lines given by homogeneous equations $L_{X}: X = 0, L_{Y}: Y = 0, L_{Z}: Z=0, L_{XY}: X = Y, L_{XZ}: X = Z, L_{YZ}: Y = Z$.  $S^{3}$ acts on $\caM_{0,5}$ by permuting these coordinates.
Harbater and Schneps defined a subgroup of $\Out(\poe(\caM_{0,5}))$
\begin{equation}
\Out^{\#}_{5} = \left\{\alpha \in \Out(\poe(\caM_{0,5}))\,\left| \begin{array}{l}\alpha\text{ commutes with the action of }S_{3} \\ \\ \alpha\text{ sends generators of inertia subgroups of each }L_{X}\text{, etc.,} \\ \text{to generators of possibly conjugate inertia subgroups} \end{array}\right. \right\} .
\end{equation}
A special case of the Main Theorem of \cite{HarbaterSchneps} is that there is an isomorphism
\begin{equation}
\eta': \widehat{GT} \rightarrow \Out^{\#}_{5}.
\end{equation}
The $\eta$ of Formula \ref{etadefinition} is the composition of $\eta'$ with the inclusion of $\Out^{\#}_{5}$ into $\Out(\poe(\caM_{0,5}))$.
To prove \autoref{LiftingCondition} we need first a lemma about visible affine opens.
\begin{lemma}\label{visaffopengeomsets} Let $\caS_{1}$ and $\caS_{2}$ be visible affine geometric sets with \begin{equation}\ker \rho_{\emptyset\caS_{1}} = \ker\rho_{\emptyset\caS_{2}}.\end{equation}
Then \begin{equation}\caS_{1} = \caS_{2}.\end{equation}
\end{lemma}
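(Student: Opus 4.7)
The plan is a contradiction argument using the visible-affine structure on both sides. Write $U_i := \caM(\caS_i)$, which is a visible affine of $F$ by hypothesis. The condition $\ker\rho_{\emptyset\caS_1} = \ker\rho_{\emptyset\caS_2}$ identifies $\Pi_{\caS_1}$ and $\Pi_{\caS_2}$ canonically as the same quotient of $G_F$, and by property (3) of \autoref{thm:maximalsmoothmodels} induces a canonical identification $\poe(U_1) = \poe(U_2)$. Suppose, for contradiction, that $v \in \caS_1\setminus \caS_2$. Then $T_v \subseteq \ker\rho_{\emptyset\caS_1} = \ker\rho_{\emptyset\caS_2}$, so the image of $T_v$ in $\poe(U_2)$ is trivial.

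I would next analyze this vanishing by choosing a smooth model $W$ dominating both $U_2$ and a model on which $|v|$ appears as a prime divisor, and by applying Zariski--Nagata purity together with the geometric description of inertia in \autoref{thm:geometrictheoryofdecompositionandinertia}. Trivial image in $\poe(U_2)$ rules out $|v|$ being either a prime divisor on $U_2$ (which would place $v \in \caS_2$) or a boundary divisor of a smooth compactification of $U_2$ (whose meridian would survive nontrivially in $\poe(U_2)$); the only remaining possibility is that $|v|$ is contracted by $W \to U_2$ to a single interior point $p_2 \in U_2$. Because every exceptional divisor of a birational morphism of smooth surfaces in characteristic zero is rational, $\overline{|v|} \simeq \mathbb{P}^1$. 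Now the visible-affine structure of $\caS_1$ places $v$ in $\caH_1$ or $\caV_1$ via the horizontal-vertical decomposition of \autoref{VisibleAffine}. If $v \in \caV_1$, then $|v|$ is a smooth fiber of $\pi_1\colon U_1 \to B_1$; by Homeomorphicity of Fibers its genus equals the common fiber genus, which therefore must be $0$. The blow-ups on $|v|$ required to realize the contraction to $p_2$ then induce on $U_2$ a fiber over $\pi_1(|v|)$ with at most one puncture, contradicting Hyperbolicity of Base for any visible-affine structure on $U_2$ (via Inheritance applied to $\caS_2$). If instead $v \in \caH_1$, then $\pi_1|_{|v|}$ extends to a non-constant morphism $\overline{|v|} = \mathbb{P}^1 \to \overline{B_1}$; Riemann--Hurwitz forces $\overline{B_1} = \mathbb{P}^1$, whence $B_1 = \mathbb{P}^1 \setminus \{q_1, \dots, q_k\}$ with $k \geq 3$, and combining the resulting preimage information with Numerical Equivalence of Fibers yields a numerical incompatibility between $|v|$'s intersection profile against vertical fibers in $\caV_1$ and its required behaviour as a $(-1)$-curve in the resolution to $U_2$.

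The principal obstacle is the careful treatment of the rational cases, namely vertical genus-$0$ fibrations and horizontal divisors over a rational base, where a direct genus obstruction is unavailable and the contradiction must be extracted by combining several of the visible-affine axioms from \autoref{VisibleAffine} --- especially Hyperbolicity of Base, Numerical Equivalence of Fibers, and Inheritance --- in order to rule out the Cremona-type birational modifications that could otherwise convert $U_1$ into $U_2$ while preserving the common quotient $\Pi$. Once these cases are handled, by the symmetry of the hypothesis the same argument gives $\caS_2 \setminus \caS_1 = \emptyset$, and we conclude $\caS_1 = \caS_2$.
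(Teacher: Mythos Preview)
Your setup is sound, and you correctly isolate the one nontrivial point: what happens when $v \in \caS_1 \setminus \caS_2$ has its center on $U_2$ equal to an interior point $p_2$. The paper's proof is much shorter because it packages this into a single assertion --- for a visible affine geometric set $\caS$, one has $\rho_{\emptyset\caS}(T_v)$ nontrivial if and only if $v \notin \caS$ --- and then the lemma follows at once by symmetry (together with the remark that removing a divisor from a visible affine strictly enlarges its fundamental group). Your case analysis is, in effect, an attempt to verify the hard direction of this biconditional, so the approach is legitimate but far more elaborate than what the paper does.

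The gap is in your resolution of the interior-point case. In the vertical sub-case you speak of ``a fiber over $\pi_1(|v|)$'' on $U_2$ and then invoke Hyperbolicity of Base for $\caS_2$, but $\pi_1$ is the fibration on $U_1$; the decomposition $\caS_2 = \caH_2 \cup \caV_2$ need have nothing to do with $\pi_1$, so the axiom you cite has no purchase. In the horizontal sub-case the Riemann--Hurwitz step is fine, but ``numerical incompatibility'' is a placeholder rather than an argument: you never say which intersection numbers collide, nor why a multisection of $\pi_1$ with rational normalization cannot appear in the exceptional locus of $W\to U_2$. Both sub-cases are in fact disposed of by a single observation you overlook: a visible affine contains no complete curves, since the base $B$ is affine and every fiber is affine, so any complete curve would map to a point of $B$ and then sit as a complete subvariety of an affine fiber. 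Consequently, if $v$ were centered at an interior point $p_2\in U_2$, resolving $U_2\dashrightarrow U_1$ by blow-ups over $p_2$ would produce a proper curve in the (proper) exceptional fiber mapping birationally onto $|v|\subset U_1$, forcing $|v|$ itself to be complete --- a contradiction. This one sentence replaces your entire treatment of case (c) and is what underlies the paper's first line.
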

\begin{proof}
We note that a divisor $v$ is not centered on $\caS_{i}$ if and only if $\rho_{\emptyset\caS_{i}}(T_{v})$ is nontrivial, so under our assumptions $v$ is not centered on $\caS_{1}$ if and only if it is not centered on $\caS_{2}$. Let \begin{equation}\caS_{1} = \caS_{2} \cup \{v_{i}\}_{i \in V} \setminus \{w_{i}\}_{i \in W},\end{equation}
for some (possibly empty) finite, disjoint sets $V$ and $W$. If $W$ were nonempty, then each $w_{i}$ would be  uncentered on $\caS_{1}$ so $\rho_{\emptyset\caS_{1}}(T_{w_{2}})$ must be nontrivial, contradicting that $\ker \rho_{\emptyset\caS_{1}} = \ker\rho_{\emptyset\caS_{2}}$.  As any divisor complement of a visible affine adds generators to its fundamental group, $\caS_{1} = \caS_{2}$.
\end{proof}
\begin{proof}[Proof of \autoref{LiftingCondition}]
$\alpha$ as given in the hypotheses of the theorem satisfies the defining conditions of $\Out^{\#}_{5}$, by the Main Theorem of Harbater and Schneps.  Let $\alpha$ also satisfy the lifting condition of the theorem.  Then by the birational anabelian theorem for surfaces, $\tilde{\alpha} = \varphi(\beta)$ for some $\beta \in \Aut(\overline{\mathbb{Q}}(x,y))$.  If $\tilde{\alpha}$ preserves the kernel of $\gamma_{*}$, by \autoref{visaffopengeomsets}, $\tilde{\alpha}$ must preserve the unique geometric set $\caS$ which satisfies:
\begin{enumerate}
\item $\langle T_{v}\rangle_{v\in \caS} = \ker \gamma_{*}$.
\item $\caM(\caS)$ is isomorphic to $\caM_{0,5}$.
\end{enumerate}
This implies that $\tilde{\alpha}$ is such an automorphism of the affine $\caM_{0,5/\overline{\mathbb{Q}}}$, considered as a $\mathbb{Z}$-scheme (so that $\tilde{\alpha}$ could come from $G_{\mathbb{Q}}$ for example).  But 
\begin{equation}
\Aut(\caM_{0,5}) \simeq G_{\mathbb{Q}}\times S_{5},
\end{equation}
so the centralizer of $S_{5}$ is $G_{\mathbb{Q}}$, and $\alpha \in \Im \eta \circ \rho$.
\end{proof}

\section{Application: Absolute Galois Groups of Number Fields are Geometric Outer Automorphism Groups}\label{sec:AutomorphismGroups}
If a field $F$ satisfies the hypotheses of \autoref{outerauts}, then $\Aut(F) = G_{k}$, so \autoref{outerauts} follows immediately from \autoref{BirationalAnabelianTheoremForSurfaces}.

We can write down explicit examples of fields $F$ which satisfy the hypotheses of Theorem \ref{outerauts}.
We recall the following consequence of the main theorem from \cite{TrivAut}:
\begin{proposition}
Consider the affine curve
\[
C_\alpha: x^5 + y^{4} + \alpha xy + x = 0.
\]
Then for all but finitely many $\alpha \in \overline{\mathbb{Q}}$, this curve is nonsingular, hyperbolic, and has trivial automorphisms over its field of definition, which is $\mathbb{Q}(\alpha)$.
\end{proposition}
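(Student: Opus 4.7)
The plan is to verify the three conditions — smoothness, hyperbolicity, and triviality of the automorphism group over the field of definition — separately, and show that each fails on at most a proper Zariski-closed (hence finite) subset of the $\alpha$-line $\mathbb{A}^{1}_{\overline{\mathbb{Q}}}$.

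For smoothness, write $F(x,y) = x^{5} + y^{4} + \alpha xy + x$ and form the ideal $\langle F, F_{x}, F_{y}\rangle = \langle F,\, 5x^{4}+\alpha y+1,\, 4y^{3}+\alpha x\rangle$. Eliminating $x,y$ by successive resultants produces a single polynomial $\Delta(\alpha)\in\mathbb{Z}[\alpha]$ whose vanishing detects an affine singularity. A check at any single value (say $\alpha=0$, where the curve $x^{5}+y^{4}+x=0$ is readily seen to be nonsingular in the affine chart) shows $\Delta\not\equiv 0$, so the affine curve is smooth away from the finitely many zeroes of $\Delta$. Analyzing the projective closure on the line at infinity gives finitely many further bad $\alpha$; removing these ensures a smooth projective model $\overline{C}_{\alpha}$.

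For hyperbolicity, the genus of $\overline{C}_{\alpha}$ is computed from the degree $5$ plane projective model (minus the contribution of the point(s) at infinity, which are resolved once and for all independent of $\alpha$ for generic $\alpha$). The generic genus is $\geq 2$, in fact $= 6$ in the smooth plane case, so hyperbolicity follows at once from smoothness for generic $\alpha$.

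The hard step is triviality of $\Aut(\overline{C}_{\alpha})$ over $\mathbb{Q}(\alpha)$. By Hurwitz's bound, $|\Aut(\overline{C}_{\alpha})|\leq 84(g-1)$ is uniformly bounded in the family. The locus of $\alpha$ admitting a nontrivial automorphism is therefore cut out, after passing to a finite cover, by a constructible condition in $\alpha$; to conclude that it is a finite set it suffices to exhibit \emph{one} specialization $\alpha_{0}\in\overline{\mathbb{Q}}$ where $\Aut(\overline{C}_{\alpha_{0}})=\{e\}$ over $\mathbb{Q}(\alpha_{0})$. This exhibition is the content of \cite{TrivAut}: one shows that a putative nontrivial automorphism would permute the (finitely many) Weierstrass points and a few canonically defined lines, forcing an algebraic relation on $\alpha$ that fails generically. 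Combining the open nature of the trivial-automorphism condition with the single verified $\alpha_{0}$, the complement is a proper constructible subset of $\mathbb{A}^{1}_{\overline{\mathbb{Q}}}$, hence finite; descending the automorphism functor from $\overline{\mathbb{Q}}$ to $\mathbb{Q}(\alpha)$ is automatic since triviality is preserved under any field extension. The main obstacle is precisely the exhibition of a single curve $\overline{C}_{\alpha_{0}}$ with trivial automorphism group, which requires the explicit geometric analysis of \cite{TrivAut}.
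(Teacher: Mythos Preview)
The paper does not prove this proposition; it simply records it as ``a consequence of the main theorem from \cite{TrivAut}'' and moves on. There is thus no argument in the paper to compare yours against, and your sketch already supplies more detail than the text does.

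Your treatment of smoothness and hyperbolicity is fine. In fact the projective closure is smooth at the unique point $[0:1:0]$ at infinity for every $\alpha$: in the chart $y=1$ the equation reads $x^{5}+z+\alpha xz^{3}+xz^{4}=0$, whose $z$-partial at the origin is $1$. So for $\alpha\notin Z(\Delta)$ one obtains a smooth plane quintic of genus $6$, and no separate analysis at infinity is needed.

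The automorphism paragraph is slightly confused about its own logic. You first set up a spreading-out argument --- the $\Aut$-scheme of the smooth family is finite over the base, so the locus with fibre length $\geq 2$ is closed, and a single $\alpha_{0}$ with $\Aut(\overline{C}_{\alpha_{0}})=\{e\}$ forces that locus to be proper, hence finite. But you then describe \cite{TrivAut} as ``forcing an algebraic relation on $\alpha$ that fails generically,'' which is already a direct proof that the bad locus is a proper closed subset and renders the single-specialisation step superfluous. Either route is valid; just commit to one.

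One genuine omission: the clause ``which is $\mathbb{Q}(\alpha)$'' asserts that $\mathbb{Q}(\alpha)$ is the \emph{minimal} field of definition of $\overline{C}_{\alpha}$, and the application immediately following the proposition relies on exactly this. Your sketch does not address it. What is required is that the moduli map $\alpha\mapsto[\overline{C}_{\alpha}]\in M_{6}(\overline{\mathbb{Q}})$ be generically injective, so that for $\sigma\in G_{\mathbb{Q}}$ one has $\overline{C}_{\alpha}\simeq\overline{C}_{\sigma(\alpha)}$ only when $\sigma(\alpha)=\alpha$. This too is part of what \cite{TrivAut} furnishes, but it is a separate claim from triviality of the automorphism group and should be acknowledged.
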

Let $C$ and $C'$ be two non-isomorphic, complete, hyperbolic curves over $\overline{\mathbb{Q}}$, with no $\overline{\mathbb{Q}}$-automorphisms, so that the compositum of their minimal fields of definition is $k$.  $C\times C'$ is the image of any variety birational to it under the canonical map (induced by the canonical bundle), so is a birational invariant.   As $C\times C'$ has no automorphisms over $\overline{\mathbb{Q}}$, there are no automorphisms of $\overline{\mathbb{Q}}(C\times C')$ over $\overline{\mathbb{Q}}$; otherwise they would have to act on the image of the canonical map.  Let $k = \mathbb{Q}(\alpha)$; for any number field $k$, there are infinitely many such $\alpha$, by Steinitz's theorem.  Then, the field $F = \overline{\mathbb{Q}}(C_1\times C_\alpha)$ satisfies the hypotheses of Theorem \ref{outerauts}, so \[\Out(G_F) \simeq G_k.\]

\section{Acknowledgements}
We would like to acknowledge Adam Topaz, Tam\'as Szamuely, Thomas Koberda, Glenn Stevens, Jay Pottharst, Dick Gross, Sinan \"Unver, Barry Mazur, Benson Farb, Anand Patel, David Massey, Mark Goresky, and K\"ur\c{s}at Aker for helpful conversations.  The influence of the ideas of Florian Pop on this paper --- as well as his personal encouragement of the author --- cannot be overstated.  He is also responsible for the terminology of ``birational'' versus ``geometric'' reconstruction.  Sacha Lochak provided invaluable comments on a draft of the paper, both typographical and substantial. We acknowledge also the hospitality of the Hausdorff Institute for Mathematics, the B\u{a}leanu family, and the Suboti\'{c} family while this paper was written.  The author was supported by a DoD NDSEG Fellowship and an NSF Graduate Fellowship. 
\bibliography{bibliography}
\end{document}